
\documentclass[11pt]{amsart}

\setlength{\textwidth}{6.0in} \setlength{\oddsidemargin}{0.25in}
\setlength{\evensidemargin}{0.25in}

\usepackage{amsthm}
\usepackage{amsmath}
\usepackage{amsfonts}
\usepackage{amssymb}
\usepackage{ulem}

\usepackage{graphpap}

\theoremstyle{plain}
\newtheorem{theorem}{\textbf{Theorem}}[section]
\newtheorem{lemma}[theorem]{\textbf{Lemma}}

\newtheorem{claim}{\textbf{Claim}}

\newtheorem*{definition}{\textbf{Definition}}

\newtheorem*{Freiman-3k-4}{\textbf{Freiman $3k-4$ Theorem}}
\newtheorem*{Freiman-3k-3}{\textbf{Freiman $3k-3$ Theorem}}

\newcommand{\Sum}[2]{\underset{#1}{\overset{#2}{\sum}}}

\newcommand{\Z}{\mathbb{Z}}

\newcommand{\be}{\begin{equation}}
\newcommand{\ee}{\end{equation}}
\newcommand{\Summ}[1]{\underset{#1}{\sum}}

\newcommand{\R}{\mathbb{R}}
\newcommand{\ol}[1]{\overline{#1}}
\newcommand{\ber}{\begin{eqnarray}}
\newcommand{\eer}{\end{eqnarray}}
\newcommand{\nn}{\nonumber}
\newcommand{\und}{\;\mbox{ and }\;}

\begin{document}

\title{Inverse Additive Problems for Minkowski Sumsets I}
\author{G.~A.~Freiman} 
\address{The Raymond and Beverly Sackler Faculty of Exact Sciences School of
Mathematical Sciences, Tel Aviv University.}
\email{grisha@post.tau.ac.il}
\author{D.~Grynkiewicz}
\address{Institut f\"{u}r Mathematik und Wissenschaftliches Rechnen. Karl-Franzens-Universit\"{a}t, Graz.}
\email{diambri@hotmail.com}
\thanks{Supported by FWF Grant M1014-N13 }
\author{O. Serra} 
\address{Departament de Matem\`atica Aplicada IV,
        Universitat Polit\`ecnica de Catalunya.}
\email{oserra@ma4.upc.edu}
\thanks{Supported by the  Spanish Research Council    MTM2008-06620-C03-01 and the
Catalan Research Council  2009SGR01387.}%
\author{Y.~V.~Stanchescu}
\address{The Open University of Israel, Raanana 43107 and Afeka Academic College, Tel Aviv 69107.}
\email{ionut@openu.ac.il and yonis@afeka.ac.il}
\thanks{The research of Y.~Stanchescu was supported by the Open University of Israel's Research Fund, Grant No. 100937}

\date{draft, 24 August 2010}

\begin{abstract} We give the structure  of discrete two-dimensional finite sets $A,\,B\subseteq \R^2$
which are extremal for the recently obtained inequality $|A+B|\ge
(\frac{|A|}{m}+\frac{|B|}{n}-1)(m+n-1)$, where $m$ and $n$ are the
minimum number of parallel lines covering $A$ and $B$
respectively.  Via compression techniques, the above bound also
holds when $m$ is the maximal number of points of $A$ contained in
one of the parallel lines covering $A$ and $n$ is the maximal
number of points of $B$ contained in one of the parallel lines
covering $B$. When $m,\,n\geq 2$, we are able to characterize the
case of equality in this bound as well. We also give the structure
of extremal sets in the plane for the projection version of
Bonnesen's sharpening of the Brunn-Minkowski inequality: $\mu
(A+B)\ge (\mu(A)/m+\mu(B)/n)(m+n)$, where $m$ and $n$ are the
lengths of the projections of $A$ and $B$ onto a line.
\end{abstract}

\maketitle

\section{Introduction}

{The Minkowski sum $A+B$, or simply sumset, of two subsets $A$ and
$B$ of an abelian group $G$ is the set of sums $\{ a+b:\; a\in A,\,
b\in B\}$. The classical Brunn--Minkowski inequality gives  a lower
bound for the volume  of the sumset of two convex bodies in $\R^d$
in terms of the volume of the summands. This seminal result in
convex geometry has seen far reaching extensions and applications;
see e.g. \cite{Gardner-survey} for a comprehensive survey. Lower
bounds for the cardinality of the sumset of finite subsets in the
$d$--dimensional Euclidean space have also been given by Freiman
\cite{freiman}, Rusza \cite{ruzsa}, Green and Tao \cite{greentao}, and
Gardner and Gronchi \cite{Gardner-gronchi}.}

{It is known that equality in the Brunn--Minkowski Theorem
holds if and only if the two sets are homothetic. Bonnesen
\cite{bonnesen-original} gave a strengthening of the
Brunn--Minkowski Theorem which takes into account the $(d-1)$-dimensional volume
of the projections of the two sets onto
 a given
hyperplane; see e.g. \cite{bonnesen-book}. For the two-dimensional
case, a discrete analog of the inequality of Bonnesen was obtained
in \cite{gs}, which also improves the previously known discrete lower bounds.
The purpose of this paper is to obtain inverse results which characterize the extremal
sets for both the continuous and
the discrete cases of these inequalities. The characterization is
particularly neat in the two-dimensional case, which is the topic of
this paper. The general $d$-dimensional case is addressed in a
forthcoming paper \cite{fgss_Ddim}.}

Let $A\subseteq \R^2$ be a finite two-dimensional set (i.e., not
contained in a line). It is well-known (see \cite{freiman}) that
$$
|2A|\ge 3|A|-3,
$$
where $|X|$ denotes the cardinality of a finite set $X$.
More generally, for each integer $m \ge 1$, there is an integer $k_0 (m)$
such that, if $|A|\ge k_0 (m)$ and
\begin{equation}\label{eq:dual}
|2A|<\left( 4-\frac{2}{m+1}\right)|A|-(2m+1),
\end{equation}
then $A$ can be covered by at most $m$ parallel lines. For $m=1, 2$, this follows
by results of Freiman \cite{freiman}, and for $m\ge 3$, by results of  Stanchescu
\cite{stanchescu} (see also \cite{stanchescu-2}). The case of addition of two
different sets was considered by Grynkiewicz and Serra \cite{gs}
where they obtained the inequality
\begin{equation}\label{eq:tightdiscrete}
|A+B|\ge \left(\frac{|A|}{m}+\frac{|B|}{n}-1\right)(m+n-1),
\end{equation}
where $A,B \subseteq \R^2$ are finite subsets and $m$ and $n$ are the number of  lines
parallel to some given line $\ell$ which cover $A$ and $B$
respectively.

Our first result, Theorem \ref{thm:invdisc} in Section
\ref{sec:discrete}, shows that equality holds in
\eqref{eq:tightdiscrete} for two-dimensional sets if and only if $A$ and $B$ are both
trapezoids with parallel sides parallel to $\ell$ and
having corresponding pairs of sides between $A$ and $B$ also parallel,
allowing sides consisting of a single point so that our definition of trapezoid also includes triangles.

By basic compression techniques (replicated in the proof of
Theorem \ref{thm-discrete-II}), it can be shown that \eqref{eq:tightdiscrete}
also holds when $m$ is the maximum number of points of $A$
contained on a line parallel to $\ell$ and $n$ is the maximum number of
points of $B$ contained on a line parallel to $\ell$.
Our second result, Theorem \ref{thm-discrete-II} in
Section \ref{sec:disreteII}, characterizes the  more
complicated cases of equality in this alternative bound
for $m,\,n\geq 2$. When $m=1$ or $n=1$, we give an example
showing the structure in these cases can
be much wilder.

Let $A$ and $B$ be two convex two--dimensional bodies in $\R^2$.
The Brunn--Minkowski Theorem implies that
\begin{equation}\label{eq:bmi}
|A+B|\ge \left(|A|^{1/2}+|B|^{1/2}\right)^2,
\end{equation}
where $|X|$ now stands for the area of a measurable set $X$ in
$\R^2$. Moreover, equality holds in (\ref{eq:bmi}) if and only if
$A$ and $B$ are homothetic.
A sharpening of the above Brunn--Minkowski inequality  was obtained
by Bonnesen in 1929 (see e.g. \cite{bonnesen-book}) and reproved in
\cite{gs}:
\begin{equation}\label{eq:bm2i}
|A+B|\ge \left( \frac{|A|}{m}+\frac{|B|}{n}\right) (m+n),
\end{equation}
where $m=|\pi (A)|$ and $n=|\pi (B)|$ are the lengths of the
projections of $A$ and $B$ onto the first coordinate.

Observe that
\begin{equation}\label{eq:imply}\nn
\left( \frac{|A|}{m}+\frac{|B|}{n}\right)
(m+n)=\left(|A|^{\frac{1}{2}}+|B|^{\frac{1}{2}}\right)^2+
\left((\frac{n}{m}|A|)^{\frac{1}{2}}-(\frac{m}{n}|B|)^{\frac{1}{2}}\right)^2\ge
\left(|A|^{\frac{1}{2}}+|B|^{\frac{1}{2}}\right)^2,
\end{equation}
so that the lower bounds (\ref{eq:bmi}) and (\ref{eq:bm2i})
coincide when $|A|/m^2=|B|/n^2$. In this case, we know that
equality holds if and only if $A$ and $B$ are homothetic. On the
other hand, if $|A|/m^2\neq |B|/n^2$, then $A$ and $B$ are not
homothetic, and (\ref{eq:bm2i}) is strictly better than
(\ref{eq:bmi}).  One may ask in this case about the structure of
the sets $A$ and $B$ for which equality holds.  Our third result,
Theorem \ref{thm:invcont-general} in Section \ref{sec:continuous},
shows that equality holds in \eqref{eq:bm2i} if and only if  the
two sets are obtained from a pair of homothetic convex bodies by
stretching them along the vertical line (see the appropriate
definitions in Section \ref{sec:continuous}).

We should remark that the bound \eqref{eq:bm2i}
was originally proven when
$$m=\sup\{|(x+\ell)\cap A|\mid x\in \R^2\}\und n=\sup\{|(x+\ell)\cap B|\mid x\in \R^2\},$$
where $\ell$ is a line parallel to the horizontal axis.
Standard compression or symmetrization techniques are then used to derive
\eqref{eq:bm2i} from this alternative bound,
though it can also be derived independently (via the techniques from \cite{fgss_Ddim,gs}).
The characterization of equality in this fourth bound is addressed in the
forthcoming paper \cite{fgss_Ddim} since the argument is more closely
related to the techniques used for the $d$-dimensional case of equality in
\eqref{eq:tightdiscrete}, which is the main subject of \cite{fgss_Ddim}.
Surprisingly,
the measure-theoretic extremal structures for this original Bonnesen
bound are very well-behaved and exhibit none of the wilder behavior
that is possible for its discrete analog (see Section \ref{sec:disreteII}
and \cite{fgss_Ddim}). It is even more surprising, given that the
discrepancy between the discrete and measure-theoretic extremal structures for
\eqref{eq:tightdiscrete} and \eqref{eq:bm2i} is much milder
(see Sections \ref{sec:discrete} and \ref{sec:continuous}).

\section{The Discrete Case I}\label{sec:discrete}

Let $A\subseteq \R^2$ be a finite set contained in
exactly $m \ge 1$ parallel lines. Then inequality \eqref{eq:dual}
can be rephrased by saying that
\begin{equation}\label{eq:tight}
|2A|\ge \left(4-\frac{2}{m}\right)|A|-(2m-1)=\left(2\frac{|A|}{m}-1\right)(2m-1).
\end{equation}
By choosing the $m$ parallel lines to be $\ell_i:=\{ (x,y)\mid
x=i,\,y\in\R\}$, for $i=0,1,\ldots,m-1$, and letting each
$A_i:=A\cap \ell_i$ be an arithmetic progression with first term
$(i,0)$, difference $d=(0,1)\in \Z^2$ and length $|A|/m\in \Z$,
one can check that inequality (\ref{eq:tight}) becomes tight.

As mentioned in the introduction, the following extension of the
lower bound (\ref{eq:tight}) was given in \cite{gs}. Let $A$ and
$B$ be finite, nonempty sets in $\R^2$. Suppose that, for some
line $\ell$, $A$ and $B$ are covered by exactly $m$ and $n$ lines
parallel to $\ell$ respectively. Then
\begin{equation}\label{eq:tightdiscretebis}
|A+B|\ge \left(\frac{|A|}{m}+\frac{|B|}{n}-1\right)(m+n-1).
\end{equation}
Note that if $A=B$, then $m=n$ and we get inequality
(\ref{eq:tight}).

To state our characterization in Theorem \ref{thm:invdisc}, we need the following
notation
for describing trapezoids and triangles under a common umbrella.
%

\begin{definition} Let $m\geq 1$, $h\geq 1$ be integers and
let $c,\,d\in \R$ be real numbers with $c-d\in \Z$ and
$h-1+(m-1)c\geq (m-1)d$. A {\rm standard trapezoid}
$T(m,h,c,d)$ in $\R^2$ is a translate of the bounded set
$T\subseteq \langle (0,1),(1,d)\rangle\cong \Z^2$
defined by the inequalities $x\geq 0$, $x\leq m-1$, $y\leq cx+h-1$ and $y\geq dx$.
\end{definition}

\begin{figure}[ht]\label{standard-trap-diagram}
\setlength{\unitlength}{4mm}
\begin{center}
\begin{picture}(6,18)

\put(0,0){\line(0,1){18}}
\put(0,18){\line(1,-1){5}}
\put(5,13){\line(0,-1){3}}
\put(0,0){\line(1,2){5}}
\put(-.5,0){\line(1,0){1}}
\put(-.5,18){\line(1,0){1}}
\put(4.5,13){\line(1,0){1}}
\put(4.5,10){\line(1,0){1}}

\put(0,.5){\line(0,-1){1}}
\put(-.2,-2){$0$}
\put(5,.5){\line(0,-1){1}}
\put(3.9,-2){$m-1$}

\put(-3,-.2){0}
\put(-4,17.8){$h-1$}
\put(6.5,12.8){$h-1+(m-1)c$}
\put(6.5,9.8){$(m-1)d$}

\put(0,0){\circle*{0.35}}\put(0,1){\circle*{0.25}}\put(0,2){\circle*{0.25}}\put(0,3){\circle*{0.25}}\put(0,3){\circle*{0.25}}\put(0,4){\circle*{0.25}}
\put(0,5){\circle*{0.25}}\put(0,6){\circle*{0.25}}\put(0,7){\circle*{0.25}}\put(0,7){\circle*{0.25}}\put(0,8){\circle*{0.25}}\put(0,9){\circle*{0.25}}
\put(0,10){\circle*{0.25}}\put(0,11){\circle*{0.25}}\put(0,12){\circle*{0.25}}\put(0,12){\circle*{0.25}}\put(0,13){\circle*{0.25}}\put(0,14){\circle*{0.25}}
\put(0,15){\circle*{0.25}}\put(0,16){\circle*{0.25}}\put(0,17){\circle*{0.25}}\put(0,18){\circle*{0.35}}\put(0,19){\circle*{0.2}}\put(0,-1){\circle*{0.2}}

\put(-1,0){\circle*{0.2}}\put(-1,1){\circle*{0.2}}\put(-1,2){\circle*{0.2}}\put(-1,3){\circle*{0.2}}\put(-1,3){\circle*{0.2}}\put(-1,4){\circle*{0.2}}
\put(-1,5){\circle*{0.2}}\put(-1,6){\circle*{0.2}}\put(-1,7){\circle*{0.2}}\put(-1,7){\circle*{0.2}}\put(-1,8){\circle*{0.2}}\put(-1,9){\circle*{0.2}}
\put(-1,10){\circle*{0.2}}\put(-1,11){\circle*{0.2}}\put(-1,12){\circle*{0.2}}\put(-1,12){\circle*{0.2}}\put(-1,13){\circle*{0.2}}\put(-1,14){\circle*{0.2}}
\put(-1,15){\circle*{0.2}}\put(-1,16){\circle*{0.2}}\put(-1,17){\circle*{0.2}}\put(-1,18){\circle*{0.2}}\put(-1,19){\circle*{0.2}}\put(-1,-1){\circle*{0.2}}

\put(1,0){\circle*{0.2}}\put(1,1){\circle*{0.2}}\put(1,2){\circle*{0.25}}\put(1,3){\circle*{0.25}}\put(1,3){\circle*{0.25}}\put(1,4){\circle*{0.25}}
\put(1,5){\circle*{0.25}}\put(1,6){\circle*{0.25}}\put(1,7){\circle*{0.25}}\put(1,7){\circle*{0.25}}\put(1,8){\circle*{0.25}}\put(1,9){\circle*{0.25}}
\put(1,10){\circle*{0.25}}\put(1,11){\circle*{0.25}}\put(1,12){\circle*{0.25}}\put(1,12){\circle*{0.25}}\put(1,13){\circle*{0.25}}\put(1,14){\circle*{0.25}}
\put(1,15){\circle*{0.25}}\put(1,16){\circle*{0.25}}\put(1,17){\circle*{0.25}}\put(1,18){\circle*{0.2}}\put(1,19){\circle*{0.2}}\put(1,-1){\circle*{0.2}}

\put(2,0){\circle*{0.2}}\put(2,1){\circle*{0.2}}\put(2,2){\circle*{0.2}}\put(2,3){\circle*{0.2}}\put(2,4){\circle*{0.25}}
\put(2,5){\circle*{0.25}}\put(2,6){\circle*{0.25}}\put(2,7){\circle*{0.25}}\put(2,7){\circle*{0.25}}\put(2,8){\circle*{0.25}}\put(2,9){\circle*{0.25}}
\put(2,10){\circle*{0.25}}\put(2,11){\circle*{0.25}}\put(2,12){\circle*{0.25}}\put(2,12){\circle*{0.25}}\put(2,13){\circle*{0.25}}\put(2,14){\circle*{0.25}}
\put(2,15){\circle*{0.25}}\put(2,16){\circle*{0.25}}\put(2,17){\circle*{0.2}}\put(2,18){\circle*{0.2}}\put(2,19){\circle*{0.2}}\put(2,-1){\circle*{0.2}}

\put(3,0){\circle*{0.2}}\put(3,1){\circle*{0.2}}\put(3,2){\circle*{0.2}}\put(3,3){\circle*{0.2}}\put(3,3){\circle*{0.2}}\put(3,4){\circle*{0.2}}
\put(3,5){\circle*{0.2}}\put(3,6){\circle*{0.25}}\put(3,7){\circle*{0.25}}\put(3,7){\circle*{0.25}}\put(3,8){\circle*{0.25}}\put(3,9){\circle*{0.25}}
\put(3,10){\circle*{0.25}}\put(3,11){\circle*{0.25}}\put(3,12){\circle*{0.25}}\put(3,12){\circle*{0.25}}\put(3,13){\circle*{0.25}}\put(3,14){\circle*{0.25}}
\put(3,15){\circle*{0.25}}\put(3,16){\circle*{0.2}}\put(3,17){\circle*{0.2}}\put(3,18){\circle*{0.2}}\put(3,19){\circle*{0.2}}\put(3,-1){\circle*{0.2}}

\put(4,0){\circle*{0.2}}\put(4,1){\circle*{0.2}}\put(4,2){\circle*{0.2}}\put(4,3){\circle*{0.2}}\put(4,3){\circle*{0.2}}\put(4,4){\circle*{0.2}}
\put(4,5){\circle*{0.2}}\put(4,6){\circle*{0.2}}\put(4,7){\circle*{0.2}}\put(4,7){\circle*{0.2}}\put(4,8){\circle*{0.25}}\put(4,9){\circle*{0.25}}
\put(4,10){\circle*{0.25}}\put(4,11){\circle*{0.25}}\put(4,12){\circle*{0.25}}\put(4,12){\circle*{0.25}}\put(4,13){\circle*{0.25}}\put(4,14){\circle*{0.25}}
\put(4,15){\circle*{0.2}}\put(4,16){\circle*{0.2}}\put(4,17){\circle*{0.2}}\put(4,18){\circle*{0.2}}\put(4,19){\circle*{0.2}}\put(4,-1){\circle*{0.2}}

\put(5,0){\circle*{0.2}}\put(5,1){\circle*{0.2}}\put(5,2){\circle*{0.2}}\put(5,3){\circle*{0.2}}\put(5,3){\circle*{0.2}}\put(5,4){\circle*{0.2}}
\put(5,5){\circle*{0.2}}\put(5,6){\circle*{0.2}}\put(5,7){\circle*{0.2}}\put(5,7){\circle*{0.2}}\put(5,8){\circle*{0.2}}\put(5,9){\circle*{0.2}}
\put(5,10){\circle*{0.35}}\put(5,11){\circle*{0.25}}\put(5,12){\circle*{0.25}}\put(5,13){\circle*{0.35}}\put(5,14){\circle*{0.2}}
\put(5,15){\circle*{0.2}}\put(5,16){\circle*{0.2}}\put(5,17){\circle*{0.2}}\put(5,18){\circle*{0.2}}\put(5,19){\circle*{0.2}}\put(5,-1){\circle*{0.2}}

\put(6,0){\circle*{0.2}}\put(6,1){\circle*{0.2}}\put(6,2){\circle*{0.2}}\put(6,3){\circle*{0.2}}\put(6,3){\circle*{0.2}}\put(6,4){\circle*{0.2}}
\put(6,5){\circle*{0.2}}\put(6,6){\circle*{0.2}}\put(6,7){\circle*{0.2}}\put(6,7){\circle*{0.2}}\put(6,8){\circle*{0.2}}\put(6,9){\circle*{0.2}}
\put(6,10){\circle*{0.2}}\put(6,11){\circle*{0.2}}\put(6,12){\circle*{0.2}}\put(6,12){\circle*{0.2}}\put(6,13){\circle*{0.2}}\put(6,14){\circle*{0.2}}
\put(6,15){\circle*{0.2}}\put(6,16){\circle*{0.2}}\put(6,17){\circle*{0.2}}\put(6,18){\circle*{0.2}}\put(6,19){\circle*{0.2}}\put(6,-1){\circle*{0.2}}


\end{picture}
\end{center}
\vspace{3mm} \caption{A Standard Trapezoid $T(m,h,c,d)=T(6,19,-1,2)$.}
\end{figure}
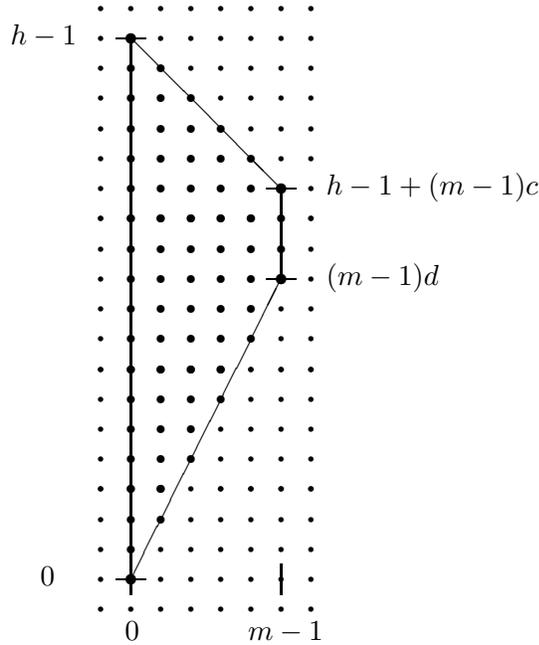

Thus a standard trapezoid $T(m,h,c,d)$ is a trapezoid
with vertical parallel sides, either of which are allowed to consist
of a single point so that our definition of trapezoid includes triangles and
line segments,
and its two (possibly) non-parallel sides of integer slopes $c$ and $d$.
Moreover, $m$ is the number of points in the perpendicular line segment
joining the two parallel sides while $h$ is the number of points in the
leftmost parallel side.  See Figure 1, keeping in mind that the slopes $c$
and $d$ can each be positive, negative or zero in general.

When characterizing equality in \eqref{eq:tightdiscretebis},
by rotating $\R^2$ appropriately and translating $A$ and $B$,
there is no loss of generality to  assume $\ell$ is a vertical
line and $0\in A\cap B$. These are simply normalization hypotheses
in Theorem \ref{thm:invdisc}. When either $A$ or $B$ is one-dimensional,
meaning contained in a line, then the characterization of equality
in \eqref{eq:tightdiscrete} is well-known and follows from Theorem \ref{CDT-forZ}.
A routine calculation shows that two standard trapezoids
$T(m,h,c,d)$ and $T(n,h',c,d)$ with common slopes $c$ and
$d$ satisfy equality \eqref{eq:eq-discrete} below.
Thus the description provided by Theorem \ref{thm:invdisc}
gives a full characterization of the equality \eqref{eq:eq-discrete}.

\begin{theorem}\label{thm:invdisc} Let $A,\,B\subseteq \R^2$ be finite
two-dimensional subsets with $0\in A\cap B$.  Let $m$ and $n$ be the exact number of
vertical lines which cover $A$ and $B$ respectively. Suppose
\begin{equation}\label{eq:eq-discrete}
|A+B|=\left(\frac{|A|}{m}+\frac{|B|}{n}-1\right)(m+n-1).
\end{equation}Then there exists a linear transformation $\varphi:\R^2\rightarrow \R^2$
of the form $$\varphi(x,y)=(\alpha^{-1}x,\beta^{-1}y),$$ for some
positive $\alpha\in \R,\,\beta\in \R$, such that both $\varphi(A)$
and $\varphi(B)$ are standard trapezoids $T(m,h,c,d)$ and
$T(n,h',c,d)$ with common slopes $c$ and $d$.
\end{theorem}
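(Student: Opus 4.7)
The plan is to slice $A$ and $B$ vertically and extract structure from each link of the inequality chain underlying \eqref{eq:tightdiscretebis}. Let $A_i = A\cap\ell_i^A$ for $i=0,\ldots,m-1$ and $B_j = B\cap\ell_j^B$ for $j=0,\ldots,n-1$ be the (nonempty) vertical slices of $A$ and $B$, with $a_i = |A_i|$, $b_j = |B_j|$, and let $\pi_1$ denote horizontal projection. One has the chain
$$|A+B| = \sum_{z\in\pi_1(A+B)}|(A+B)_z| \;\geq\; \sum_z \max_{x_i+y_j=z}|A_i+B_j| \;\geq\; \sum_z \max_{x_i+y_j=z}(a_i+b_j-1) \;\geq\; \Bigl(\tfrac{|A|}{m}+\tfrac{|B|}{n}-1\Bigr)(m+n-1),$$
and equality in \eqref{eq:eq-discrete} forces equality in each of the four links; each equality will contribute a structural ingredient.

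First, equality in the outermost link forces $|\pi_1(A+B)| = m+n-1$; the equality case of one-dimensional Cauchy--Davenport (Theorem \ref{CDT-forZ}), applied to $\pi_1(A)$ and $\pi_1(B)$, then makes both projections arithmetic progressions with a common difference $\alpha>0$, and the normalization $0\in A\cap B$ makes them $\pi_1(A) = \{0,\alpha,\ldots,(m-1)\alpha\}$ and $\pi_1(B) = \{0,\alpha,\ldots,(n-1)\alpha\}$. The innermost combinatorial inequality I establish via the fractional transportation matrix
$$p_{i,j} \;=\; \binom{i+j}{i}\binom{m+n-2-i-j}{m-1-i}\Big/\binom{m+n-2}{m-1},$$
which is strictly positive on $\{0,\ldots,m-1\}\times\{0,\ldots,n-1\}$ and satisfies $\sum_{i+j=k}p_{i,j}=1$ (Vandermonde), $\sum_j p_{i,j}=(m+n-1)/m$, and $\sum_i p_{i,j}=(m+n-1)/n$. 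The bound $\sum_k \max_{i+j=k}(a_i+b_j) \geq \sum_{i,j}p_{i,j}(a_i+b_j) = (m+n-1)(|A|/m+|B|/n)$ follows, and equality (combined with positivity of $p_{i,j}$ on every antidiagonal) forces $a_i+b_j = a_{i'}+b_{j'}$ whenever $i+j = i'+j'$. Hence $(a_i)$ and $(b_j)$ are arithmetic progressions with a common integer difference $\Delta$. Since $\max_{i+j=k}(a_i+b_j)$ is then attained by every pair on the $k$-th antidiagonal, equality in the middle link upgrades to $|A_i+B_j| = a_i+b_j-1$ for \emph{every} pair $(i,j)$; the equality case of one-dimensional Freiman then makes each slice of size at least two an arithmetic progression, and a short propagation through the bipartite graph of such pairs pins down a single common vertical step $(0,\beta)$.

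Next, I set $\varphi(x,y) = (\alpha^{-1}x, \beta^{-1}y)$. After $\varphi$, every slice is a consecutive integer-spaced interval $\varphi(A)_i = \{i\}\times[s_i, s_i+a_i-1]$ with $a_i = a_0+i\Delta$, and similarly $\varphi(B)_j = \{j\}\times[t_j, t_j+b_j-1]$ with $b_j = b_0+j\Delta$. Every sum $\varphi(A)_i + \varphi(B)_j$ on the $k$-th antidiagonal is therefore an interval of the same length $a_0+b_0+k\Delta-1$, and the tightness $|\varphi(A+B)_k| = a_0+b_0+k\Delta-1$ forces all such intervals to coincide. Equivalently, $s_i+t_j$ is constant on each antidiagonal, which forces $s_i = s_0+id$ and $t_j = t_0+jd$ for a common $d\in\R$. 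Setting $c := d+\Delta$, the bottom and top endpoints of $\varphi(A)_i$ lie on lines of slopes $d$ and $c$ respectively, so (absorbing the vertical translation by $(0,-s_0)$ into the definition of standard trapezoid) $\varphi(A)$ is a translate of $T(m,a_0,c,d)$; symmetrically, $\varphi(B)$ is a translate of $T(n,b_0,c,d)$, with the common slopes $c,d$.

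The main technical obstacle I foresee is the propagation argument producing a \emph{single} vertical step $(0,\beta)$: tightness of one-dimensional Freiman on one pair $(A_i,B_j)$ only determines a common step for that pair, and singleton slices carry no step information, so one must traverse the bipartite graph of pairs with both $a_i, b_j\geq 2$ to guarantee a globally consistent $\beta$; the two-dimensionality of $A$ and $B$ excludes the degenerate case where all slices are singletons (which would correspond to a collinear trapezoid $c=d$, $h=1$). The remaining ingredients (verifying the marginal identities for $(p_{i,j})$ and deducing the linear structure of $(s_i),(t_j)$ from the coincidence of intervals on each antidiagonal) are then routine.
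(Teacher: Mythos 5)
Your overall architecture matches the paper's: slice vertically, force equality in each link of the chain \eqref{eq:comp}--\eqref{eq:comp3}, apply Theorem \ref{CDT-forZ} to the slices, and propagate minimal and maximal second coordinates along antidiagonals. The genuinely different ingredient is your transportation matrix $(p_{i,j})$, which replaces the inductive proof of Lemma \ref{lem:average}; your marginal and antidiagonal identities are correct, and the strict positivity of $p_{i,j}$ does give cleanly that $a_i+b_j$ is constant on each antidiagonal in the equality case. However, this argument applies only once you know that the fibers of $(i,j)\mapsto x_i+y_j$ \emph{are} the antidiagonals $\{i+j=k\}$, i.e., once $\pi(A)$ and $\pi(B)$ are already arithmetic progressions with a common difference. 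That is precisely the step you dispose of in one clause (``equality in the outermost link forces $|\pi(A+B)|=m+n-1$''), and it is a genuine gap: for general projections the fibers are scattered antichains in the grid which neither refine nor are refined by the antidiagonals, and your matrix yields no lower bound on $\sum_{z}\max_{x_i+y_j=z}(a_i+b_j-1)$, let alone a criterion for when it is tight. Lemma \ref{lem:average} is built exactly to close this: it bounds the average of the $m+n-1$ \emph{largest} column maxima (the quantity $\ol{u^+(a,b)}$ in \eqref{eq:u+}) by induction on $m+n$, peeling off a unique-expression element of $I+J$; only then does positivity of the weights $|A_i|-\tfrac12$ show that $|I+J|>m+n-1$ would make the full sum strictly larger, so that Theorem \ref{CDT-forZ} forces the projections to be arithmetic progressions of common difference. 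You need this lemma, or a substitute for it, before your transportation bound can be invoked.

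The remaining steps are essentially the paper's and are workable, but note an ordering constraint in the part you flag as the main technical obstacle. Theorem \ref{CDT-forZ} gives no structural information when one of the two slices is a singleton, and two-dimensionality of $A$ does not by itself produce a slice with $a_i\ge 2$: a priori $A$ could consist of $m$ non-collinear points, one per vertical line. To exclude this you must first extract, from the set identities $A_i+B_j=A_{i'}+B_{j'}$ on each antidiagonal, that the sequences of minimal (and maximal) second coordinates are arithmetic progressions with common difference; only then does ``all slices singletons'' force $A$ onto a line and contradict two-dimensionality. Once that is in place, the monotonicity of $(a_i)$ (after a reflection making $\Delta\ge 0$) gives $a_i\ge 2$ for all $i\ge 1$ and $b_j\ge 2$ for all $j\ge 1$, and pairing every slice with $A_{m-1}$ or $B_{n-1}$ yields a single common step $\beta$ directly, with no graph traversal needed. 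With the first gap filled and this reordering made explicit, your proof goes through and coincides in substance with the paper's.
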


For the proof, we will need the following well-known result
(see \cite[Theorem 1.4, Lemma 1.3, Corollary 8.1]{Natbook}).

\begin{theorem}\label{CDT-forZ}Let $G$ be a torsion-free abelian group and let
 $A,\,B\subseteq G$ be finite, nonempty subsets. Then $$|A+B|\geq |A|+|B|-1$$
 with equality if and only if $\min\{|A|,\,|B|\}=1$ or $A$ and $B$ are
 both arithmetic progressions of common difference.
\end{theorem}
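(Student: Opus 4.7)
The plan is to reduce from an arbitrary torsion-free abelian group to a totally ordered setting, prove the lower bound by a one-line chain argument, and then settle the equality case by stripping off the top element and inducting on $|A|+|B|$.

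For the reduction, the finitely generated subgroup $H := \langle A \cup B\rangle$ is torsion-free abelian, hence isomorphic to $\mathbb{Z}^r$ for some $r\geq 0$, which embeds in $\mathbb{Q}^r$. Equipping $\mathbb{Q}^r$ with a lexicographic order compatible with addition induces a total order on $H$. So I may write $A = \{a_1 < a_2 < \cdots < a_k\}$ and $B = \{b_1 < b_2 < \cdots < b_l\}$. The chain
\begin{equation*}
a_1 + b_1 \;<\; a_1+b_2 \;<\; \cdots \;<\; a_1+b_l \;<\; a_2+b_l \;<\; \cdots \;<\; a_k+b_l
\end{equation*}
consists of $k+l-1$ distinct elements of $A+B$, which proves the inequality $|A+B|\geq |A|+|B|-1$. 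The easy direction of the equality case is immediate by direct verification: if $\min\{|A|,|B|\}=1$ then $|A+B|=\max\{|A|,|B|\}$, and if both sets are arithmetic progressions with the same common difference $d$, then $A+B$ is an arithmetic progression of difference $d$ and length $k+l-1$.

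For the hard direction, assume $|A|,|B|\geq 2$ with $|A+B|=|A|+|B|-1$, and induct on $|A|+|B|$. The base case $k=l=2$ is a direct count: the only way the four sums $a_i+b_j$ yield three distinct values is $a_1+b_2=a_2+b_1$, i.e.\ $a_2-a_1=b_2-b_1$. For the inductive step, by symmetry assume $k\geq 3$ and set $A' := A\setminus\{a_k\}$. Since $a_k+b_l$ exceeds every element of $A'+B$, we get $|A'+B|\leq |A+B|-1 = |A'|+|B|-1$, which combined with the lower bound already established forces $|A'+B|=|A'|+|B|-1$. By the inductive hypothesis (valid since $|A'|\geq 2$), $A'$ and $B$ are arithmetic progressions of a common difference $d$, and hence $A'+B$ is an arithmetic progression of difference $d$ and length $k+l-2$.

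The step I expect to require the most care is lifting this back to show $a_k = a_{k-1}+d$. The key observation is that $a_k+b_l$ is the unique element of $A+B$ not in $A'+B$, so $a_k+b_j \in A'+B$ for every $j<l$. Writing $a_k+b_j = a_1+b_1 + s_j d$, one extracts $a_k = a_1+(s_j-j+1)d$, forcing $e := s_j-j+1$ to be a single integer independent of $j$. The bounds $0\leq s_j\leq k+l-3$ across $j=1,\ldots,l-1$ give $e\leq k-1$, while the strict inequality $a_k > a_{k-1}=a_1+(k-2)d$ gives $e\geq k-1$. Thus $e=k-1$, so $a_k = a_1+(k-1)d$ and $A$ is an arithmetic progression with common difference $d$, completing the induction.
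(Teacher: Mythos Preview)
Your proof is correct. The paper does not actually prove this theorem; it quotes it as a well-known result with a citation to Nathanson's \emph{Additive Number Theory} (Theorem~1.4, Lemma~1.3, Corollary~8.1 there), so there is no ``paper's own proof'' to compare against. You have supplied a clean self-contained argument where the paper relies on an external reference.

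A couple of minor remarks on your write-up. First, in the inductive step you implicitly use that the common difference $d$ is positive in the chosen total order; this follows since $a_2-a_1=d$ and $a_1<a_2$, but it is worth saying explicitly, as the step ``$ed>(k-2)d\Rightarrow e>k-2$'' depends on it. Second, the claim that $e$ is a single integer independent of $j$ uses torsion-freeness once more (if $(s_j-j+1)d=(s_{j'}-j'+1)d$ with $d\neq 0$, then the coefficients agree); this is fine but could be made explicit. Finally, your upper bound $e\leq k-1$ really only needs the single value $j=l-1$, and your lower bound $e\geq k-1$ only needs any one $j$; the phrase ``across $j=1,\ldots,l-1$'' slightly overstates what is used.
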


A main part of the proof of Theorem \ref{thm:invdisc} is the following
lemma, which replicates the majority of the original proof used to establish \eqref{eq:tightdiscretebis}.

\begin{lemma}\label{lem:average} Let $I,\,J\subseteq \R$ be finite, nonempty subsets and let $a=(a_i:\; i\in I)$ and $b=(b_j:\; j\in J)$
be two sequences  of non-negative real numbers. For each $t\in I+J$, let
$u_t(a,b)=\max\{a_i+b_{t-i}:\; i\in I, t-i\in J\}$. Then
\be\label{2-red-bound} \frac{1}{|I|+|J|-1}\sum_{t\in I+J}u_t(a,b)\ge
\frac{1}{|I|}\sum_{i\in I} a_i+\frac{1}{|J|}\sum_{j\in J} b_j.\ee
Moreover, if $\min(|I|, |J|)\ge 2$ with $a_i,\,b_j>0$ for $i\in I$ and $j\in J$, then equality holds if and only
if both $I$ and $J$ are arithmetic progressions with   common
difference and both sequences $a$ and $b$ are also arithmetic
progressions with common difference.
\end{lemma}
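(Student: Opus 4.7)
The plan is to bound the left-hand side of \eqref{2-red-bound} by averaging a pointwise inequality over uniformly random monotone lattice paths in a $p\times q$ grid, where $p=|I|$, $q=|J|$, and I write $I=\{i_1<\cdots<i_p\}$ and $J=\{j_1<\cdots<j_q\}$. A path $P$ from $(1,1)$ to $(p,q)$ using unit right or unit up steps visits $p+q-1$ grid points, and since each step strictly increases the value $t_{k,l}:=i_k+j_l$ (both $I,J$ are ordered subsets of $\R$), the values along $P$ are distinct members of $I+J$. Using $a_{i_k}+b_{j_l}\le u_{t_{k,l}}(a,b)$ pointwise and nonnegativity of $a,b$, I would obtain
\[
\sum_{(k,l)\in P}\bigl(a_{i_k}+b_{j_l}\bigr) \;\le\; \sum_{(k,l)\in P} u_{t_{k,l}}(a,b) \;\le\; \sum_{t\in I+J} u_t(a,b)
\]
for every path $P$.

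The main step, and the technical heart of the argument, is to verify that under the uniform distribution on the $\binom{p+q-2}{p-1}$ paths the expected number of visits to any row $k$ is \emph{exactly} $(p+q-1)/p$, and symmetrically $(p+q-1)/q$ for any column $l$, so that averaging the displayed chain over $P$ produces precisely the coefficients wanted on the right-hand side of \eqref{2-red-bound}. This uniformity across rows is not at all apparent on a per-path basis; I would prove it by computing
\[
\sum_{l=1}^{q}\Pr[(k,l)\in P] = \frac{1}{\binom{p+q-2}{p-1}}\sum_{l=1}^{q}\binom{k+l-2}{k-1}\binom{p+q-k-l}{p-k},
\]
and collapsing the numerator sum via the Chu--Vandermonde identity $\sum_{m=0}^{n}\binom{a+m}{a}\binom{b+n-m}{b}=\binom{a+b+n+1}{n}$ (with $a=k-1$, $b=p-k$, $n=q-1$) to $\binom{p+q-1}{q-1}$, which then simplifies to $(p+q-1)/p$ independently of $k$. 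Linearity of expectation, applied to the chain above, yields \eqref{2-red-bound} at once.

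For the equality case with $\min(|I|,|J|)\ge 2$ and $a_i,b_j>0$, equality in the average forces equality for every single path $P$. Strict positivity of $a,b$ ensures $u_t>0$ throughout $I+J$, so the second $\le$ above is tight only if the $p+q-1$ path-values exhaust $I+J$, forcing $|I+J|=p+q-1$; Theorem~\ref{CDT-forZ}, applied in the torsion-free group $\R$, then forces $I$ and $J$ to be arithmetic progressions sharing a common difference $d$. Equality in the first $\le$, required for every $(k,l)$ on some path and hence for every $(k,l)\in[p]\times[q]$, says that $a_{i_k}+b_{j_l}$ is constant on each antidiagonal $\{(k',l'):k'+l'=k+l\}$ (which now coincides with $D_{t_{k,l}}$, thanks to the common difference); comparing $(k,l)$ with $(k+1,l-1)$ yields $a_{i_{k+1}}-a_{i_k}=b_{j_l}-b_{j_{l-1}}$ for all valid indices, so both $a$ and $b$ are also arithmetic progressions with a common shared difference. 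The converse direction reduces to a direct computation verifying that these AP hypotheses produce equality throughout the chain.
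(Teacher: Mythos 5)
Your proof is correct, and it takes a genuinely different route from the paper's. The paper proves \eqref{2-red-bound} by induction on $|I|+|J|$: it deletes a unique-expression element $\alpha+\beta$ of $I+J$ (choosing which summand to shrink so that $\bar{b}-\bar{b'}\le\bar{a}-\bar{a'}$), applies the inductive hypothesis to $(a,b')$, and then settles the equality case through a second induction with base case $m=n=2$ and a delicate analysis of when the deletion can be arranged at both ends. Your argument replaces all of this with a single double count: average the pointwise bound $a_{i_k}+b_{j_l}\le u_{t_{k,l}}(a,b)$ over uniformly random monotone lattice paths, using the Chu--Vandermonde computation to show that the expected number of visits to each row is exactly $(p+q-1)/p$ and to each column $(p+q-1)/q$. (That row-uniformity can even be seen without the binomial identity: the visits to row $k$ are $1$ plus the number of up-steps in the $k$-th gap of a uniformly random shuffle of $p-1$ right-steps and $q-1$ up-steps, and by exchangeability each of the $p$ gaps holds $(q-1)/p$ up-steps on average.) The payoff of your approach is a markedly cleaner equality analysis: equality of an average against a uniform upper bound forces equality path by path and term by term, which immediately yields $|I+J|=|I|+|J|-1$ (hence, via Theorem \ref{CDT-forZ} and $u_t>0$, that $I$ and $J$ are arithmetic progressions with common difference) and constancy of $a_{i_k}+b_{j_l}$ on antidiagonals (hence that $a$ and $b$ are arithmetic progressions with the same difference), bypassing the paper's case analysis entirely. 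The paper's induction additionally delivers the intermediate inequality \eqref{eq:u+} for the $|I|+|J|-1$ largest values $u^{+}(a,b)$, but your argument in fact gives that too, since each path visits only $|I|+|J|-1$ distinct sums; so nothing is lost. The only blemishes are cosmetic: the notation $D_{t_{k,l}}$ is never defined (you mean the set of representations of $t_{k,l}$ in $I+J$), and the converse direction deserves the one-line verification that when $I,J,a,b$ are all arithmetic progressions with common difference, $a_{i_{k'}}+b_{j_{l'}}$ depends only on $k'+l'$, so every term in the chain is an equality.
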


\begin{proof} For a finite
sequence $x=(x_i:\; i\in K)$, denote by
$\ol{x}=\frac{1}{|K|}\sum_{i\in K}x_i$ its average value. If
$y=(y_i:\; i\in L)$ is another sequence, we denote by   $u^+(x,y)$ the subsequence of the
$|K|+|L|-1$ largest  elements in the sequence $u(x,y)=(u_t(x,y):\;
t\in K+L)$, which is well-defined in view of Theorem \ref{CDT-forZ}.
We shall prove that \be\label{eq:u+} \ol{u^+(a,b)}\ge
\ol{a}+\ol{b}. \ee

Let $m=|I|$ and $n=|J|$. The proof is by induction on $m+n$. If
either $m=1$ or $n=1$, then equality in \eqref{eq:u+} (and in
(\ref{2-red-bound})) clearly holds. Assume that $m,\, n\ge 2$.

Let $\alpha\in I$ and $\beta\in J$ be elements such that $\alpha+\beta\in I+J$ is a unique expression element; for instance,
letting $\alpha$ and $\beta$ be the minimal elements from $I$ and $J$, or the maximal elements, would guarantee this property. Let $a'=(a_i:\; i\in I\setminus \{
\alpha\})$ and $b'=(b_j:\; j\in J\setminus \{ \beta\})$. We may
assume that $\bar{b}-\bar{b'}\leq \bar{a}-\bar{a'}$. We clearly have
    \be\label{eq:ineq1}(m+n-1)\ol{u^+(a,b)}\ge (m+n-2)\ol{u^+(a,b')}+a_{\alpha}+b_{\beta}.\ee
By the induction hypothesis $\ol{u^+(a,b')}\ge \bar{a}+\bar{b'}$
and using the assumption $\bar{b}-\bar{b'}\leq \bar{a}-\bar{a'}$,
it follows that
    \begin{eqnarray}
    (m+n-1)\ol{u^+(a,b)}&\geq&(m+n-2)(\bar{a}+\bar{b'})+a_{\alpha}+b_{\beta}\label{eq:ineq2}\\
    &=&(m+n-2)(\bar{a}+\bar{b'})+m\bar{a}-(m-1)\bar{a'}+n\bar{b}-(n-1)\bar{b'}\nonumber\\
    &=&(m+n-1)(\bar{a}+\bar{b})+(m-1)(\bar{a}-\bar{a'})+(m-1)(\bar{b'}-\bar{b})\nonumber \\
    &\ge&(m+n-1)(\bar{a}+\bar{b}),\label{eq:ineq3}
    \end{eqnarray}
as claimed. In view of $a_i,\,b_j\geq 0$, we see that \eqref{2-red-bound} follows from \eqref{eq:u+}.

Suppose now that $\min \{ m,n\}\ge 2$, that $a_i,\,b_j>0$ for $i\in I$ and $j\in J$, and that there is equality in
\eqref{2-red-bound}. Then  $I$ and $J$ are   arithmetic progressions
with   common difference---as otherwise Theorem \ref{CDT-forZ} implies $|I+J|>|I|+|J|-1$ while $u_t(a,b)>0$ for all $t\in I+J$, whence
inequality \eqref{eq:u+} implies $\ol{a}+\ol{b}\le \ol{u^+(a,b)}<
\frac{1}{|I|+|J|-1}\sum_{t\in I+J}u_t(a,b)$, contrary to assumption. Therefore we may assume w.l.o.g.
that $I=\{1,2,\ldots ,m\}$ and $J=\{ 1,2,\ldots ,n\}$.

In order to see that the sequences $a$ and $b$ are arithmetic
progressions with the same difference,  we again proceed by induction
on $m+n$ starting at $m=n=2$. To this end, suppose $m=2$ and $n\geq 2$. In this case, the equality reads (multiplying both sides by $n+1$)
$$
\frac{n+1}{2}\tilde a+\tilde b+\frac{\tilde b}{n}=a_1+b_1+\Sum{t=2}{n}\max (a_1+b_{t},
a_2+b_{t-1})+a_2+b_n,
$$
where $\tilde b=\Sum{j=1}{n}b_j$ and $\tilde a=\Sum{i=1}{m}a_i=a_1+a_2$. Using the estimate $\max (a_1+b_{t},
a_2+b_{t-1})\geq \frac12(a_1+b_{t}+
a_2+b_{t-1})$, which holds with equality if and only if $a_2-a_1=b_t-b_{t-1}$, we conclude that
\ber\nn\frac{n+1}{2}\tilde a+\tilde b+\frac{\tilde b}{n}&\geq& a_1+b_1+\frac12\Sum{t=2}{n}(a_1+b_{t}+
a_2+b_{t-1})+a_2+b_n\\&=&\label{startermotor}\frac{n+1}{2}\tilde a+\tilde b+\frac{b_1+b_n}{2},\eer with equality if and only if $a$ and $b$ are arithmetic progressions of common difference $a_2-a_1$. When $n=2$, we have $b_1+b_n=\tilde{b}$, so equality holds in \eqref{startermotor}, yielding the desired conclusion. This completes the base case $m=n=2$. Therefore we may assume w.l.o.g. that $m\geq 2$ and $n\geq 3$ and continue with a second base case of sorts.

Suppose $m=2$, $n\geq 3$ and
 $\bar{b}-\bar{b'}\leq \bar{a}-\bar{a'}$ fails both when taking $\alpha=\beta=1$ to be the minimal elements in $I$ and $J$, and when taking $\alpha=m=2$ and $\beta=n$ to be the maximal elements in $I$ and $J$, where $a'$ and $b'$ are as defined in the proof of \eqref{eq:ineq3}. This means
 \begin{align}\nn&\frac{a_2-a_1}{2}=\frac12 (a_1+a_2)-a_1< \frac{1}{n}\Sum{i=1}{n}b_i-\frac{1}{n-1}\Sum{i=2}{n}b_i=\frac{nb_1-\tilde b}{n(n-1)}\;\und \\
 &\frac{a_1-a_2}{2}=\frac12 (a_1+a_2)-a_2<\frac{1}{n}\Sum{i=1}{n}b_i-\frac{1}{n-1}\Sum{i=1}{n-1}b_i=\frac{nb_n-\tilde b}{n(n-1)},\nn\end{align} where, as before, $\tilde b=\Sum{i=1}{n}b_i$.
 Adding both these inequalities yields $0<\frac{nb_1+nb_n-2\tilde b}{n(n-1)}$, which implies $\frac{b_1+b_n}{2}>\frac{\tilde b}{n}$. However, since $m=2$, this contradicts \eqref{startermotor}. So, when  $m=2$ and $n\geq 3$, we may assume $\bar{b}-\bar{b'}\leq \bar{a}-\bar{a'}$ with w.l.o.g. $\alpha=\beta=1$, and when $m\geq 3$ and $n\geq 3$, we may also (by symmetry) assume $\bar{b}-\bar{b'}\leq \bar{a}-\bar{a'}$ with $\alpha=\beta=1$. We can now finish the general case $m\geq 2$ and $n\geq 3$ as follows.

 Since $\bar{b}-\bar{b'}\leq \bar{a}-\bar{a'}$, we must have
equality in \eqref{eq:ineq2} and \eqref{eq:ineq3}. Since $I$ and $J\setminus \{1\}$ are arithmetic progressions with common difference, we have $u^+(a,b')=u(a,b')$. Thus
equality in \eqref{eq:ineq2}  implies
$\ol{u(a,b')}=\ol{u^+(a,b')}=\bar{a}+\bar{b'}$. Consequently, since $n\geq 3$, we can apply the  induction
hypothesis to $a$ and $b'$ and thus conclude they are arithmetic progressions with common
difference $d=a_2-a_1$. Equality in (\ref{eq:ineq3}) implies $\bar{b'}-\bar{b}=\nn\bar{a'}-\bar{a}$, whence $a$ and $b'$ being arithmetic progressions with common difference $d=a_2-a_1$ implies
\begin{align}\nn
&\frac{1}{n}(b_2-b_1)+d\frac{n-2}{2n}=\frac{1}{n-1}\Sum{i=0}{n-2}(b_2+id)-\frac{1}{n}(b_1+\Sum{i=0}{n-2}(b_2+id))
\\&=\bar{b'}-\bar{b}=\nn\bar{a'}-\bar{a}=\frac{1}{m-1}\Sum{i=1}{m-1}(a_1+id)-\frac{1}{m}\Sum{i=0}{m-1}(a_1+id)=
\frac{d}{2}.\end{align} Consequently, $b_2-b_1=d=a_2-a_1$, so that $b$, as well as $b'$ and $a$, is an arithmetic progression with difference $d$, completing the proof.
\end{proof}

\begin{proof}[Proof of Theorem \ref{thm:invdisc}] For a set $X\subseteq \R^2$ and $i\in \R$, we
let $X_i=X\cap \{(x,y)\mid x=i,\,y\in \R\}$ denote the intersection of $X$ with the
vertical line defined by $x=i$. We let $\pi:\R^2\rightarrow \R$ denote the vertical projection map onto the horizontal axis: $\pi(x,y)=x$. Observe that $|\pi(A)|=m$ and $|\pi(B)|=n$. If $m=1$, then $A$ is contained in a vertical line, contrary to the hypothesis that
it is two-dimensional. As $B$ is also two-dimensional by hypothesis, we cannot have $n=1$ either. Therefore  $m,\,n\geq 2$.

We have
\begin{eqnarray}\label{well-align-lowerbound}
 |A+B|&=&\sum_{t\in \pi(A)+\pi(B)} |(A+B)_t|\label{eq:comp} \\
    &=& \sum_{t\in \pi(A)+\pi(B)} \left|\cup_{i\in \pi(A), t-i\in \pi(B)} (A_i+B_{t-i})\right|\label{eq:comp0}\\
    &\ge&  \sum_{t\in \pi(A)+\pi(B)}\max\{ |A_i+B_{t-i}|:\; i\in \pi(A), t-i\in \pi(B)\} \label{eq:comp1} \\
    &\ge&\sum_{t\in \pi(A)+\pi(B)} \max\{ |A_i|+|B_{t-i}|-1:\; i\in \pi(A), t-i\in \pi(B)\}\label{eq:comp2}\\
    &\ge& (m+n-1)\left(\frac{|A|}{m}+\frac{|B|}{n}-1\right)\label{eq:comp3},
        \end{eqnarray}
        where inequality \eqref{eq:comp2} follows from Theorem \ref{CDT-forZ} and inequality \eqref{eq:comp3} follows from the first
        part
        of Lemma \ref{lem:average} with the sequences $a=(|A_i|-\frac12:\;
        i\in \pi(A))$ and $b=(|B_j|-\frac12:\; j\in \pi(B))$.

Since $A$ and $B$ are extremal sets verifying equality in the
lower bound
(\ref{eq:tightdiscrete}), we have equality in each of
(\ref{eq:comp1}), (\ref{eq:comp2}) and (\ref{eq:comp3}).

Equality in (\ref{eq:comp3}) implies, in view of $m,\,n\geq 2$ and the second part of
Lemma \ref{lem:average}, that $\pi(A)$ and $\pi(B)$ are arithmetic
progressions with common difference (say) $\alpha>0$. By applying the linear transformation $(x,y)\mapsto (\alpha^{-1}x,y)$ and then translating, we
may w.l.o.g. assume $\alpha=1$ with $\pi(A)=\{0,1,\ldots ,m-1\}$ and
$\pi(B)=\{0,1,\ldots ,n-1\}$. Moreover, by the same lemma, the
sequences $|A_0|, |A_1|, \ldots,$ $|A_{m-1}|$ and
$|B_0|,|B_1|,\ldots ,|B_{n-1}|$ are also arithmetic progressions
with the same common difference (say) $d'\in \R$, and w.l.o.g. we can assume $d'\geq 0$ (as it suffices to prove the
theorem for horizontal reflections of $A$ and $B$). In particular, for each $t\in \pi(A+B)$,
the terms inside
the $\max$ function in (\ref{eq:comp2}) have the same common value, which means that
\ber\label{wiffleball}|A_i+B_{t-i}|&=&|A_i|+|B_{t-i}|-1\und \\
A_i+B_{t-i}&=&A_j+B_{t-j}\label{union-equals}
\eer whenever $A_i+B_{t-i}$ and $A_j+B_{t-j}$ are nonempty---in view of Theorem \ref{CDT-forZ} and equality holding in \eqref{eq:comp2} and \eqref{eq:comp1}.

For $i=0,1,\ldots,n-1$, let $b_i\in\Z$   be the minimal second coordinate of the elements from $B_i$ and let $b'_i\in \Z$ be the maximal second coordinate of the elements from $B_i$. For $i=0,1,\ldots,m-1$, let $a_i\in \Z$ be the  minimal second coordinate of an element from $A_i$ and let $a'_i\in \Z$ be  the  maximal second coordinate of an element from $A_i$.  In view of \eqref{union-equals}, we have $$a_{i}+b_{t-i}=a_{j}+b_{t-j}\und a'_{i}+b'_{t-i}=a'_{j}+b'_{t-j}$$ whenever $t\in \pi(A+B)=\{0,1,2,\ldots, m+n-2\}$, $i,\,j\in [0,m-1]$ and $t-i,\,t-j\in [0,n-1]$. In particular, since $m,\,n\geq 2$, we have $a_0+b_i=a_1+b_{i-1}$ and $b_0+a_j=b_1+a_{j-1}$ for all $i=1,\ldots,n-1$ and $j=1,\ldots,m-1$. Thus the sequences $b_0,b_1,\ldots,b_{n-1}$ and $a_0,a_1,\ldots,a_{m-1}$ are arithmetic progressions of common difference (say) $d\in \R$ and, likewise, the sequences $b'_0,b'_1,\ldots,b'_{n-1}$ and $a'_0,a'_1,\ldots,a'_{m-1}$ are arithmetic progressions of common difference (say) $c\in \R$.
Since $c-d=b'_1-b'_0=(b_1+|B_1|-b_0-|B_0|)-(b_1-b_0)=|B_1|-|B_0|$, we see that $c-d\in \Z$.

In view of $d'\geq 0$, we have $|B_0|\leq |B_1|\leq \ldots \leq |B_{n-1}|$ and $|A_0|\leq |A_1|\leq \ldots \leq |A_{m-1}|$, with all inequalities being strict when $d'>0$. Thus $|B_1|=1$ is only possible if $d'=0$ and $|B_i|=1$ for all $i=0,1,\ldots,n-1$, in which case $B_i=\{b_i\}$ for all $b_i$. However, in this case, $B$ is contained in a line with slope $d$, contradicting that $B$ is two-dimensional. Therefore, we conclude that $|B_i|\geq 2,$ for $i=1,\ldots,n-1$. Likewise, since $A$ is also assumed two-dimensional by hypothesis, we have $|A_i|\geq 2,$ for $i=1,\ldots,m-1$.

Let $i\in \{1,\ldots,m-1\}$. Then $|A_i|\geq 2$ and $|B_{n-1}|\geq 2$ (as $n\geq 2$), whence the equality $|A_i+B_{n-1}|=|A_i|+|B_{n-1}|-1$ from \eqref{wiffleball} together with Theorem \ref{CDT-forZ} shows that $A_i$ and $B_{n-1}$ are both arithmetic progressions of common difference (say) $\beta\in \R$. If $|A_0|=1$, then $A_0$ is trivially also an arithmetic progression with difference $\beta$, and if $|A_0|\geq 2$, then applying the above argument to $A_0+B_{n-1}$ shows  $A_0$ to be an arithmetic progression with difference $\beta$ as well. Repeating these arguments for $j\in \{0,1,\ldots,n-1\}$, using $A_{m-1}+B_j$ instead of $A_i+B_{n-1}$, shows that each $B_j$, for $j=0,1,\ldots, n-1$, is also an arithmetic progression with difference $\beta$. Since $|B_{n-1}|\geq 2$, we conclude that $\beta\neq 0$. Thus, choosing the sign of the difference appropriately, we may assume $\beta>0$, and then, applying the affine transformation $(x,y)\mapsto (x,\beta^{-1}y)$, we may w.l.o.g assume $\beta=1$. We now have $A,B\subseteq \langle (0,\beta),(\alpha,d)\rangle =\langle (0,1),(1,d)\rangle$, whence the theorem is easily seen to hold in view of $b_0,b_1,\ldots,b_{n-1}$ and $a_0,a_1,\ldots,a_{m-1}$ being arithmetic progressions of common difference $d\in \R$, \ $b'_0,b'_1,\ldots,b'_{n-1}$ and $a'_0,a'_1,\ldots,a'_{m-1}$ being arithmetic progressions of common difference  $c\in \R$, \ $c-d\in \Z$, and $\pi(A)$ and $\pi(B)$ being arithmetic progression of common difference $\alpha=1$.
\end{proof}

\section{The Discrete Case II}\label{sec:disreteII}

As mentioned in the introduction, we also have the
alternative extension of the lower bound \eqref{eq:tightdiscretebis}
in which, for a given line $\ell$,  the meaning of $m$ and $n$ in
that equation  is replaced by the maximum number of points of $A$
contained in a line parallel to $\ell$ and the
maximal number of points of  $B$ contained on a line parallel to
$\ell$; see Theorem \ref{thm-discrete-II} and \eqref{eq:eq-discreteII}.

Theorem \ref{thm-discrete-II} below provides the characterization of
equality for two-dimensional subsets in the alternative bound
\eqref{eq:eq-discreteII} for $m,\,n\geq 2$. As with Theorem
\ref{thm:invdisc}, the question of characterizing equality when one
of the subsets is one-dimensional is well-known and a simple
consequence of Theorem \ref{CDT-forZ}. For the proof, we will
essentially reduce the problem to the characterization of equality
in \eqref{eq:tightdiscretebis} and then invoke Theorem
\ref{thm:invdisc}. As with Theorem \ref{thm:invdisc}, the assumption
about $\ell$ being a horizontal line is purely a normalization
assumption.

The assumption $m,\,n\geq 2$ is quite necessary. When $m=1$ and
$n\geq 2$ (or $n=1$ and $m\geq 2$), there appear to be much wilder
sets satisfying the equality \eqref{eq:eq-discreteII}. For example,
the pair \begin{align}&A=\{(0,0), (0,1), (1,-1)\}\nn\und\\\nn
&B=\{(0,2),(0,1),(0,0),
(1,0),(1,-1),(2,0),(2,-1),(2,-2),(x,0)\},\end{align} where $x\geq
4$, attains equality in \eqref{eq:eq-discreteII} for
$m=1, n=4, |A|=3, |B|=9, |A+B|=17,$ yet the
horizontal sections of $B$ are not all arithmetic progressions nor
is the set $B$ even vaguely convex in appearance!

Throughout this section, we use $\pi:\R^2\rightarrow \R$ and
$\pi':\R^2\rightarrow \R$ to denote the vertical and horizontal
projection maps: $$\pi(x,y)=x\quad \und\quad  \pi'(x,y)=y.$$

As alluded to in the introduction, the extremal structures for
\eqref{eq:eq-discreteII} include two new cases which are particular
perturbations of a trapezoid.

\begin{definition}
Let $A'$ be a standard trapezoid $T(m,h,c,d)$ with $\min
\pi'(A')=\min \pi(A')=0$ and $c,\,d\geq 0$ integers with $c$ and $d$ not both
zero. Given a $(0,1)$--sequence $\epsilon=(\epsilon_i:\; i\in
\Z)$ with $\epsilon_i=0$ for all sufficiently small $i$, let $\tau_{\epsilon}:\Z^2\rightarrow \Z^2$ be the
map defined by
$$\tau_{\epsilon}(x,y)=(x+\sum_{i\le y}\epsilon_i,y).$$

Let $\epsilon=(\epsilon_i:\; i\in
\Z)$ be a $(0,1)$--sequence with zero entries outside the
interval $[md,h-c-1]$ and  at most one entry equal to one in every
subsequence of consecutive $\max\{ c,d\}$ entries. An
$\epsilon$--standard trapezoid  $T_{\epsilon}(m,h,c,d)$ is a
translate of $\tau_{\epsilon}(A')$.
\end{definition}

An example of an $\epsilon$--standard trapezoid  is shown in Figure
2. As  can be checked by adapting the arguments from the proof of
Theorem \ref{thm-discrete-II}, if a pair $(A',B)$ of standard
trapezoids satisfy equality in \eqref{eq:tightdiscrete}, then
the pair $(A,B)$, with $A$ an $\epsilon$--standard trapezoid with
the same parameters as $A'$, satisfies equality in
\eqref{eq:eq-discreteII} for any suitable choice of the sequence
$\epsilon$.  However, the projection of $A$ along any line may have
more than $m$ points and $A$ may not even be convex; see for
instance the example in  Figure 2.

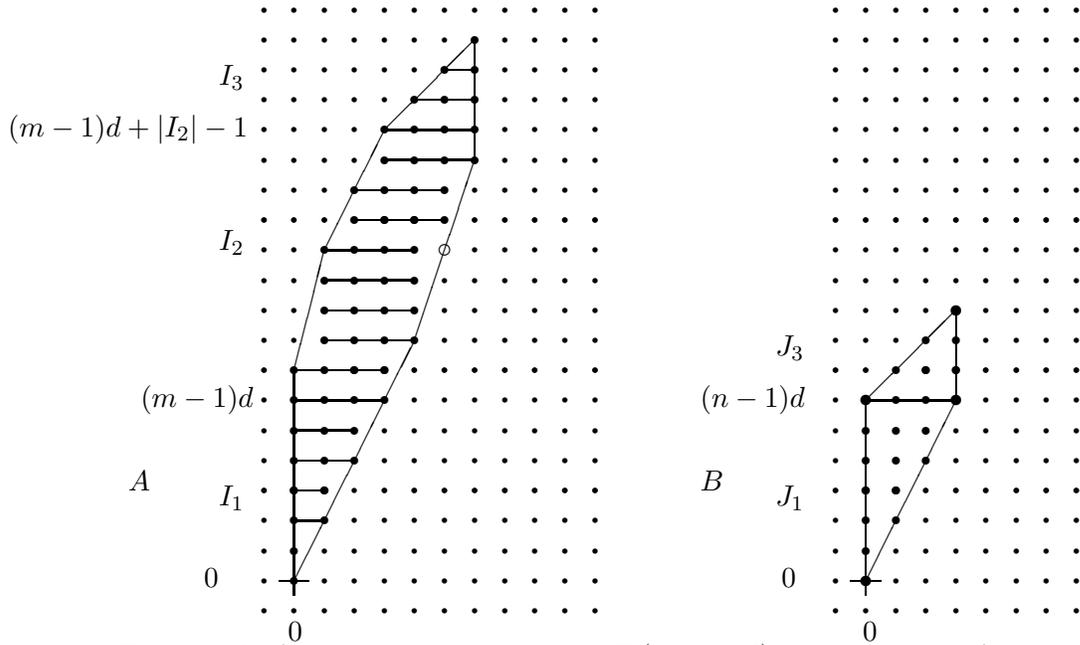
\begin{figure}[ht]\label{fig:apptrap}
\setlength{\unitlength}{4mm}
\begin{center}
\begin{picture}(20,19)

\put(0,0){\line(0,1){7}}
\put(0,0){\line(1,2){3}} \put(0,7){\line(1,4){1}}
\put(1,11){\line(1,2){2}} \put(3,6){\line(1,2){1}}
\put(4,8){\line(1,3){2}}

\put(0,6){\line(1,0){3}} \put(0,7){\line(1,0){3}}
\put(1,8){\line(1,0){3}} \put(1,9){\line(1,0){3}}
\put(1,10){\line(1,0){3}} \put(1,11){\line(1,0){3}}
\put(2,12){\line(1,0){3}} \put(2,13){\line(1,0){3}}
\put(3,14){\line(1,0){3}} \put(3,15){\line(1,0){3}}
\put(3,15){\line(1,1){3}} \put(6,14){\line(0,1){4}}
\put(4,16){\line(1,0){2}} \put(5,17){\line(1,0){1}}
\put(0,5){\line(1,0){2}} \put(0,4){\line(1,0){2}}
\put(0,3){\line(1,0){1}} \put(0,2){\line(1,0){1}}

\put(-5.5,3){$A$} \put(13.5,3){$B$}

\put(-.5,0){\line(1,0){1}} \put(18.5,0){\line(1,0){1}}

\put(19,0){\line(0,1){6}} \put(19,6){\line(1,1){3}}
\put(22,6){\line(0,1){3}} \put(19,0){\line(1,2){3}}

\put(0,.5){\line(0,-1){1}} \put(-.2,-2){$0$} \put(18.9,-2){$0$}

\put(-2.5,11){$I_2$} \put(-2.5,2.5){$I_1$} \put(-2.5,16.5){$I_3$}

\put(16,7.5){$J_3$} \put(16,2.5){$J_1$}

\put(19,6){\line(1,0){3}}

\put(-5.1,5.8){$(m-1)d$} \put(13.5,5.8){$(n-1)d$}

\put(-9.5,14.8){$(m-1)d+|I_2|-1$}

\put(19,.5){\line(0,-1){1}}

\put(16.2,-.2){$0$} \put(-3,-.2){$0$}

\put(0,0){\circle*{0.25}}\put(0,1){\circle*{0.25}}\put(0,2){\circle*{0.25}}\put(0,3){\circle*{0.25}}\put(0,3){\circle*{0.25}}\put(0,4){\circle*{0.25}}
\put(0,5){\circle*{0.25}}\put(0,6){\circle*{0.25}}\put(0,7){\circle*{0.25}}\put(0,7){\circle*{0.2}}\put(0,8){\circle*{0.2}}\put(0,9){\circle*{0.2}}
\put(0,10){\circle*{0.2}}\put(0,11){\circle*{0.2}}\put(0,12){\circle*{0.2}}\put(0,12){\circle*{0.2}}\put(0,13){\circle*{0.2}}\put(0,14){\circle*{0.2}}
\put(0,15){\circle*{0.2}}\put(0,16){\circle*{0.2}}\put(0,17){\circle*{0.2}}\put(0,18){\circle*{0.2}}\put(0,19){\circle*{0.2}}\put(0,-1){\circle*{0.2}}

\put(-1,0){\circle*{0.2}}\put(-1,1){\circle*{0.2}}\put(-1,2){\circle*{0.2}}\put(-1,3){\circle*{0.2}}\put(-1,3){\circle*{0.2}}\put(-1,4){\circle*{0.2}}
\put(-1,5){\circle*{0.2}}\put(-1,6){\circle*{0.2}}\put(-1,7){\circle*{0.2}}\put(-1,7){\circle*{0.2}}\put(-1,8){\circle*{0.2}}\put(-1,9){\circle*{0.2}}
\put(-1,10){\circle*{0.2}}\put(-1,11){\circle*{0.2}}\put(-1,12){\circle*{0.2}}\put(-1,12){\circle*{0.2}}\put(-1,13){\circle*{0.2}}\put(-1,14){\circle*{0.2}}
\put(-1,15){\circle*{0.2}}\put(-1,16){\circle*{0.2}}\put(-1,17){\circle*{0.2}}\put(-1,18){\circle*{0.2}}\put(-1,19){\circle*{0.2}}\put(-1,-1){\circle*{0.2}}

\put(1,0){\circle*{0.2}}\put(1,1){\circle*{0.2}}\put(1,2){\circle*{0.25}}\put(1,3){\circle*{0.25}}\put(1,3){\circle*{0.25}}\put(1,4){\circle*{0.25}}
\put(1,5){\circle*{0.25}}\put(1,6){\circle*{0.25}}\put(1,7){\circle*{0.25}}\put(1,8){\circle*{0.25}}\put(1,9){\circle*{0.25}}
\put(1,10){\circle*{0.25}}\put(1,11){\circle*{0.25}}\put(1,12){\circle*{0.2}}\put(1,12){\circle*{0.2}}\put(1,13){\circle*{0.2}}\put(1,14){\circle*{0.2}}
\put(1,15){\circle*{0.2}}\put(1,16){\circle*{0.2}}\put(1,17){\circle*{0.2}}\put(1,18){\circle*{0.2}}\put(1,19){\circle*{0.2}}\put(1,-1){\circle*{0.2}}

\put(2,0){\circle*{0.2}}\put(2,1){\circle*{0.2}}\put(2,2){\circle*{0.2}}\put(2,3){\circle*{0.2}}\put(2,4){\circle*{0.25}}
\put(2,5){\circle*{0.25}}\put(2,6){\circle*{0.25}}\put(2,7){\circle*{0.25}}\put(2,7){\circle*{0.25}}\put(2,8){\circle*{0.25}}\put(2,9){\circle*{0.25}}
\put(2,10){\circle*{0.25}}\put(2,11){\circle*{0.25}}\put(2,12){\circle*{0.25}}\put(2,12){\circle*{0.2}}\put(2,13){\circle*{0.25}}\put(2,14){\circle*{0.2}}
\put(2,15){\circle*{0.2}}\put(2,16){\circle*{0.2}}\put(2,17){\circle*{0.2}}\put(2,18){\circle*{0.2}}\put(2,19){\circle*{0.2}}\put(2,-1){\circle*{0.2}}

\put(3,0){\circle*{0.2}}\put(3,1){\circle*{0.2}}\put(3,2){\circle*{0.2}}\put(3,3){\circle*{0.2}}\put(3,3){\circle*{0.2}}\put(3,4){\circle*{0.2}}
\put(3,5){\circle*{0.2}}\put(3,6){\circle*{0.25}}\put(3,7){\circle*{0.25}}\put(3,7){\circle*{0.25}}\put(3,8){\circle*{0.25}}\put(3,9){\circle*{0.25}}
\put(3,10){\circle*{0.25}}\put(3,11){\circle*{0.25}}\put(3,12){\circle*{0.25}}\put(3,12){\circle*{0.25}}\put(3,13){\circle*{0.25}}\put(3,14){\circle*{0.25}}
\put(3,15){\circle*{0.25}}\put(3,16){\circle*{0.2}}\put(3,17){\circle*{0.2}}\put(3,18){\circle*{0.2}}\put(3,19){\circle*{0.2}}\put(3,-1){\circle*{0.2}}

\put(4,0){\circle*{0.2}}\put(4,1){\circle*{0.2}}\put(4,2){\circle*{0.2}}\put(4,3){\circle*{0.2}}\put(4,3){\circle*{0.2}}\put(4,4){\circle*{0.2}}
\put(4,5){\circle*{0.2}}\put(4,6){\circle*{0.2}}\put(4,7){\circle*{0.2}}\put(4,7){\circle*{0.2}}\put(4,8){\circle*{0.25}}\put(4,9){\circle*{0.25}}
\put(4,10){\circle*{0.25}}\put(4,11){\circle*{0.25}}\put(4,12){\circle*{0.25}}\put(4,12){\circle*{0.25}}\put(4,13){\circle*{0.25}}\put(4,14){\circle*{0.25}}
\put(4,15){\circle*{0.25}}\put(4,16){\circle*{0.25}}\put(4,17){\circle*{0.2}}\put(4,18){\circle*{0.2}}\put(4,19){\circle*{0.2}}\put(4,-1){\circle*{0.2}}

\put(5,0){\circle*{0.2}}\put(5,1){\circle*{0.2}}\put(5,2){\circle*{0.2}}\put(5,3){\circle*{0.2}}\put(5,3){\circle*{0.2}}\put(5,4){\circle*{0.2}}
\put(5,5){\circle*{0.2}}\put(5,6){\circle*{0.2}}\put(5,7){\circle*{0.2}}\put(5,7){\circle*{0.2}}\put(5,8){\circle*{0.2}}\put(5,9){\circle*{0.2}}
\put(5,10){\circle*{0.2}}\put(5,11){\circle{0.35}}\put(5,12){\circle*{0.25}}\put(5,13){\circle*{0.25}}\put(5,14){\circle*{0.25}}
\put(5,15){\circle*{0.25}}\put(5,16){\circle*{0.25}}\put(5,17){\circle*{0.25}}\put(5,18){\circle*{0.2}}\put(5,19){\circle*{0.2}}\put(5,-1){\circle*{0.2}}

\put(6,0){\circle*{0.2}}\put(6,1){\circle*{0.2}}\put(6,2){\circle*{0.2}}\put(6,3){\circle*{0.2}}\put(6,3){\circle*{0.2}}\put(6,4){\circle*{0.2}}
\put(6,5){\circle*{0.2}}\put(6,6){\circle*{0.2}}\put(6,7){\circle*{0.2}}\put(6,7){\circle*{0.2}}\put(6,8){\circle*{0.2}}\put(6,9){\circle*{0.2}}
\put(6,10){\circle*{0.2}}\put(6,11){\circle*{0.2}}\put(6,12){\circle*{0.2}}\put(6,12){\circle*{0.2}}\put(6,13){\circle*{0.2}}\put(6,14){\circle*{0.25}}
\put(6,15){\circle*{0.25}}\put(6,16){\circle*{0.25}}\put(6,17){\circle*{0.25}}\put(6,18){\circle*{0.25}}\put(6,19){\circle*{0.2}}\put(6,-1){\circle*{0.2}}

\put(7,0){\circle*{0.2}}\put(7,1){\circle*{0.2}}\put(7,2){\circle*{0.2}}\put(7,3){\circle*{0.2}}\put(7,3){\circle*{0.2}}\put(7,4){\circle*{0.2}}
\put(7,5){\circle*{0.2}}\put(7,6){\circle*{0.2}}\put(7,7){\circle*{0.2}}\put(7,7){\circle*{0.2}}\put(7,8){\circle*{0.2}}\put(7,9){\circle*{0.2}}
\put(7,10){\circle*{0.2}}\put(7,11){\circle*{0.2}}\put(7,12){\circle*{0.2}}\put(7,12){\circle*{0.2}}\put(7,13){\circle*{0.2}}\put(7,14){\circle*{0.2}}
\put(7,15){\circle*{0.2}}\put(7,16){\circle*{0.2}}\put(7,17){\circle*{0.2}}\put(7,18){\circle*{0.2}}\put(7,19){\circle*{0.2}}\put(7,-1){\circle*{0.2}}

\put(8,0){\circle*{0.2}}\put(8,1){\circle*{0.2}}\put(8,2){\circle*{0.2}}\put(8,3){\circle*{0.2}}\put(8,3){\circle*{0.2}}\put(8,4){\circle*{0.2}}
\put(8,5){\circle*{0.2}}\put(8,6){\circle*{0.2}}\put(8,7){\circle*{0.2}}\put(8,7){\circle*{0.2}}\put(8,8){\circle*{0.2}}\put(8,9){\circle*{0.2}}
\put(8,10){\circle*{0.2}}\put(8,11){\circle*{0.2}}\put(8,12){\circle*{0.2}}\put(8,12){\circle*{0.2}}\put(8,13){\circle*{0.2}}\put(8,14){\circle*{0.2}}
\put(8,15){\circle*{0.2}}\put(8,16){\circle*{0.2}}\put(8,17){\circle*{0.2}}\put(8,18){\circle*{0.2}}\put(8,19){\circle*{0.2}}\put(8,-1){\circle*{0.2}}

\put(9,0){\circle*{0.2}}\put(9,1){\circle*{0.2}}\put(9,2){\circle*{0.2}}\put(9,3){\circle*{0.2}}\put(9,3){\circle*{0.2}}\put(9,4){\circle*{0.2}}
\put(9,5){\circle*{0.2}}\put(9,6){\circle*{0.2}}\put(9,7){\circle*{0.2}}\put(9,7){\circle*{0.2}}\put(9,8){\circle*{0.2}}\put(9,9){\circle*{0.2}}
\put(9,10){\circle*{0.2}}\put(9,11){\circle*{0.2}}\put(9,12){\circle*{0.2}}\put(9,12){\circle*{0.2}}\put(9,13){\circle*{0.2}}\put(9,14){\circle*{0.2}}
\put(9,15){\circle*{0.2}}\put(9,16){\circle*{0.2}}\put(9,17){\circle*{0.2}}\put(9,18){\circle*{0.2}}\put(9,19){\circle*{0.2}}\put(9,-1){\circle*{0.2}}

\put(10,0){\circle*{0.2}}\put(10,1){\circle*{0.2}}\put(10,2){\circle*{0.2}}\put(10,3){\circle*{0.2}}\put(10,3){\circle*{0.2}}\put(10,4){\circle*{0.2}}
\put(10,5){\circle*{0.2}}\put(10,6){\circle*{0.2}}\put(10,7){\circle*{0.2}}\put(10,7){\circle*{0.2}}\put(10,8){\circle*{0.2}}\put(10,9){\circle*{0.2}}
\put(10,10){\circle*{0.2}}\put(10,11){\circle*{0.2}}\put(10,12){\circle*{0.2}}\put(10,12){\circle*{0.2}}\put(10,13){\circle*{0.2}}\put(10,14){\circle*{0.2}}
\put(10,15){\circle*{0.2}}\put(10,16){\circle*{0.2}}\put(10,17){\circle*{0.2}}\put(10,18){\circle*{0.2}}\put(10,19){\circle*{0.2}}\put(10,-1){\circle*{0.2}}

\put(18,0){\circle*{0.2}}\put(18,1){\circle*{0.2}}\put(18,2){\circle*{0.2}}\put(18,3){\circle*{0.2}}\put(18,3){\circle*{0.2}}\put(18,4){\circle*{0.2}}
\put(18,5){\circle*{0.2}}\put(18,6){\circle*{0.2}}\put(18,7){\circle*{0.2}}\put(18,7){\circle*{0.2}}\put(18,8){\circle*{0.2}}\put(18,9){\circle*{0.2}}
\put(18,10){\circle*{0.2}}\put(18,11){\circle*{0.2}}\put(18,12){\circle*{0.2}}\put(18,12){\circle*{0.2}}\put(18,13){\circle*{0.2}}\put(18,14){\circle*{0.2}}
\put(18,15){\circle*{0.2}}\put(18,16){\circle*{0.2}}\put(18,17){\circle*{0.2}}\put(18,18){\circle*{0.2}}\put(18,19){\circle*{0.2}}\put(18,-1){\circle*{0.2}}

\put(19,0){\circle*{0.35}}\put(19,1){\circle*{0.25}}\put(19,2){\circle*{0.25}}\put(19,3){\circle*{0.25}}\put(19,3){\circle*{0.25}}\put(19,4){\circle*{0.25}}
\put(19,5){\circle*{0.25}}\put(19,6){\circle*{0.35}}\put(19,7){\circle*{0.2}}\put(19,7){\circle*{0.2}}\put(19,8){\circle*{0.2}}\put(19,9){\circle*{0.2}}
\put(19,10){\circle*{0.2}}\put(19,11){\circle*{0.2}}\put(19,12){\circle*{0.2}}\put(19,12){\circle*{0.2}}\put(19,13){\circle*{0.2}}\put(19,14){\circle*{0.2}}
\put(19,15){\circle*{0.2}}\put(19,16){\circle*{0.2}}\put(19,17){\circle*{0.2}}\put(19,18){\circle*{0.2}}\put(19,19){\circle*{0.2}}\put(19,-1){\circle*{0.2}}

\put(20,0){\circle*{0.2}}\put(20,1){\circle*{0.2}}\put(20,2){\circle*{0.25}}\put(20,3){\circle*{0.25}}\put(20,3){\circle*{0.25}}\put(20,4){\circle*{0.25}}
\put(20,5){\circle*{0.25}}\put(20,6){\circle*{0.25}}\put(20,7){\circle*{0.25}}\put(20,7){\circle*{0.2}}\put(20,8){\circle*{0.2}}\put(20,9){\circle*{0.2}}
\put(20,10){\circle*{0.2}}\put(20,11){\circle*{0.2}}\put(20,12){\circle*{0.2}}\put(20,12){\circle*{0.2}}\put(20,13){\circle*{0.2}}\put(20,14){\circle*{0.2}}
\put(20,15){\circle*{0.2}}\put(20,16){\circle*{0.2}}\put(20,17){\circle*{0.2}}\put(20,18){\circle*{0.2}}\put(20,19){\circle*{0.2}}\put(20,-1){\circle*{0.2}}

\put(21,0){\circle*{0.2}}\put(21,1){\circle*{0.2}}\put(21,2){\circle*{0.2}}\put(21,3){\circle*{0.2}}\put(21,3){\circle*{0.2}}\put(21,4){\circle*{0.25}}
\put(21,5){\circle*{0.25}}\put(21,6){\circle*{0.25}}\put(21,7){\circle*{0.25}}\put(21,7){\circle*{0.25}}\put(21,8){\circle*{0.25}}\put(21,9){\circle*{0.2}}
\put(21,10){\circle*{0.2}}\put(21,11){\circle*{0.2}}\put(21,12){\circle*{0.2}}\put(21,12){\circle*{0.2}}\put(21,13){\circle*{0.2}}\put(21,14){\circle*{0.2}}
\put(21,15){\circle*{0.2}}\put(21,16){\circle*{0.2}}\put(21,17){\circle*{0.2}}\put(21,18){\circle*{0.2}}\put(21,19){\circle*{0.2}}\put(21,-1){\circle*{0.2}}

\put(22,0){\circle*{0.2}}\put(22,1){\circle*{0.2}}\put(22,2){\circle*{0.2}}\put(22,3){\circle*{0.2}}\put(22,3){\circle*{0.2}}\put(22,4){\circle*{0.2}}
\put(22,5){\circle*{0.2}}\put(22,6){\circle*{0.35}}\put(22,7){\circle*{0.25}}\put(22,7){\circle*{0.25}}\put(22,8){\circle*{0.25}}\put(22,9){\circle*{0.35}}
\put(22,10){\circle*{0.2}}\put(22,11){\circle*{0.2}}\put(22,12){\circle*{0.2}}\put(22,12){\circle*{0.2}}\put(22,13){\circle*{0.2}}\put(22,14){\circle*{0.2}}
\put(22,15){\circle*{0.2}}\put(22,16){\circle*{0.2}}\put(22,17){\circle*{0.2}}\put(22,18){\circle*{0.2}}\put(22,19){\circle*{0.2}}\put(22,-1){\circle*{0.2}}

\put(23,0){\circle*{0.2}}\put(23,1){\circle*{0.2}}\put(23,2){\circle*{0.2}}\put(23,3){\circle*{0.2}}\put(23,3){\circle*{0.2}}\put(23,4){\circle*{0.2}}
\put(23,5){\circle*{0.2}}\put(23,6){\circle*{0.2}}\put(23,7){\circle*{0.2}}\put(23,7){\circle*{0.2}}\put(23,8){\circle*{0.2}}\put(23,9){\circle*{0.2}}
\put(23,10){\circle*{0.2}}\put(23,11){\circle*{0.2}}\put(23,12){\circle*{0.2}}\put(23,12){\circle*{0.2}}\put(23,13){\circle*{0.2}}\put(23,14){\circle*{0.2}}
\put(23,15){\circle*{0.2}}\put(23,16){\circle*{0.2}}\put(23,17){\circle*{0.2}}\put(23,18){\circle*{0.2}}\put(23,19){\circle*{0.2}}\put(23,-1){\circle*{0.2}}

\put(24,0){\circle*{0.2}}\put(24,1){\circle*{0.2}}\put(24,2){\circle*{0.2}}\put(24,3){\circle*{0.2}}\put(24,3){\circle*{0.2}}\put(24,4){\circle*{0.2}}
\put(24,5){\circle*{0.2}}\put(24,6){\circle*{0.2}}\put(24,7){\circle*{0.2}}\put(24,7){\circle*{0.2}}\put(24,8){\circle*{0.2}}\put(24,9){\circle*{0.2}}
\put(24,10){\circle*{0.2}}\put(24,11){\circle*{0.2}}\put(24,12){\circle*{0.2}}\put(24,12){\circle*{0.2}}\put(24,13){\circle*{0.2}}\put(24,14){\circle*{0.2}}
\put(24,15){\circle*{0.2}}\put(24,16){\circle*{0.2}}\put(24,17){\circle*{0.2}}\put(24,18){\circle*{0.2}}\put(24,19){\circle*{0.2}}\put(24,-1){\circle*{0.2}}

\put(25,0){\circle*{0.2}}\put(25,1){\circle*{0.2}}\put(25,2){\circle*{0.2}}\put(25,3){\circle*{0.2}}\put(25,3){\circle*{0.2}}\put(25,4){\circle*{0.2}}
\put(25,5){\circle*{0.2}}\put(25,6){\circle*{0.2}}\put(25,7){\circle*{0.2}}\put(25,7){\circle*{0.2}}\put(25,8){\circle*{0.2}}\put(25,9){\circle*{0.2}}
\put(25,10){\circle*{0.2}}\put(25,11){\circle*{0.2}}\put(25,12){\circle*{0.2}}\put(25,12){\circle*{0.2}}\put(25,13){\circle*{0.2}}\put(25,14){\circle*{0.2}}
\put(25,15){\circle*{0.2}}\put(25,16){\circle*{0.2}}\put(25,17){\circle*{0.2}}\put(25,18){\circle*{0.2}}\put(25,19){\circle*{0.2}}\put(25,-1){\circle*{0.2}}

\put(26,0){\circle*{0.2}}\put(26,1){\circle*{0.2}}\put(26,2){\circle*{0.2}}\put(26,3){\circle*{0.2}}\put(26,3){\circle*{0.2}}\put(26,4){\circle*{0.2}}
\put(26,5){\circle*{0.2}}\put(26,6){\circle*{0.2}}\put(26,7){\circle*{0.2}}\put(26,7){\circle*{0.2}}\put(26,8){\circle*{0.2}}\put(26,9){\circle*{0.2}}
\put(26,10){\circle*{0.2}}\put(26,11){\circle*{0.2}}\put(26,12){\circle*{0.2}}\put(26,12){\circle*{0.2}}\put(26,13){\circle*{0.2}}\put(26,14){\circle*{0.2}}
\put(26,15){\circle*{0.2}}\put(26,16){\circle*{0.2}}\put(26,17){\circle*{0.2}}\put(26,18){\circle*{0.2}}\put(26,19){\circle*{0.2}}\put(26,-1){\circle*{0.2}}

\end{picture}
\end{center}
\vspace{3mm} \caption{An $\epsilon$--standard trapezoid
$T_{\epsilon}(4,16,1,2)$, with $\epsilon_i=1$ for $i\in
\{8,12,14\}$, is shown on the left side of the figure. Together with
the standard trapezoid  $T(4,7,1,2)$ displayed in the right side,
they form an instance of Theorem \ref{thm-discrete-II}(b).}
\end{figure}

There is a third particular case in Theorem \ref{thm-discrete-II}
besides the $\epsilon$--standard trapezoids,  an example of which
is displayed in Figure \ref{case (c)}.
It is described by Theorem \ref{thm-discrete-II}(c) using the  notation $\{f(x,y)\}$,
where $f(x,y)$ is an inequality in the variables $x$ and $y$,
to denote the set of all points $(x,y)\in \Z^2$
satisfying the inequality $f(x,y)$.
Thus $\{x\leq y\}=\{(x,y)\in \Z^2: x\leq y\}$ for instance.

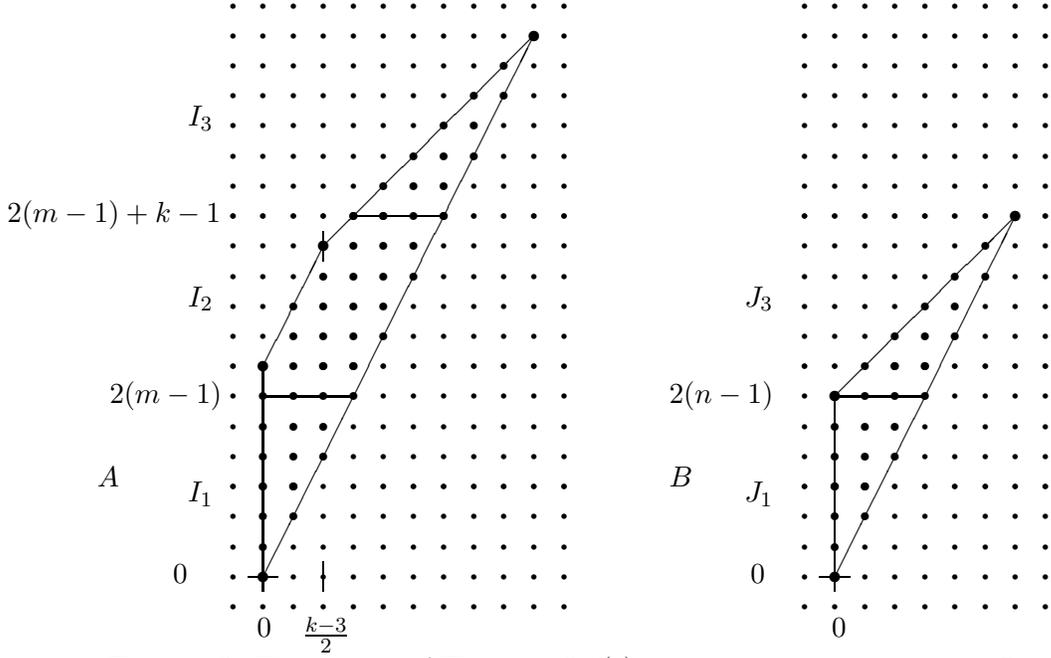
\begin{figure}[ht]\label{case (c)}
\setlength{\unitlength}{4mm}
\begin{center}
\begin{picture}(20,19)

\put(0,0){\line(0,1){7}} \put(0,7){\line(1,2){2}}
\put(2,11){\line(1,1){7}} \put(0,0){\line(1,2){9}}

\put(-5.5,3){$A$} \put(13.5,3){$B$}

\put(-.5,0){\line(1,0){1}} \put(18.5,0){\line(1,0){1}}

\put(19,0){\line(0,1){6}} \put(19,6){\line(1,1){6}}
\put(19,0){\line(1,2){6}}

\put(0,.5){\line(0,-1){1}} \put(-.2,-2){$0$} \put(18.9,-2){$0$}

\put(-2.5,9){$I_2$} \put(-2.5,2.5){$I_1$} \put(-2.5,15){$I_3$}

\put(16,9){$J_3$} \put(16,2.5){$J_1$}

\put(0,6){\line(1,0){3}} \put(19,6){\line(1,0){3}}

\put(3,12){\line(1,0){3}} \put(-5.1,5.8){$2(m-1)$}
\put(13.5,5.8){$2(n-1)$}

\put(-8.5,11.8){$2(m-1)+k-1$}

\put(19,.5){\line(0,-1){1}}

\put(16.2,-.2){$0$} \put(-3,-.2){$0$}
\put(1.3,-2.2){$\frac{k-3}{2}$} \put(2,11.5){\line(0,-1){1}}
\put(2,.5){\line(0,-1){1}}

\put(0,0){\circle*{0.35}}\put(0,1){\circle*{0.25}}\put(0,2){\circle*{0.25}}\put(0,3){\circle*{0.25}}\put(0,3){\circle*{0.25}}\put(0,4){\circle*{0.25}}
\put(0,5){\circle*{0.25}}\put(0,6){\circle*{0.25}}\put(0,7){\circle*{0.35}}\put(0,7){\circle*{0.2}}\put(0,8){\circle*{0.2}}\put(0,9){\circle*{0.2}}
\put(0,10){\circle*{0.2}}\put(0,11){\circle*{0.2}}\put(0,12){\circle*{0.2}}\put(0,12){\circle*{0.2}}\put(0,13){\circle*{0.2}}\put(0,14){\circle*{0.2}}
\put(0,15){\circle*{0.2}}\put(0,16){\circle*{0.2}}\put(0,17){\circle*{0.2}}\put(0,18){\circle*{0.2}}\put(0,19){\circle*{0.2}}\put(0,-1){\circle*{0.2}}

\put(-1,0){\circle*{0.2}}\put(-1,1){\circle*{0.2}}\put(-1,2){\circle*{0.2}}\put(-1,3){\circle*{0.2}}\put(-1,3){\circle*{0.2}}\put(-1,4){\circle*{0.2}}
\put(-1,5){\circle*{0.2}}\put(-1,6){\circle*{0.2}}\put(-1,7){\circle*{0.2}}\put(-1,7){\circle*{0.2}}\put(-1,8){\circle*{0.2}}\put(-1,9){\circle*{0.2}}
\put(-1,10){\circle*{0.2}}\put(-1,11){\circle*{0.2}}\put(-1,12){\circle*{0.2}}\put(-1,12){\circle*{0.2}}\put(-1,13){\circle*{0.2}}\put(-1,14){\circle*{0.2}}
\put(-1,15){\circle*{0.2}}\put(-1,16){\circle*{0.2}}\put(-1,17){\circle*{0.2}}\put(-1,18){\circle*{0.2}}\put(-1,19){\circle*{0.2}}\put(-1,-1){\circle*{0.2}}

\put(1,0){\circle*{0.2}}\put(1,1){\circle*{0.2}}\put(1,2){\circle*{0.25}}\put(1,3){\circle*{0.25}}\put(1,3){\circle*{0.25}}\put(1,4){\circle*{0.25}}
\put(1,5){\circle*{0.25}}\put(1,6){\circle*{0.25}}\put(1,7){\circle*{0.25}}\put(1,8){\circle*{0.25}}\put(1,9){\circle*{0.25}}
\put(1,10){\circle*{0.2}}\put(1,11){\circle*{0.2}}\put(1,12){\circle*{0.2}}\put(1,12){\circle*{0.2}}\put(1,13){\circle*{0.2}}\put(1,14){\circle*{0.2}}
\put(1,15){\circle*{0.2}}\put(1,16){\circle*{0.2}}\put(1,17){\circle*{0.2}}\put(1,18){\circle*{0.2}}\put(1,19){\circle*{0.2}}\put(1,-1){\circle*{0.2}}

\put(2,0){\circle*{0.2}}\put(2,1){\circle*{0.2}}\put(2,2){\circle*{0.2}}\put(2,3){\circle*{0.2}}\put(2,4){\circle*{0.25}}
\put(2,5){\circle*{0.25}}\put(2,6){\circle*{0.25}}\put(2,7){\circle*{0.25}}\put(2,7){\circle*{0.25}}\put(2,8){\circle*{0.25}}\put(2,9){\circle*{0.25}}
\put(2,10){\circle*{0.25}}\put(2,11){\circle*{0.35}}\put(2,12){\circle*{0.2}}\put(2,12){\circle*{0.2}}\put(2,13){\circle*{0.2}}\put(2,14){\circle*{0.2}}
\put(2,15){\circle*{0.2}}\put(2,16){\circle*{0.2}}\put(2,17){\circle*{0.2}}\put(2,18){\circle*{0.2}}\put(2,19){\circle*{0.2}}\put(2,-1){\circle*{0.2}}

\put(3,0){\circle*{0.2}}\put(3,1){\circle*{0.2}}\put(3,2){\circle*{0.2}}\put(3,3){\circle*{0.2}}\put(3,3){\circle*{0.2}}\put(3,4){\circle*{0.2}}
\put(3,5){\circle*{0.2}}\put(3,6){\circle*{0.25}}\put(3,7){\circle*{0.25}}\put(3,7){\circle*{0.25}}\put(3,8){\circle*{0.25}}\put(3,9){\circle*{0.25}}
\put(3,10){\circle*{0.25}}\put(3,11){\circle*{0.25}}\put(3,12){\circle*{0.25}}\put(3,12){\circle*{0.25}}\put(3,13){\circle*{0.2}}\put(3,14){\circle*{0.2}}
\put(3,15){\circle*{0.2}}\put(3,16){\circle*{0.2}}\put(3,17){\circle*{0.2}}\put(3,18){\circle*{0.2}}\put(3,19){\circle*{0.2}}\put(3,-1){\circle*{0.2}}

\put(4,0){\circle*{0.2}}\put(4,1){\circle*{0.2}}\put(4,2){\circle*{0.2}}\put(4,3){\circle*{0.2}}\put(4,3){\circle*{0.2}}\put(4,4){\circle*{0.2}}
\put(4,5){\circle*{0.2}}\put(4,6){\circle*{0.2}}\put(4,7){\circle*{0.2}}\put(4,7){\circle*{0.2}}\put(4,8){\circle*{0.25}}\put(4,9){\circle*{0.25}}
\put(4,10){\circle*{0.25}}\put(4,11){\circle*{0.25}}\put(4,12){\circle*{0.25}}\put(4,12){\circle*{0.25}}\put(4,13){\circle*{0.25}}\put(4,14){\circle*{0.2}}
\put(4,15){\circle*{0.2}}\put(4,16){\circle*{0.2}}\put(4,17){\circle*{0.2}}\put(4,18){\circle*{0.2}}\put(4,19){\circle*{0.2}}\put(4,-1){\circle*{0.2}}

\put(5,0){\circle*{0.2}}\put(5,1){\circle*{0.2}}\put(5,2){\circle*{0.2}}\put(5,3){\circle*{0.2}}\put(5,3){\circle*{0.2}}\put(5,4){\circle*{0.2}}
\put(5,5){\circle*{0.2}}\put(5,6){\circle*{0.2}}\put(5,7){\circle*{0.2}}\put(5,7){\circle*{0.2}}\put(5,8){\circle*{0.2}}\put(5,9){\circle*{0.2}}
\put(5,10){\circle*{0.25}}\put(5,11){\circle*{0.25}}\put(5,12){\circle*{0.25}}\put(5,13){\circle*{0.25}}\put(5,14){\circle*{0.25}}
\put(5,15){\circle*{0.2}}\put(5,16){\circle*{0.2}}\put(5,17){\circle*{0.2}}\put(5,18){\circle*{0.2}}\put(5,19){\circle*{0.2}}\put(5,-1){\circle*{0.2}}

\put(6,0){\circle*{0.2}}\put(6,1){\circle*{0.2}}\put(6,2){\circle*{0.2}}\put(6,3){\circle*{0.2}}\put(6,3){\circle*{0.2}}\put(6,4){\circle*{0.2}}
\put(6,5){\circle*{0.2}}\put(6,6){\circle*{0.2}}\put(6,7){\circle*{0.2}}\put(6,7){\circle*{0.2}}\put(6,8){\circle*{0.2}}\put(6,9){\circle*{0.2}}
\put(6,10){\circle*{0.2}}\put(6,11){\circle*{0.2}}\put(6,12){\circle*{0.25}}\put(6,12){\circle*{0.25}}\put(6,13){\circle*{0.25}}\put(6,14){\circle*{0.25}}
\put(6,15){\circle*{0.25}}\put(6,16){\circle*{0.2}}\put(6,17){\circle*{0.2}}\put(6,18){\circle*{0.2}}\put(6,19){\circle*{0.2}}\put(6,-1){\circle*{0.2}}

\put(7,0){\circle*{0.2}}\put(7,1){\circle*{0.2}}\put(7,2){\circle*{0.2}}\put(7,3){\circle*{0.2}}\put(7,3){\circle*{0.2}}\put(7,4){\circle*{0.2}}
\put(7,5){\circle*{0.2}}\put(7,6){\circle*{0.2}}\put(7,7){\circle*{0.2}}\put(7,7){\circle*{0.2}}\put(7,8){\circle*{0.2}}\put(7,9){\circle*{0.2}}
\put(7,10){\circle*{0.2}}\put(7,11){\circle*{0.2}}\put(7,12){\circle*{0.2}}\put(7,12){\circle*{0.2}}\put(7,13){\circle*{0.2}}\put(7,14){\circle*{0.25}}
\put(7,15){\circle*{0.25}}\put(7,16){\circle*{0.25}}\put(7,17){\circle*{0.2}}\put(7,18){\circle*{0.2}}\put(7,19){\circle*{0.2}}\put(7,-1){\circle*{0.2}}

\put(8,0){\circle*{0.2}}\put(8,1){\circle*{0.2}}\put(8,2){\circle*{0.2}}\put(8,3){\circle*{0.2}}\put(8,3){\circle*{0.2}}\put(8,4){\circle*{0.2}}
\put(8,5){\circle*{0.2}}\put(8,6){\circle*{0.2}}\put(8,7){\circle*{0.2}}\put(8,7){\circle*{0.2}}\put(8,8){\circle*{0.2}}\put(8,9){\circle*{0.2}}
\put(8,10){\circle*{0.2}}\put(8,11){\circle*{0.2}}\put(8,12){\circle*{0.2}}\put(8,12){\circle*{0.2}}\put(8,13){\circle*{0.2}}\put(8,14){\circle*{0.2}}
\put(8,15){\circle*{0.2}}\put(8,16){\circle*{0.25}}\put(8,17){\circle*{0.25}}\put(8,18){\circle*{0.2}}\put(8,19){\circle*{0.2}}\put(8,-1){\circle*{0.2}}

\put(9,0){\circle*{0.2}}\put(9,1){\circle*{0.2}}\put(9,2){\circle*{0.2}}\put(9,3){\circle*{0.2}}\put(9,3){\circle*{0.2}}\put(9,4){\circle*{0.2}}
\put(9,5){\circle*{0.2}}\put(9,6){\circle*{0.2}}\put(9,7){\circle*{0.2}}\put(9,7){\circle*{0.2}}\put(9,8){\circle*{0.2}}\put(9,9){\circle*{0.2}}
\put(9,10){\circle*{0.2}}\put(9,11){\circle*{0.2}}\put(9,12){\circle*{0.2}}\put(9,12){\circle*{0.2}}\put(9,13){\circle*{0.2}}\put(9,14){\circle*{0.2}}
\put(9,15){\circle*{0.2}}\put(9,16){\circle*{0.2}}\put(9,17){\circle*{0.2}}\put(9,18){\circle*{0.35}}\put(9,19){\circle*{0.2}}\put(9,-1){\circle*{0.2}}

\put(10,0){\circle*{0.2}}\put(10,1){\circle*{0.2}}\put(10,2){\circle*{0.2}}\put(10,3){\circle*{0.2}}\put(10,3){\circle*{0.2}}\put(10,4){\circle*{0.2}}
\put(10,5){\circle*{0.2}}\put(10,6){\circle*{0.2}}\put(10,7){\circle*{0.2}}\put(10,7){\circle*{0.2}}\put(10,8){\circle*{0.2}}\put(10,9){\circle*{0.2}}
\put(10,10){\circle*{0.2}}\put(10,11){\circle*{0.2}}\put(10,12){\circle*{0.2}}\put(10,12){\circle*{0.2}}\put(10,13){\circle*{0.2}}\put(10,14){\circle*{0.2}}
\put(10,15){\circle*{0.2}}\put(10,16){\circle*{0.2}}\put(10,17){\circle*{0.2}}\put(10,18){\circle*{0.2}}\put(10,19){\circle*{0.2}}\put(10,-1){\circle*{0.2}}

\put(18,0){\circle*{0.2}}\put(18,1){\circle*{0.2}}\put(18,2){\circle*{0.2}}\put(18,3){\circle*{0.2}}\put(18,3){\circle*{0.2}}\put(18,4){\circle*{0.2}}
\put(18,5){\circle*{0.2}}\put(18,6){\circle*{0.2}}\put(18,7){\circle*{0.2}}\put(18,7){\circle*{0.2}}\put(18,8){\circle*{0.2}}\put(18,9){\circle*{0.2}}
\put(18,10){\circle*{0.2}}\put(18,11){\circle*{0.2}}\put(18,12){\circle*{0.2}}\put(18,12){\circle*{0.2}}\put(18,13){\circle*{0.2}}\put(18,14){\circle*{0.2}}
\put(18,15){\circle*{0.2}}\put(18,16){\circle*{0.2}}\put(18,17){\circle*{0.2}}\put(18,18){\circle*{0.2}}\put(18,19){\circle*{0.2}}\put(18,-1){\circle*{0.2}}

\put(19,0){\circle*{0.35}}\put(19,1){\circle*{0.25}}\put(19,2){\circle*{0.25}}\put(19,3){\circle*{0.25}}\put(19,3){\circle*{0.25}}\put(19,4){\circle*{0.25}}
\put(19,5){\circle*{0.25}}\put(19,6){\circle*{0.35}}\put(19,7){\circle*{0.2}}\put(19,7){\circle*{0.2}}\put(19,8){\circle*{0.2}}\put(19,9){\circle*{0.2}}
\put(19,10){\circle*{0.2}}\put(19,11){\circle*{0.2}}\put(19,12){\circle*{0.2}}\put(19,12){\circle*{0.2}}\put(19,13){\circle*{0.2}}\put(19,14){\circle*{0.2}}
\put(19,15){\circle*{0.2}}\put(19,16){\circle*{0.2}}\put(19,17){\circle*{0.2}}\put(19,18){\circle*{0.2}}\put(19,19){\circle*{0.2}}\put(19,-1){\circle*{0.2}}

\put(20,0){\circle*{0.2}}\put(20,1){\circle*{0.2}}\put(20,2){\circle*{0.25}}\put(20,3){\circle*{0.25}}\put(20,3){\circle*{0.25}}\put(20,4){\circle*{0.25}}
\put(20,5){\circle*{0.25}}\put(20,6){\circle*{0.25}}\put(20,7){\circle*{0.25}}\put(20,7){\circle*{0.2}}\put(20,8){\circle*{0.2}}\put(20,9){\circle*{0.2}}
\put(20,10){\circle*{0.2}}\put(20,11){\circle*{0.2}}\put(20,12){\circle*{0.2}}\put(20,12){\circle*{0.2}}\put(20,13){\circle*{0.2}}\put(20,14){\circle*{0.2}}
\put(20,15){\circle*{0.2}}\put(20,16){\circle*{0.2}}\put(20,17){\circle*{0.2}}\put(20,18){\circle*{0.2}}\put(20,19){\circle*{0.2}}\put(20,-1){\circle*{0.2}}

\put(21,0){\circle*{0.2}}\put(21,1){\circle*{0.2}}\put(21,2){\circle*{0.2}}\put(21,3){\circle*{0.2}}\put(21,3){\circle*{0.2}}\put(21,4){\circle*{0.25}}
\put(21,5){\circle*{0.25}}\put(21,6){\circle*{0.25}}\put(21,7){\circle*{0.25}}\put(21,7){\circle*{0.25}}\put(21,8){\circle*{0.25}}\put(21,9){\circle*{0.2}}
\put(21,10){\circle*{0.2}}\put(21,11){\circle*{0.2}}\put(21,12){\circle*{0.2}}\put(21,12){\circle*{0.2}}\put(21,13){\circle*{0.2}}\put(21,14){\circle*{0.2}}
\put(21,15){\circle*{0.2}}\put(21,16){\circle*{0.2}}\put(21,17){\circle*{0.2}}\put(21,18){\circle*{0.2}}\put(21,19){\circle*{0.2}}\put(21,-1){\circle*{0.2}}

\put(22,0){\circle*{0.2}}\put(22,1){\circle*{0.2}}\put(22,2){\circle*{0.2}}\put(22,3){\circle*{0.2}}\put(22,3){\circle*{0.2}}\put(22,4){\circle*{0.2}}
\put(22,5){\circle*{0.2}}\put(22,6){\circle*{0.25}}\put(22,7){\circle*{0.25}}\put(22,7){\circle*{0.25}}\put(22,8){\circle*{0.25}}\put(22,9){\circle*{0.25}}
\put(22,10){\circle*{0.2}}\put(22,11){\circle*{0.2}}\put(22,12){\circle*{0.2}}\put(22,12){\circle*{0.2}}\put(22,13){\circle*{0.2}}\put(22,14){\circle*{0.2}}
\put(22,15){\circle*{0.2}}\put(22,16){\circle*{0.2}}\put(22,17){\circle*{0.2}}\put(22,18){\circle*{0.2}}\put(22,19){\circle*{0.2}}\put(22,-1){\circle*{0.2}}

\put(23,0){\circle*{0.2}}\put(23,1){\circle*{0.2}}\put(23,2){\circle*{0.2}}\put(23,3){\circle*{0.2}}\put(23,3){\circle*{0.2}}\put(23,4){\circle*{0.2}}
\put(23,5){\circle*{0.2}}\put(23,6){\circle*{0.2}}\put(23,7){\circle*{0.2}}\put(23,7){\circle*{0.2}}\put(23,8){\circle*{0.25}}\put(23,9){\circle*{0.25}}
\put(23,10){\circle*{0.25}}\put(23,11){\circle*{0.2}}\put(23,12){\circle*{0.2}}\put(23,12){\circle*{0.2}}\put(23,13){\circle*{0.2}}\put(23,14){\circle*{0.2}}
\put(23,15){\circle*{0.2}}\put(23,16){\circle*{0.2}}\put(23,17){\circle*{0.2}}\put(23,18){\circle*{0.2}}\put(23,19){\circle*{0.2}}\put(23,-1){\circle*{0.2}}

\put(24,0){\circle*{0.2}}\put(24,1){\circle*{0.2}}\put(24,2){\circle*{0.2}}\put(24,3){\circle*{0.2}}\put(24,3){\circle*{0.2}}\put(24,4){\circle*{0.2}}
\put(24,5){\circle*{0.2}}\put(24,6){\circle*{0.2}}\put(24,7){\circle*{0.2}}\put(24,7){\circle*{0.2}}\put(24,8){\circle*{0.2}}\put(24,9){\circle*{0.2}}
\put(24,10){\circle*{0.25}}\put(24,11){\circle*{0.25}}\put(24,12){\circle*{0.2}}\put(24,12){\circle*{0.2}}\put(24,13){\circle*{0.2}}\put(24,14){\circle*{0.2}}
\put(24,15){\circle*{0.2}}\put(24,16){\circle*{0.2}}\put(24,17){\circle*{0.2}}\put(24,18){\circle*{0.2}}\put(24,19){\circle*{0.2}}\put(24,-1){\circle*{0.2}}

\put(25,0){\circle*{0.2}}\put(25,1){\circle*{0.2}}\put(25,2){\circle*{0.2}}\put(25,3){\circle*{0.2}}\put(25,3){\circle*{0.2}}\put(25,4){\circle*{0.2}}
\put(25,5){\circle*{0.2}}\put(25,6){\circle*{0.2}}\put(25,7){\circle*{0.2}}\put(25,7){\circle*{0.2}}\put(25,8){\circle*{0.2}}\put(25,9){\circle*{0.2}}
\put(25,10){\circle*{0.2}}\put(25,11){\circle*{0.2}}\put(25,12){\circle*{0.35}}\put(25,12){\circle*{0.2}}\put(25,13){\circle*{0.2}}\put(25,14){\circle*{0.2}}
\put(25,15){\circle*{0.2}}\put(25,16){\circle*{0.2}}\put(25,17){\circle*{0.2}}\put(25,18){\circle*{0.2}}\put(25,19){\circle*{0.2}}\put(25,-1){\circle*{0.2}}

\put(26,0){\circle*{0.2}}\put(26,1){\circle*{0.2}}\put(26,2){\circle*{0.2}}\put(26,3){\circle*{0.2}}\put(26,3){\circle*{0.2}}\put(26,4){\circle*{0.2}}
\put(26,5){\circle*{0.2}}\put(26,6){\circle*{0.2}}\put(26,7){\circle*{0.2}}\put(26,7){\circle*{0.2}}\put(26,8){\circle*{0.2}}\put(26,9){\circle*{0.2}}
\put(26,10){\circle*{0.2}}\put(26,11){\circle*{0.2}}\put(26,12){\circle*{0.2}}\put(26,12){\circle*{0.2}}\put(26,13){\circle*{0.2}}\put(26,14){\circle*{0.2}}
\put(26,15){\circle*{0.2}}\put(26,16){\circle*{0.2}}\put(26,17){\circle*{0.2}}\put(26,18){\circle*{0.2}}\put(26,19){\circle*{0.2}}\put(26,-1){\circle*{0.2}}

\end{picture}
\end{center}
\vspace{3mm} \caption{Illustration of Theorem
\ref{thm-discrete-II}(c) with $m=4$, $n=4$ and $k=7.$}
\end{figure}

\begin{theorem}\label{thm-discrete-II} Let $A,\,B\subseteq \R^2$ be
finite two-dimensional subsets with $0\in A\cap B$.  Let $m$ be the
maximal number of points in $A$ contained on a horizontal line and
let $n$ be the maximal number of points in $B$ contained on a
horizontal line. Suppose $m,\,n\geq 2$. Then,
\begin{equation}\label{eq:eq-discreteII}
|A+B|\ge \left(\frac{|A|}{m}+\frac{|B|}{n}-1\right)(m+n-1).
\end{equation}
Moreover, if equality holds in \eqref{eq:eq-discreteII}, then, up
to a linear transformation  of the form $(x,y)\mapsto(\alpha
x+\gamma y,\beta y)$,
 where $\alpha,\beta,\gamma\in \R$ and $\alpha$ and $\beta$ non-zero, one of the following holds:
\begin{itemize}
\item[(a)]
$A$ and $B$ are standard trapezoids $T(m,h,c,d)$ and $T(n,h',c,d)$
with common slopes $c$ and $d$.
\item[(b)] $A$ is an $\epsilon$--standard trapezoid $T_{\epsilon}(m,h,c,d)$ and $B$ is
a standard trapezoid $T(n, h', c,d)$ with $h'=(n-1)d+1$, or the same holds with the roles of
$A$ and $B$ (and $m$ and $n$)
reversed.
\item[(c)]  Up to translation, 
$$
A=\{x\geq 0 \}\cap  \{y\geq 2x\} \cap  \{ y\leq
x+2m+\frac{k-5}{2}\} \cap   \{ y\leq 2x+2m-1 \}
$$
and
$$
B=\{x\geq 0 \}\cap  \{y\geq 2x\} \cap  \{ y\leq x+2n-2\},\nn
$$
where $k\in \mathbb N$ is odd, or the same holds with the roles of
$A$ and $B$ (and $m$ and $n$) reversed.
\end{itemize}
\end{theorem}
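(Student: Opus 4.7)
My approach reduces the equality case of Theorem \ref{thm-discrete-II} to Theorem \ref{thm:invdisc} via a horizontal compression argument, followed by a case analysis on the residual freedom that distinguishes conclusions (a), (b), (c).

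First, I would establish \eqref{eq:eq-discreteII} via left-compression of horizontal sections. For $X\in\{A,B\}$, writing $X_c:=X\cap\{y=c\}$ for the horizontal section at height $c$, define $\tilde X:=\bigcup_c\{(i,c):0\le i\le |X_c|-1\}$. Then $|\tilde X|=|X|$, the multiset of horizontal section sizes is preserved, and $\tilde A$ is covered by exactly $m$ vertical lines (respectively $\tilde B$ by $n$), since the widest row of $\tilde A$ is $\{0,1,\ldots,m-1\}\times\{c^*\}$ for some $c^*$. A level-by-level estimate gives $|\tilde A+\tilde B|\le |A+B|$: the compressed fiber $(\tilde A+\tilde B)_t$ is the left-aligned integer interval of length $\max_{c+c'=t}(|A_c|+|B_{c'}|-1)$, while the corresponding fiber of $A+B$ contains each $A_c+B_{c'}$ with $c+c'=t$, and each such summand has cardinality at least $|A_c|+|B_{c'}|-1$ by Theorem \ref{CDT-forZ}. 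Applying inequality \eqref{eq:tightdiscretebis} to the compressed pair $\tilde A,\tilde B$ then yields \eqref{eq:eq-discreteII}.

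Under the equality hypothesis the chain $\mathrm{RHS}\le|\tilde A+\tilde B|\le|A+B|=\mathrm{RHS}$ collapses, so in particular $\tilde A,\tilde B$ achieve equality in \eqref{eq:tightdiscretebis} and Theorem \ref{thm:invdisc} applies: after a linear transformation of the form $(x,y)\mapsto(\alpha^{-1}x,\beta^{-1}y)$, $\tilde A=T(m,h,c,d)$ and $\tilde B=T(n,h',c,d)$ with common slopes. Equality in the compression step $|\tilde A+\tilde B|=|A+B|$ then forces, for every admissible pair $(c,c')$, both $|A_c+B_{c'}|=|A_c|+|B_{c'}|-1$ and a nesting condition: at each level $t$, all intervals $A_c+B_{c'}$ with $c+c'=t$ lie inside the largest one. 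The first identity, via Theorem \ref{CDT-forZ}, makes every nontrivial horizontal section of $A$ or $B$ an arithmetic progression, and sewing together adjacent levels forces a single common difference $\delta$. A further shearing $(x,y)\mapsto(x+\gamma y,y)$ (absorbed into the final linear transformation in the statement) normalizes $\delta$ to $1$ and reduces the remaining data to the sequences of left endpoints $(s_c),(s'_{c'})$ of the horizontal sections, with $\tilde A,\tilde B$ corresponding to the zero sequences.

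The concluding step is a case analysis on the shift sequences $(s_c),(s'_{c'})$ compatible with the nesting condition. When nesting is binding at every level, the only admissible sequences are $s_c\equiv s'_{c'}\equiv 0$, recovering case (a). When $h'=(n-1)d+1$, so that $B$ has its maximal horizontal row of length $n$ realized at a single height and a particular trapezoidal shape, nesting becomes binding only on a proper subcollection of levels; the admissible shift sequences for $A$ are then exactly those produced by a $(0,1)$-sequence $\epsilon$ with support in $[md,h-c-1]$ and with consecutive ones separated by at least $\max\{c,d\}$, recovering the $\epsilon$--standard trapezoid and giving case (b) (the case with the roles of $A,B$ swapped being symmetric). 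Finally, a separate sporadic configuration in which $A,B$ share a non-vertical direction of parallel sides produces the explicit family of case (c), parameterized by an odd integer $k$. The main obstacle is precisely this case analysis: verifying both that the stated constraints on $\epsilon$ (support and spacing) and on $k$ are exactly necessary and sufficient for the nesting, and that no further admissible shift patterns arise. A natural way to organize it is to track, for each level $t$, the pair $(c,c')$ realizing $\max(|A_c|+|B_{c'}|-1)$ and to propagate the nesting constraints along the boundary of this ``support diagram'' to pin down the allowable $s_c$ and $s'_{c'}$.
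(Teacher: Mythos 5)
Your first half — the horizontal compression $\tilde A,\tilde B$, the chain $\mathrm{RHS}\le|\tilde A+\tilde B|\le|A+B|$, the reduction of the compressed pair to Theorem \ref{thm:invdisc}, and the extraction of the conditions $|A_c+B_{c'}|=|A_c|+|B_{c'}|-1$ plus the nesting of all $A_c+B_{c'}$ with $c+c'=t$ inside a common interval — is exactly the paper's argument, and it is correct. (One small slip: the shearing $(x,y)\mapsto(x+\gamma y,y)$ does not normalize the common difference $\delta$ to $1$; a horizontal rescaling does that, and the shear is used separately to normalize the left endpoints.)

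The genuine gap is that the entire second half — the case analysis that actually yields (a), (b), (c) — is not carried out; you yourself flag it as ``the main obstacle'' and offer only a plan. This is where essentially all the work in the paper lives. Two ingredients are missing. First, the structural device that makes the analysis tractable: cutting $A$ and $B$ at rows of maximal width into $A^{-},B^{-}$ and $A^{+},B^{+}$ and showing (via a short sumset-union count) that each of these sub-pairs is again extremal for \eqref{eq:eq-discreteII}. This reduces everything to the case where one of the two slopes is zero, which is then handled completely (your case (b) condition $h'=(n-1)d+1$, i.e.\ $|J_2|=1$, appears there, but the verification that the admissible shift sequences are exactly the $(0,1)$-sequences $\epsilon$ with the stated support and spacing requires the tightness/modular-rule bookkeeping, not just the nesting heuristic). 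Second, and more seriously, your account of case (c) — ``a separate sporadic configuration in which $A,B$ share a non-vertical direction of parallel sides'' — does not describe the actual mechanism and gives no route to the odd parameter $k$. In the paper, case (c) arises from gluing the bottom-half and top-half structures: the bottom half forces at most one $\epsilon_i=1$ in every $d$ consecutive terms while the top half forces at most one $\epsilon_i=0$ in every $|c|$ consecutive terms, which is only consistent when $d=|c|=2$ and $\epsilon$ alternates; the parity of the alternation is what makes $k$ odd. Without this gluing argument (the paper's Claim 3 and Cases 1--5), you cannot certify that cases (a), (b), (c) are exhaustive, nor derive the constraints on $\epsilon$ and $k$. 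As written, the proposal establishes the inequality and the reduction but not the characterization of equality.
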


\begin{proof}
For a finite set $X\subseteq \R^2$ and $i\in \R$, we let $X_i=X\cap
\{(x,y)\in \R^2\mid y=i\}$ denote the intersection of $X$ with the
horizontal line defined by $y=i$. Recall that $\pi':\R^2\rightarrow
\R$ denotes the horizontal projection map onto the vertical axis.
Since $A$ and $B$ are two-dimensional, we have $|\pi'(A)|\geq 2$ and
$|\pi'(B)|\geq 2$.

For $X\subseteq \R^2$, we let $\mathsf c(X)\subseteq \R^2$ denote the
subset with $\pi'(X)=\pi'(\mathsf c(X))$ such that $\mathsf
c(X)_i=\{(0,i),(1,i),\ldots,(|X_i|-1,i)\}$ for $i\in \pi'(X)$. Then
$\mathsf c(X)$ is the {\it horizontal compression} of $X$.
The
following properties are easily observed regarding $\mathsf c(X)$:
$$
|\mathsf c (X)|=|X|,\quad |\pi(\mathsf c (X))|=\max_i |X_i|\;\und\; \pi' (\mathsf c (X))=\pi'(X).
$$
Additionally, we have  \ber \nn  |A+B|&=& \Summ{t\in
\R}\,|\bigcup_{i+j=t}(A_i+B_j)|\\\label{B1} &\geq& \Summ{t\in
\R}\max_{i+j=t}\{|A_i+B_j|\}\\\label{B2}&\geq& \Summ{t\in
\R}\max_{i+j=t}\{|A_i|+|B_j|-1\} \\&=&\label{B3} \Summ{t\in
\R}\,|\bigcup_{i+j=t}(\mathsf c(A)_i+\mathsf c(B)_j)|= |\mathsf
c(A)+\mathsf c(B)|, \eer where \eqref{B2} follows from Theorem \ref{CDT-forZ}.

Since $|\pi(\mathsf c (A))|=\underset{i}{\max}\, |A_i|=m$ and $|\pi(\mathsf c (B))|=\underset{i}{\max }\,|B_i|=n$,
inequality \eqref{eq:eq-discreteII} now follows from
\eqref{eq:tightdiscrete}.
Moreover, if equality holds in \eqref{eq:eq-discreteII}, then $\mathsf
c(A)$ and $\mathsf c(B)$ are an extremal pair for \eqref{eq:tightdiscrete}. Thus,  in view Theorem \ref{thm:invdisc}, by applying an appropriate  linear transformation of the form $(x,y)\mapsto (x,\beta^{-1}y)$, we can w.l.o.g. re-scale the vertical axis so that $\mathsf c(A)$ and $\mathsf c(B)$
are   standard trapezoids $T(m,h,c,d)$ and $T(n,h',c,d)$ with common
slopes $c$ and $d$. Moreover, since the horizontal sections of $\mathsf c(A)$ are arithmetic progressions with difference $(1,0)$, and since,  by definition of a standard trapezoid, the vertical sections of $\mathsf c(A)$ are arithmetic progressions with difference $(0,1)$, it follows that $\mathsf c(A)\subseteq \Z^2$, and likewise $\mathsf c(B)\subseteq \Z^2$, whence $c,\,d\in\Z$.

Since $\mathsf c(A)$ and $\mathsf c(B)$ are horizontally compressed,
we see that $$c\leq 0\und d\geq 0,$$ so that
  $h=|\pi'(A)|$ and $h'=|\pi'(B)|$, and   by
translation we assume $\pi'(A)= \{0,1,\ldots,h-1\}$
and $\pi'(B)=\{0,1,\ldots,h'-1\}$. This assumption will remain in place for the remainder of the proof, and all affine transformations employed in the proof will preserve this assumption.

Let \begin{align}\nn& I_1=[0,(m-1)d], \; I_2=[(m-1)d, h-1+(m-1)c],\;
I_3=[h-1+(m-1)c ,h-1],\\& J_1=[0,(n-1)d],\; J_2=[(n-1)d,
h'-1+(n-1)c],\; J_3=[h'-1+(n-1)c ,h'-1].\nn\end{align} Then
\begin{equation}\label{eq:cardinalities}
|A_i|=\left\{
\begin{array}{ll}
 \lfloor\frac{i}{d}\rfloor+1, & i\in I_1 \\
  m, & i\in I_2 \\
  \lfloor\frac{-i+h-1}{-c}\rfloor+1, & i\in I_3
  \end{array}
  \right.\;
  \quad\und\quad
|B_j|=\left\{
\begin{array}{ll}
\lfloor\frac{j}{d}\rfloor+1, & j\in J_1 \\
n, & j\in J_2 \\
\lfloor\frac{-j+h'-1}{-c}\rfloor+1, & j\in J_3.
\end{array}
\right. \end{equation}

Let us call a pair $(k,l)\in [0,h-1]\times [0,h'-1]$  {\it tight} if it attains the maximum in
\eqref{B2} for $t=k+l$. Since equality holds in \eqref{B1} and \eqref{B2}, it
follows, for $t\in \{0,1,\ldots,h+h'-2\}$, that
\begin{align}\label{kl-card}&|A_k+B_l|=|A_k|+|B_l|-1\und
\\\label{kl-subset} &\bigcup_{i+j=t}(A_{i}+B_j)=
A_k+A_l,\end{align} for any tight pair $(k,l)$ with $k+l=t$.
In view
of \eqref{eq:cardinalities}, if $(k,l)\in [0,md-1]\cap (I_1\cup I_2)\times [0,nd-1]\cap (J_1\cup J_2)$ is a pair with $k+l\equiv \theta\mod d$ and $k,\,l\in [0,\theta]+d\Z$, where $\theta\in [0,d-1]$, then $(k,l)$ is tight.
 We refer to this as the {\it modular rule} for tightness. It will be most often applied in the case of pairs $(k,l)\in I_1\times J_1\subseteq [0,md-1]\cap (I_1\cup I_2)\times [0,nd-1]\cap (J_1\cup J_2)$. In particular, a pair $(k,l)\in I_1\times J_1$ is tight whenever
$k\equiv 0 \mod d$ or $l\equiv 0\mod d$. A similar statement holds for pairs
 $(k,l)\in I_3\times J_3$, though, by symmetry, we will have little need of an explicit formulation for these cases.  Also, every pair
$(k,l)\in I_2\times J_2$ is tight.

Our next goal is to show each $A_i$ and $B_j$ is an arithmetic progression with common difference $(\alpha,0)$, for some $\alpha>0$.
First consider $i\in I_2$ and $j\in J_2$. Then $|A_i|=m\geq 2$ and $|B_j|=n\geq 2$. Consequently, in view of \eqref{kl-card} and Theorem \ref{CDT-forZ}, it follows that all $A_i$ with $i\in I_2$ and $B_j$ with $j\in J_2$ are arithmetic progressions
of common difference (say) $(\alpha,0)$, where w.l.o.g. $\alpha>0$. Now consider $i\in I_1$. If $|A_i|=1$, then $A_i$ is trivially an arithmetic progression with any difference, so assume $|A_i|\geq 2$. Then the pair $(i,(n-1)d)\in I_1\times J_1$ is tight by the modular rule for tightness, whence Theorem \ref{CDT-forZ} and \eqref{kl-card}  show $A_i$ is also an arithmetic progression with difference $(\alpha,0)$. The same argument works for $j\in J_1$, and symmetrical arguments handle the cases $i\in I_3$ and $j\in J_3$, so all $A_i$ and $B_j$ are arithmetic progressions with common difference $(\alpha,0)$, as claimed. By re-scaling the horizontal axis as need be, we may w.l.o.g. assume $\alpha=1$.

Let $a_i,\,a'_i\in \Z$ be, respectively, the minimal and maximal
first coordinate of an element from $A_i$ and let $b_j,\,b'_j\in \Z$
be, respectively, the minimal and maximal first coordinate of an
element from $B_j$, for $i\in [0,h-1]$ and $j\in [0,h'-1]$.

We first consider the case $|I_3|=|J_3|=1$, that is, $c=0$.

\begin{claim}\label{claim:c=0} Suppose that $c=0$. Then, up to an
appropriate linear transformation of the form $(x,y)\mapsto
(x-\alpha y ,y)$, one of the following conditions holds:

{\rm (i)}   $A$ and $B$ are standard trapezoids with common slopes $0$ and
$d$, or

{\rm (ii)} $A$ is an $\epsilon$--standard trapezoid
$T_{\epsilon}(m,h,0,\pm d)$ and $B$ is a standard trapezoid
$T(n,(n-1)d+1,0,\pm d)$ with common slopes $0$ and $\pm d$, or the same holds with the roles of $A$ and $B$ (and $m$ and $n$) reversed.
\end{claim}

\begin{proof} As both descriptions (i) and (ii) are invariant of translation and horizontal reflection, we can translate $A$ and $B$ as need be, and it suffices to prove the theorem for the horizontal reflections of $A$ and $B$. Thus we first show that, by applying an appropriate linear transformation of the form $(x,y)\mapsto
(\pm (x-\alpha y) ,y)$ and translating, we may assume that
\begin{equation}\label{eq:aligned}
a_i=b_j=0 \quad \mbox{ for all } i\in I_1\und j\in J_1.
\end{equation}
Note that if the coefficient of $x$ is $-1$, then the affine transformation involves a horizontal reflection which swaps the roles of the $a'_i$ and $a_i$, etc. Thus \eqref{eq:aligned} is equivalent to saying that either the sequences $a_i$ and $b_i$ or the sequences $a'_i$ and $b'_i$ are arithmetic progressions with common difference $\alpha\in \R$.

Suppose that $d\ge 1$, since otherwise \eqref{eq:aligned} is clear. By the modular rule and  \eqref{kl-subset}, it follows, for all $i,\,i'\in [0,m-1]$ and
$j,\,j'\in [0,n-1]$ with $i+j=i'+j'$, that
$$
a_{id}+b_{jd}=a_{i'd}+b_{j'd}.
$$
Hence the sequences $(a_{id}:\,
i\in [0,m-1])$ and $(b_{jd}:\, j\in [0,n-1])$ are arithmetic
progressions with the same common difference in view of $m,\,n\geq 2$. By an appropriate
transformation of the form $(x,y)\mapsto (x-\alpha
y+\alpha',y)$, we may assume that the difference is zero and thus
\be\label{viiteet} a_{id}=b_{jd}=0\quad \mbox{  for all }\;i\in [0,m-1] \und j\in [0,n-1].\ee
Moreover, the sequences $(a_{id}':\, i\in [0,m-1])$ and $(b_{jd}':\,
j\in [0,n-1])$ are then arithmetic progressions with  difference one. Now
\eqref{eq:aligned} holds if $d=1$.

Suppose that $d\ge 2$. By the modular rule and \eqref{kl-subset}, we have
\be\label{veetiit}
A_{0}+B_{j}\supseteq A_1+B_{j-1}\quad \mbox{ for } \;j\in J_1\setminus \{0\}.
\ee
Moreover, if $j\not\equiv 0\mod d$, then the pair $(1,j-1)$ is also tight by the modular rule, whence \eqref{kl-subset} shows equality must hold in \eqref{veetiit}. Hence $a_0+b_j=a_1+b_{j-1}$ for $J_1\setminus \{0\}$ with $j\not\equiv 0\mod d$, so that the difference $b_j-b_{j-1}$ is constant,
being equal to $\delta:=a_1-a_0$. For $J_1\setminus \{0\}$ with $j\equiv 0\mod d$, we note that
$A_1+B_{j-1}$ is an
arithmetic progression whose length is only one  less than the arithmetic
progression $A_0+B_{j}$, whence \eqref{veetiit} instead implies $a_1+b_{j-1}\in a_0+b_{j}+\{0,1\}$, so that $b_j-b_{j-1}\in \{\delta,\delta-1\}$ for $J_1\setminus \{0\}$ with  $j\equiv 0\mod d$.

 Repeating these arguments with the roles of $A$ and $B$
swapped, we likewise conclude (recall $d\geq 2$) that
$a_i-a_{i-1}=b_1-b_0=a_1-a_0=\delta$ for $i\not\equiv 0\mod d$,
while $a_i-a_{i-1}\in \{\delta,\delta-1\}$ for $i\equiv 0\mod d$.
The special case with $j=d$ gives, in view of \eqref{viiteet},
$$d\delta=(a_0+\delta)+(b_0+(d-1)\delta)=a_1+b_{d-1}\in a_0+b_d+ \{0,1\}=\{0,1\}.$$
Thus $\delta=0$ or $\delta=\frac{1}{d}$. If $\delta=0$, then
$(a_i:\,i\in I_1)$ and $(b_i:\,i\in J_1)$ are arithmetic progressions of
common difference $0$. If $\delta=\frac{1}{d}$, then
$a'_i=\frac{i}{d}$ and $b'_j=\frac{j}{d}$ for $i\in I_1$ and $j\in
J_1$, whence $(a'_i:\,i\in I_1)$ and $(b'_i:\,i\in J_1)$ are arithmetic
progressions with common difference $\frac{1}{d}$. In either case,
\eqref{eq:aligned} is established as previously explained. We now assume \eqref{eq:aligned} holds for $A$ and $B$ and consider three cases.

Moreover, and this is important for later applications of the claim, when $|J_2|=1$, we show that the conclusion of the claim holds for  {\it any} transformation of the given form for which \eqref{eq:aligned} holds, and thus also for the horizontal reflection of such a transform.

\subsection*{Case 1: }  $|I_2|=|J_2|=1$. Then $A$ and $B$ are the standard
trapezoids $T(m,(m-1)d+1,0,d)$ and $T(n,(n-1)d+1,0,d)$ respectively,
giving part (i) of the claim.

\subsection*{Case 2: }   $|I_2|\ge 2$ and $|J_2|\ge 2$.
Since all pairs $(i,j)\in I_2\times J_2$ are tight, it follows from \eqref{kl-subset} that $A_i+A_j=A_{i'}+A_{j'}$ for all $i,\,i'\in I_2$ and $j,\,j'\in J_2$. Thus, in view of  $|I_2|\ge 2$ and $|J_2|\ge 2$, it follows that the sequences $(a_i:\, i\in
I_2)$ and  $(b_j:\, j\in J_2)$ are arithmetic progressions with the same
common difference (say) $\alpha$.
Thus, if $d=0$, then  $|I_1|=|J_1|=1$ and Claim 1(i) follows by applying a linear transformation of the form $(x,y)\mapsto
(x-\alpha y ,y)$.

So consider the case  $d\ge 1$. Then $0,1\in I_1$, whence the modular rule implies the pair $(1,(n-1)d)$ is tight, in which case  \eqref{kl-subset} implies
\be\label{stable}
A_0+B_{(n-1)d+1}\subseteq A_{1}+B_{(n-1)d}.
\ee
In view of $|J_2|\geq 2$, we have $|B_{(n-1)d}|=|B_{(n-1)d+1}|$; moreover, $|A_0|\in \{|A_{1}|,\,|A_{1}|-1\}$ with $|A_0|=|A_{1}|-1$ only possibly when $d=1$. Thus, as all sets $A_i$ and $B_j$ are arithmetic progressions of common difference, we see that
$$|A_0+B_{(n-1)d+1}|\in \{|A_{1}+B_{(n-1)d}|,\,|A_{1}+B_{(n-1)d}|-1\}$$ with equality only possible when $d=1$. Consequently,
\eqref{stable}  implies $a_0+b_{(n-1)d+1}\in a_{1}+b_{(n-1)d}+\{0,1\}$ with $a_0+b_{(n-1)d+1}= a_{1}+b_{(n-1)d}+1$ only possible for $d=1$. In view of \eqref{eq:aligned} and the definition of $\alpha$, this means $\alpha=b_{(n-1)d+1}-b_{(n-1)d}\in\{0,1\}$ with $\alpha=1$ only possible for $d=1$. If $\alpha=0$, then $A$ and $B$ are standard trapezoids with slopes $0$ and $d$, yielding (i) of the claim. If $\alpha=1$ and $d=1$, then applying the affine transformation given by $\varphi(x,y)=(x-y,y)$ results in $A$ and $B$ being standard trapezoids with slopes $0$ and $-1$, also yielding  part
(i) of the claim and completing Case 2.

\subsection*{Case 3: } Either $|I_2|\ge 2$ and $|J_2|=1$ or else $|I_2|=1$ and $|J_2|\geq 2$. By symmetry, it suffices to consider the case  $|I_2|\ge 2$ and $|J_2|=1$. Then, since $B$ is two-dimensional and $c=0$, it follows that  $d\ge 1$.

Since $|I_2|\geq 2$, we see that
$I_2\setminus \{(m-1)d\}$  is nonempty. Let $i\in I_2\setminus \{(m-1)d\}$.
Then $i-1\in I_2$ as well, whence  $(i-1,(n-1)d)\in I_2\times J_2$ is tight. Hence \eqref{kl-subset} implies
\be\label{fust}
A_{i}+B_{(n-1)d-1}\subseteq A_{i-1}+B_{(n-1)d},
\ee
for each $i\in I_2\setminus \{(m-1)d\}$. Noting that $|A_{i}+B_{(n-1)d-1}|= |A_{i-1}+B_{(n-1)d}|-1$, it follows from \eqref{fust} that
 $a_{i}+b_{(n-1)d-1}=a_{i-1}+b_{(n-1)d}+\{0,1\}$. Combining with \eqref{eq:aligned} shows that
  $$\epsilon_{i}:=a_{i}-a_{i-1}\in
\{0,1\},$$ for $i\in [(m-1)d+1,h-1]$. Set $\epsilon_i=0$ for $i\in \Z$ outside the interval $[(m-1)d+1,h-1]$.

Suppose $\epsilon_{(m-1)d+j}\neq 0$ for some $j\in [1,d-1]$ and assume $j$ is minimal with this property. Then $d\geq 2$, else the interval $[1,d-1]$ is empty. In view of the minimality of $j$ and \eqref{eq:aligned}, it follows that  $\epsilon_{(m-1)d+j}=a_{(m-1)d+j}=1$.
However, since the pairs $((m-1)d,(n-1)d-1)\in I_1\times J_1$ and $((m-1)d+j,(n-1)d-1-j)\in [0,md-1]\times [0,(n-1)d-d]$ are both  tight by the modular rule, it follows that \eqref{kl-subset} implies $A_{(m-1)d}+B_{(n-1)d-1}=A_{(m-1)d+j}+B_{(n-1)d-1-j}.$ Thus, making use of \eqref{eq:aligned}, it follows that  $$0=a_{(m-1)d}+b_{(n-1)d-1}=a_{(m-1)d+j}+b_{(n-1)d-1-j}=a_{(m-1)d+j},$$ contradicting that $a_{(m-1)d+j}=1$ as just shown above. So, since $\epsilon_i=0$ for $i\leq (m-1)d$ by definition, we instead conclude that $\epsilon_{i}=0$ for all $i\leq md-1$.

To show that part (ii) of Claim 1 holds for $A$ and $B$, it remains to show that, for the sequence $\epsilon=(\epsilon_i:\,i\in \Z)$, every
subsequence of $d$ consecutive terms has at most one entry equal to $1$, as then $A$ will be  an $\epsilon$--standard trapezoid
$T_{\epsilon}(m,h,0,d)$  with $B$  a
standard trapezoid $T(n,(n-1)d+1,0,d)$. So suppose by contradiction that $\epsilon_i=1$ and $\epsilon_j=1$ with $i,\,j\in I_2$, \ $i<j$ and $j-i\leq d-1$. Note this is only possible if $d\geq 2$. We may assume the difference $j-i$ is minimal, so that $\epsilon_s=0$ for $s\in [i+1,j-1]$. Since $\epsilon_{(m-1)d}=0$ and $\epsilon_i=1$ with $i\in I_2$, we have $i\geq (m-1)d+1$, so that $i-1\in I_2$. By the definition and minimality of $i$ and $j$, we have $a_{j}=a_{i-1}+2$. Since $(i-1,(n-1)d)\in I_2\times J_2$, it follows that this pair is tight, whence \eqref{kl-subset} yields $A_{j}+B_{(n-1)d-(j-i+1)}\subseteq
A_{i-1}+B_{(n-1)d}$. As a result, since
$|A_{j}+B_{(n-1)d-(j-i+1)}|=|A_{(m-1)d}+B_{(n-1)d}|-1$ in view of $j-i+1\leq d$, we conclude from \eqref{eq:aligned} that $a_{j}\in a_{i-1}+\{0,1\}$, contradicting that $a_{j}=a_{i-1}+2$ and completing the proof of Claim 1.
\end{proof}

By applying the linear transformation $(x,y)\mapsto (x,-y)$ and translating, we
deduce the analogous result when $d=0$ by swapping the roles of $c$
and $d$ in Claim \ref{claim:c=0}. To complete the proof of the
theorem, we use the following claim.

\begin{claim}\label{claim:part} Let $(A,B)$ be an extremal pair for
\eqref{eq:eq-discreteII}. Let $t$ and $t'$ be such that $|A_t|=m$
and  $|B_{t'}|=n$ respectively,  and set $$A^-=\cup_{i\le t} A_i,\;
B^-=\cup_{j\le t'} B_j,\; A^+=\cup_{i\ge t} A_i,\; B^+=\cup_{j\ge t'}
B_j.$$ Then each of $(A^-, B^-)$ and $(A^+, B^+)$ are   extremal pairs
for \eqref{eq:eq-discreteII}.
\end{claim}

\begin{proof} Since $(A,B)$ is an extremal pair for  \eqref{eq:eq-discreteII}, we know that
  $A_i$ and $B_j$ are arithmetic progressions with the same common difference for each $i$ and $j$.
  We have $A^-\cap A^+=A_t$, $B^-\cap B^+=B_{t'}$ and $(A^-+B^-)\cap(A^++B^+)=A_t+B_{t'}$.
Using $|A_t+B_{t'}|=|A_t|+|B_{t'}|-1=m+n-1,$ we get
 \begin{eqnarray*}
  |A+B|&\ge & |(A^-+B^-)\cup (A^++B^+)|\\
  &=&|A^-+B^-|+|A^++B^+|-|A_t+B_{t'}|\\
  &\ge &\left(
  \left(\frac{|A^-|}{m}+\frac{|B^-|}{n}-1\right)+\left(\frac{|A^+|}{m}+\frac{|B^+|}{n}-1\right)-1\right)(m+n-1)\\
  &=&\left(\frac{|A|}{m}+\frac{|B|}{n}-1\right)(m+n-1),
  \end{eqnarray*}
and all the inequalities are equalities, proving the claim.
\end{proof}

If either $c=0$ or $d=0$, then the theorem follows from Claim
\ref{claim:c=0}. Therefore assume $c\leq -1$ and $d\geq 1$. \
Let $$t_d=\max I_2,\quad t_c=\min I_2=(m-1)d,\quad t'_d=\max J_2\;\und\; t'_c=\min J_2=(n-1)d$$ and, for $t\in I_2$ and $t'\in J_2$, define
$$A^-(t)=\cup_{i\leq t}A_i,\quad B^-(t')=\cup_{i\leq t'}B_i,\quad
A^+(t)=\cup_{i\geq t}A_i,\;\und\; B^+(t')=\cup_{i\geq
t'}B_i.$$

We may assume, by exchanging the role of $A$ and $B$ as need be, that $|I_2|\geq |I_1|$ and, by a possible vertical reflection, that $d\geq |c|$.
By Claim 2,  $(A^-(t_d),B^-(t'_d))$ is an extremal pair for
\eqref{eq:eq-discreteII} with $c=0$. Thus, applying an appropriate affine transformation from Claim 1 coupled with a possible horizontal reflection, we can assume (i) or (ii) from Claim 1 holds for $(A^-(t_d),B^-(t'_d))$ with the nonzero slope of the $\epsilon$-standard or standard trapezoid being positive and with \be\label{special-extent}a_i=b_j=0\quad \mbox{ for all }\; i\in [0,md-1]\cap (I_1\cup I_2)\und j\in [0,nd-1]\cap (J_1\cup J_2),\ee where we have made use of the fact that $\epsilon_i=0$ for the first $d$ entries of $I_2$ by definition of an $\epsilon$-standard trapezoid.
By Claim 2, $(A^+(t_c),B^+(t'_c))$ is also an extremal pair for \eqref{eq:eq-discreteII} with $d=0$. Thus we can apply Claim 1 to the vertical reflections of $A^+(t_c)$ and $B^+(t'_c)$ to conclude there exists
a linear transformation $\phi$  of
the form $(x,y)\mapsto (x-\beta y, -y)$ such that $\phi(A^+(t_c))$ and $\phi(B^+(t'_c))$
satisfy one of the conclusions (i) or (ii) of Claim 1.

Suppose that $|I_2|, |J_2|\ge 2$. Then part (i) of Claim 1 must hold for both
$(A^-(t_d),B^-(t'_d))$ and $(\phi(A^+(t_c)),\phi(B^+(t'_c))$, in which case $A^+(t_c)$ and $B^+(t_c)$ are (non-standard) trapezoids. Since $|I_2|, |J_2|\ge 2$, there is a unique linear transformation of the form $(x,y)\mapsto (x-\alpha y,-y)$ that maps the trapezoids $A^+(t_c)$ and $B^+(t'_c)$ to a pair of standard trapezoids, i.e., there is a unique such linear transformation that makes the parallel sides vertical. Hence, since the parallel sides of the standard trapezoid $A^-(t_d)$, which are already vertical by assumption, are also the parallel sides of the (non-standard) trapezoid $A^+(t_c)$, we conclude that $\phi(x,y)=(x,-y)$ is simply a vertical reflection, in which case both $A$ and $B$ are standard trapezoids with common slopes, yielding (a).
So, in view of the assumption $|I_2|\geq |J_2|$, we can instead assume  $|J_2|=1$, whence $t'_c=t'_d=(n-1)d$.

Regardless of whether (i) or (ii) holds for $\phi(A^+(t_c))$ and $\phi(B^+(t'_c))$, it follows that $\phi(A^+(t_d))$ and $\phi(B^+(t'_d))$ are standard trapezoids (in fact, triangles) $T(m,1,\pm c,0)$ and $T(n,1,\pm c,0)$.
The sign of the difference $\pm c$ depends, of course, on whether $(a_i:i\in I_3)$ and $(b_i:i\in J_3)$ or $(a'_i:i\in I_3)$ and $(b'_i:i\in J_3)$ are arithmetic progressions of common difference, one of which must hold, with both signs possible  if and only if $|c|=1$ which occurs if and only if both pairs of sequences consist of two arithmetic progressions of common difference. The difference is $|c|=-c$ when $(a'_i:i\in I_3)$ and $(b'_i:i\in J_3)$ are arithmetic progressions, and $-|c|=c$ when $(a_i:i\in I_3)$ and $(b_i:i\in J_3)$ are arithmetic progressions. Moreover, since the map $\phi$ must either take $(a_i:i\in I_3)$ or $(a'_i:i\in I_3)$ to an arithmetic progression with difference $0$, it follows that $\phi$ is uniquely defined by this property when $c\leq -2$.

The pair $(t_d,t'_d)=(t_d,(n-1)d)\in I_2\times J_2$ is tight, whence \eqref{kl-subset}  and \eqref{special-extent} imply $$A_{t_d+1}+B_{(n-1)d-1}\subseteq A_{t_d}+B_{(n-1)d}\subseteq [a_{t_d},a_{t_d}+m+n-2]\times \{t_d+(n-1)d\}.$$ Since $B_{(n-1)d-1}=[0,n-2]\times \{(n-1)d-1\}$ and $|A_{t_d+1}|=m-1$, the previous inclusion implies that $(a_{t_d+1},a'_{t_d+1})\in \{(a_{t_d},a_{t_d}+m-2),(a_{t_d}+1,a_{t_d}+m-1),(a_{t_d}+2,a_{t_d}+m)\}$. We continue with the following claim.

\subsection*{Claim 3: } $(a_{t_d+1},a'_{t_d+1})=(a_{t_d}+2,a_{t_d}+m)$ implies $d=1$.
Suppose by contradiction that $d\geq 2$ and first consider the case when
 $|I_2|\geq 2$, so that $a_{(m-1)d+1}\in I_2$. Then, since $d\geq 2$, we conclude from \eqref{special-extent} that $A_{(m-1)d+1}=[0,m-1]\times\{(m-1)d+1\}$ and $A_{(m-1)d}=[0,m-1]\times\{(m-1)d\}$.
 As a result, since $(a_{(m-1)d+1},b_{(n-1)d})\in I_2\times J_2$ is tight,  \eqref{kl-subset} yields $$A_{(m-1)d}+B_{(n-1)d+1}\subseteq A_{(m-1)d+1}+B_{(n-1)d}=[0,m+n-2]\times \{(m+n-2)d+1\}.$$
Since $A_{(m-1)d}=[0,m-1]\times\{(m-1)d\}$ and $|B_{(n-1)d+1}|=|B_{t'_d+1}|=n-1$, the above inclusion implies
$(b_{(n-1)d+1},b'_{(n-1)d+1})=(b_{t'_d+1},b'_{t'_d+1})\in \{(0,n-2),(1,n-1)\}$. However, making use of the assumption $(a_{t_d+1},a'_{t_d+1})=(a_{t_d}+2,a_{t_d}+m)$, we see that
if $(a_i:i\in I_3)$ and $(b_i:i\in J_3)$ are   arithmetic progression with common difference $a_{t_d+1}-a_{t_d}=2$, then $(b_{t'_d+1},b'_{t'_d})=(2,n)$, while if $(a'_i:i\in I_3)$ and $(b'_i:i\in J_3)$ are   arithmetic progression with common difference $a'_{t_d+1}-a'_{t_d}=a'_{t_d+1}-(a_{t_d}+m-1)=1$, then  $(b_{t'_d+1},b'_{t'_d})=(3,n)$, both contradicting that we just showed $(b_{t'_d+1},b'_{t'_d+1})\in \{(0,n-2),(1,n-1)\}$. So it remains to consider the case $|I_2|=1$, whence $t_d=t_c=(m-1)d$ and $a_{t_d}=0$ by \eqref{special-extent}.

In this case, $(t_d,(n-1)d-1)=(t_c,(n-1)d-1)\in I_2\times J_2$ is tight by the modular rule, whence \eqref{special-extent} and \eqref{kl-subset} together imply $$A_{t_d+1}+B_{(n-1)d-2}\subseteq A_{t_d}+B_{(n-1)d-1}=[0,m+n-3]\times \{t_d+(n-1)d-1\}.$$ Since $d\geq 2$, \eqref{special-extent} implies that $B_{(n-1)d-2}=[0,n-2]\times\{(n-1)d-2\}$, whence the above inclusion yields
$$(a_{t_d+1},a'_{t_d+1})\in \{(0,m-2),(1,m-1)\}=\{(a_{t_d},a_{t_d}+m-2),(a_{t_d}+1,a_{t_d}+m-1)\},$$ contrary to the assumption of the claim.
This concludes the proof of Claim 3.

\medskip

Based upon the work done before Claim 3, we divide the remainder of the proof into five short cases depending on the value of $(a_{t_d+1},a'_{t_d+1})$ and whether $(a_i:i\in I_3)$ and $(b_i:i\in J_3)$ or $(a'_i:i\in I_3)$ and $(b'_i:i\in J_3)$ are arithmetic progressions of common difference.

\subsection*{Case 1: } $(a_{t_d+1},a'_{t_d+1})=(a_{t_d},a_{t_d}+m-2)$ and $(a_i:i\in I_3)$ and $(b_i:i\in J_3)$ are arithmetic progressions of common difference $a_{t_d+1}-a_{t_d}=0$.
 Now, since $(a_i:i\in I_3)$ and $(b_i:i\in J_3)$ are arithmetic progressions of common difference $a_{t_d+1}-a_{t_d}=0$, we see that \eqref{eq:aligned} holds for appropriate translations of the  vertical reflections of $A^+(t_c)$ and $B^+(t_c)$. Since $|J_1|=1$, this means, as shown in the proof of Claim 1, that we may assume $\phi$ is simply the vertical reflection map $\phi(x,y)=(x,-y)$. Consequently, if $\epsilon_i=0$ for all $i$, which is the case if (i) holds for $A^-(t_d),B^-(t'_d))$, then (a) immediately follows for $A$ and $B$. Otherwise, since the definition of an $\epsilon$-standard trapezoid ensures that  $\epsilon_i\geq 0$ for all $i$ with equality for $i$ outside $I_2$, and since $a_{t_c}=0$ by \eqref{special-extent}, we thus  have $a_{t_d}>0=a_{t_c}$.

 Since $\phi$ is simply the vertical reflection map $\phi(x,y)=(x,-y)$, it follows that  $\phi(A^+(t_c))$ is an $\epsilon$-standard trapezoid which is now simply the vertical reflection of $A^+(t_c)$. However, as $\epsilon_i\geq 0$ is part of the definition of an $\epsilon$-standard trapezoid, it now follows that $a_t\geq a_{t_d}$ for all $t\in [t_c,t_d]$, which, in view of $a_{t_c}=0<a_{t_d}$, is a contradiction.

\subsection*{Case 2: } $(a_{t_d+1},a'_{t_d+1})=(a_{t_d},a_{t_d}+m-2)$ and $(a'_i:i\in I_3)$ and $(b'_i:i\in J_3)$ are arithmetic progressions of common difference $a'_{t_d+1}-a'_{t_d}=
a'_{t_d+1}-(a_{t_d}+m-1)=-1$. If $c=-1$, then $(a_i:i\in I_3)$ and $(b_i:i\in J_3)$ are also arithmetic progressions of common difference $a_{t_d+1}-a_{t_d}=0$, which was a case already handled in Case 1. Therefore assume $c\leq -2$, whence \eqref{special-extent} and the assumption of the case together imply $b_{t'_d+2}=b_{(n-1)d+2}=-1$.

Since $((m-1)d,(n-1)d)\in I_2\times J_2$ is tight, \eqref{kl-subset} and \eqref{special-extent} imply
$$A_{(m-1)d-2}+B_{(n-1)d+2}\subseteq A_{(m-1)d}+B_{(n-1)d}=[0,m+n-2]\times \{(m+n-2)d\}.$$
In view of $d\geq |c|\geq 2$ and \eqref{special-extent}, we have $A_{(m-1)d-2}=[0,m-2]\times\{(m-1)d-2\}$, which combined with the above inclusion implies $b_{(n-1)d+2}\geq 0$, contrary to what we previously showed.

\subsection*{Case 3: } $(a_{t_d+1},a'_{t_d+1})=(a_{t_d}+2,a_{t_d}+m)$. Then, in view of Claim 3 and $d\leq |c|$, it follows that $d=1$ and $c=-1$, so that both $(a_i:i\in I_3)$ and $(b_i:i\in J_3)$ as well as $(a'_i:i\in I_3)$ and $(b'_i:i\in J_3)$ are arithmetic progressions of common differences $a_{t_d+1}-a_{t_d}=2$ and $a'_{t_d+1}-a'_{t_d}=
a'_{t_d+1}-(a_{t_d}+m-1)=1$, respectively, in which case applying the linear transformation $(x,y)\mapsto (-(x-y),y)$ reduces  Case 3 to the already considered Case 1.

\subsection*{Case 4: } $(a_{t_d+1},a'_{t_d+1})=(a_{t_d}+1,a_{t_d}+m-1)$ and $(a'_i:i\in I_3)$ and $(b'_i:i\in J_3)$ are arithmetic progressions of common difference $a'_{t_d+1}-a'_{t_d}=
a'_{t_d+1}-(a_{t_d}+m-1)=0$. In this case, similar to the argument in Case 1, we can assume, as shown in Claim 1, that  $\phi(x,y)=(-x,y)$ is simply a vertical reflection, and then (a) or (b) immediately follows for $A$ and $B$.

\subsection*{Case 5: } $(a_{t_d+1},a'_{t_d+1})=(a_{t_d}+1,a_{t_d}+m-1)$ and $(a_i:i\in I_3)$ and $(b_i:i\in J_3)$ are arithmetic progressions of common difference $a_{t_d+1}-a_{t_d}=1$. If $c=-1$, then $(a'_i:i\in I_3)$ and
$(b'_i:i\in J_3)$ are also arithmetic progressions of common difference $a'_{t_d+1}-a'_{t_d}=
a'_{t_d+1}-(a_{t_d}+m-1)=0$, which was a case already handled in Case 4. Therefore assume $c\leq -2$, in which case the map $\phi$ taking $A^+(t_c)$ to an $\epsilon$-standard or standard trapezoid is uniquely defined and must be given by $\phi(x,y)=(x-y,y)$. The conditions on the $\epsilon$-standard or standard trapezoid $A^+(t_c)$, when translated {\it with extreme care} back to $A$ via $\phi^{-1}$, imply that $\epsilon_i\in \{1,0\}$ for $i\in [(m-1)d+1,h-1]$, with no subsequence of $|c|$ consecutive terms $\epsilon_i$ with $i\in [(m-1)d+1,t_d]$ containing more than one value equal to $0$, and $\epsilon_i=1$ for $i\in [\max\{(m-1)d+1,t_d-|c|+2\},h-1]$. Since $A^-(t_d)$ is an $\epsilon$-standard or standard trapezoid with slopes $0$ and $d$, we know that no subsequence of $d$ consecutive terms $\epsilon_i$ with $i\in I_1\cup I_2$ contains more than one value equal to $1$, and that $\epsilon_i=0$ for $i\in [1,\min\{t_d,md-1\}]$.

Suppose $|I_2|\geq 2$. Then, since $|c|\geq 2$, it follows that $$t_d\in [\max\{(m-1)d+1,t_d-|c|+2\},h-1],$$ whence $\epsilon_{t_d}=1$ as noted in the previous paragraph. Thus, since $\epsilon_i=0$ for all $i\in [1,\min\{t_d,md-1\}]$, we conclude that $t_d\geq md$ and that $\epsilon_i=0$ for $i\in [1,md-1]$. If $d\geq 3$, then $\epsilon_i=0$ for $i=(m-1)d+1$ and $i=(m-1)d+2$, which contradicts that no consecutive subsequence of $|c|\geq 2$ terms $\epsilon_i$ with $i\in [(m-1)d+1,t_d]$ contains more than one value equal to $0$. Therefore $d=2$, whence $2\geq d\geq |c|\geq 2$ implies $c=-2$. Furthermore, since within the interval $[(m-1)d+1,t_d]$ there can be no $d=2$ consecutive terms $\epsilon_i$ equal to zero nor $|c|=2$ consecutive terms $\epsilon_i$ equal to $1$, we conclude that $\epsilon_i$ alternates between the values of $0$ and $1$ in the interval $[(m-1)d+1,t_d]$. Since the last term $\epsilon_{t_d}=1$, as noted before,  and since the first term $\epsilon_{(m-1)d+1}=0$, in view of $(m-1)d+1\leq md-1$, it follows that there are an even number of terms in $[(m-1)d+1,t_d]$, whence $|I_2|=|[(m-1)d+1,t_d]|+1$ is odd and (c) is now seen to hold. So it remains to consider the case when $|I_2|=1$.

In this case, if $d=2$, then $2=d\geq |c|\geq 2$ implies $c=-2$, and (c) is again seen to hold with $k=1$. Therefore assume $d\geq 3$. Since $((m-1)d-1,(n-1)d)\in I_2\times J_2$ is tight by the modular rule, \eqref{kl-subset} and \eqref{special-extent} imply
\be\label{einmal}A_{(m-1)d-3}+B_{(n-1)d+2}\subseteq A_{(m-1)d-1}+B_{(n-1)d}=[0,m+n-3]\times\{(m+n-2)d-1\}.\ee However, since $d\geq 3$ and $|c|\geq 2$, and since $(b_i:i\in J_3)$ is an arithmetic progression with difference $1$ by assumption of the case, it follows in view of \eqref{special-extent} that $A_{(m-1)d-3}+B_{(n-1)d+2}=[2,m+n-2]\times \{(m+n-2)d-1\}$, contradicting \eqref{einmal} and completing the proof.
\end{proof}

\section{The Continuous Case}\label{sec:continuous}

Let $A$ and $B$ be two convex bodies in
$\R^2$, meaning that $A$ and $B$ are compact convex sets with
nonempty interiors.  As we already mentioned in
the introduction,  Bonnesen proved the following inequality:
\begin{equation}\label{eq:bm2ibis}
|A+B| \ge \left( \frac{|A|}{m}+\frac{|B|}{n}\right) (m+n),
\end{equation}
where $|X|$ stands for the area of a measurable set $X$ in $\R^2$
and $m=|\pi (A)|$ and $n=|\pi(B)|$ are the lengths of the
projections of $A$ and $B$ onto the first coordinate. As was also pointed
out in  the introduction, Bonnesen's
inequality \eqref{eq:bm2ibis}  implies the classical Brunn-Minkowski inequality
\eqref{eq:bmi}.

In this section, we will describe the structure of convex  sets $A$
and $B$ in the plane for which the {\it Bonnesen equality}
\begin{equation}\label{eq:bm2}
|A+B| = \left( \frac{|A|}{m}+\frac{|B|}{n}\right) (m+n)
\end{equation}
holds.

A convex body $A\subseteq \R^2$ can be described as
\begin{equation}\label{eq:convex}
A=\{(x,y)\in \R^2: x\in \pi(A), u_A(x)\le y \le v_A(x)\},
\end{equation}
for some pair $u_A$ and $v_A$ of real functions defined on the
interval $\pi(A)$, where $\pi:\R^2\rightarrow \R$ denotes the orthogonal projection onto the $x$-axis,  $v_A$ is concave and $u_A$ is convex. Without loss of
generality, we may assume that $\pi (A)=[0,m]$ with $m>0$. {We will use
this notation in what follows.}

In order to formulate our main result, we need the following
definition. We say that $A'$ is a vertical {\it stretching} of $A$
of amount $h \ge 0$  if
$$
A'=\{(x,y)\in \R^2: x\in \pi(A)=[0,m], u_A(x)\le y \le v_A(x)+h\}.
$$
Let us show that, by stretching an extremal pair for the Bonnesen
inequality, we get another extremal pair.

\begin{lemma}\label{lem:stretching}
Let $A$ and $B$ be two convex bodies and let $A'$ be a vertical stretching
of $A$. Then $ (A, B)$ is an extremal pair for Bonnesen
inequality, i.e.,
\begin{equation*}
|A+B|=(\frac{|A|}{m}+\frac{|B|}{n})(m+n),
\end{equation*}
if and only if $ (A', B)$ is also an extremal pair, i.e.,
\begin{equation}\label{eq:D}
|A'+B|=(\frac{|A'|}{m}+\frac{|B|}{n})(m+n).
\end{equation}
\end{lemma}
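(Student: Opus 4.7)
The plan is to observe that vertical stretching corresponds to Minkowski summation with the segment $S=\{0\}\times[0,h]$, and that this operation affects both sides of the Bonnesen equality by the same additive amount.

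First, I would verify the decomposition $A'=A+S$ where $S=\{0\}\times[0,h]$. Indeed, for each $x\in[0,m]$, the vertical section of $A+S$ at $x$ equals $[u_A(x),v_A(x)]+[0,h]=[u_A(x),v_A(x)+h]$, which matches the defining expression of $A'$. From this and associativity of Minkowski addition, $A'+B=(A+B)+S$.

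Next, I would compute the relevant areas. Since $v_A$ is concave and $u_A$ is convex, $|A|=\int_0^m(v_A(x)-u_A(x))\,dx$, so
\[
|A'|=\int_0^m\bigl(v_A(x)+h-u_A(x)\bigr)\,dx=|A|+hm.
\]
Similarly, since $A+B$ is convex with $\pi(A+B)=\pi(A)+\pi(B)=[0,m+n]$ (being a Minkowski sum of two intervals), the vertical sections of $(A+B)+S$ are obtained from those of $A+B$ by translating upward across an interval of length $h$, hence
\[
|A'+B|=|(A+B)+S|=|A+B|+h(m+n).
\]
Also the projection length of $A'$ equals that of $A$, namely $m$, while $|B|$ and $n$ are unchanged.

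Finally, I would simply subtract:
\[
\left(\frac{|A'|}{m}+\frac{|B|}{n}\right)(m+n)=\left(\frac{|A|}{m}+\frac{|B|}{n}\right)(m+n)+h(m+n),
\]
so the defect
\[
|A'+B|-\left(\frac{|A'|}{m}+\frac{|B|}{n}\right)(m+n)=|A+B|-\left(\frac{|A|}{m}+\frac{|B|}{n}\right)(m+n)
\]
is invariant under stretching. Hence \eqref{eq:D} holds for $(A',B)$ if and only if the corresponding equality holds for $(A,B)$.

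There is no real obstacle here; the only point requiring mild care is ensuring the area identities use only that $A+B$ has a horizontal projection of length $m+n$ (valid because $A,B$ are convex bodies), so the decomposition $A'+B=(A+B)+S$ truly yields an additive area increase of $h(m+n)$.
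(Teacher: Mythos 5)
Your proof is correct and follows essentially the same route as the paper's: both establish $|A'|=|A|+hm$ and $|A'+B|=|A+B|+h(m+n)$ and then observe that the Bonnesen defect $|A+B|-\left(\frac{|A|}{m}+\frac{|B|}{n}\right)(m+n)$ is invariant under stretching. Your identification $A'=A+(\{0\}\times[0,h])$ is a slightly tidier justification of the one step the paper asserts more directly (that $A'+B$ is the vertical stretching of $A+B$ of amount $h$), but the argument is the same.
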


\begin{proof} For a convex body $X$, we denote by $u_X$ and $v_X$ the
bottom and top functions which define $X$ as in \eqref{eq:convex}.
We observe that, if $X'$ is an vertical stretching of  $X$ of amount $h$, then
$$|X'|=\int_0^x (v_X(t)+h-u_X(t) )dt
=|X|+hx.$$

We assume that $A$ is given by \eqref{eq:convex} and $B=\{(x,y)\in
\R^2: 0 \le x \le n, u_B(x)\le y \le v_B(x)\}.$ If $u_{A+B}$ and $v_{A+B}$ are the bottom and top functions defining the convex body
$A+B$, then
$$
A'+B=\{ (x,y)\in \R^2:
0\le x\le m+n,
~~u_{A+B}(x)\le y\le v_{A+B}(x)+h\}.$$
This means that if $A'$ is a vertical stretching of $A$ of amount $h$, then $A'+B$ is
also a vertical stretching of $A+B$ of amount $h$. Therefore $|A'|=|A|+hm,
|A'+B|=|A+B|+h(m+n)$, and we get
\begin{eqnarray*}
|A'+B|-\left(\frac{|A'|}{m}+\frac{|B|}{n}\right)(m+n)
&=&|A+B|+h(m+n)-\left(\frac{|A|+hm}{m}+\frac{|B|}{n}\right)(m+n)\nonumber\\
&=&|A+B|-\left(\frac{|A|}{m}+\frac{|B|}{n}\right)(m+n).
\end{eqnarray*}
Lemma \ref{lem:stretching} follows.
\end{proof}

The characterization of extremal sets for the Bonnesen inequality
is the following theorem.

\begin{theorem}\label{thm:invcont-general} Let
\begin{eqnarray}
A&=&\{ (x,y)\in \R^2: 0\le x\le m, ~~u_A(x)\le y\le v_A(x)\},\label{eq:A}\\
B&=&\{ (x,y)\in \R^2: 0\le x\le n, ~~u_B(x)\le y\le
v_B(x)\}\label{eq:B}
\end{eqnarray}
be two convex bodies in the plane $\R^2.$ Then
$$|A+B|= \left( \frac{|A|}{m}+\frac{|B|}{n}\right)(m+n)$$
if and only if there is a pair $A'$ and $B'$ of
homothetic convex bodies such that $A$ is a vertical stretching of $A'$
and $B$ is a vertical stretching of $B'$.
\end{theorem}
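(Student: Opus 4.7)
My plan splits the argument into a quick sufficiency direction, handled by Lemma \ref{lem:stretching}, and a substantial necessity direction, in which I reduce to the equality case of Brunn--Minkowski by a single vertical un-stretching. For sufficiency, any two homothetic convex bodies $A', B'$ with $\pi(A') = [0,m]$ and $\pi(B') = [0,n]$ must have homothety ratio $n/m$, giving $|A'|/m^2 = |B'|/n^2$. Under this density equality, the right-hand side of \eqref{eq:bm2i} reduces to $(|A'|^{1/2}+|B'|^{1/2})^2$, the Brunn--Minkowski bound \eqref{eq:bmi}, via the identity displayed in the introduction immediately below \eqref{eq:bm2i}. Since homothetic bodies attain Brunn--Minkowski equality, $(A', B')$ is Bonnesen-extremal, and applying Lemma \ref{lem:stretching} once to each of the stretchings $A' \to A$ and $B' \to B$ propagates the equality to $(A, B)$.

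For necessity, assume \eqref{eq:bm2} holds. By possibly interchanging $A$ and $B$ (which also swaps $m$ and $n$ and leaves \eqref{eq:bm2} invariant), I may assume $|A|/m^2 \leq |B|/n^2$, and set $h := |B|/n - n|A|/m^2 \geq 0$. The heart of the argument is the pointwise estimate $v_B(x) - u_B(x) \geq h$ for every $x \in [0,n]$. Granting this, the set
\[
B' := \{(x,y) \in \R^2 : 0 \leq x \leq n,\ u_B(x) \leq y \leq v_B(x) - h\}
\]
is a convex body (intersection of $\{y \geq u_B(x)\}$ with $\{y \leq v_B(x) - h\}$, both convex) of which $B$ is a vertical stretching by amount $h$, with area $|B'| = |B| - hn$ and density $|B'|/n^2 = |A|/m^2$. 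By Lemma \ref{lem:stretching}, $(A, B')$ also attains Bonnesen equality; but the matched densities place this pair precisely in the regime where Bonnesen coincides with Brunn--Minkowski, so $(A, B')$ attains Brunn--Minkowski equality as well, forcing $A$ and $B'$ to be homothetic convex bodies. Setting $A' := A$ (trivial stretching with $h_A = 0$) then exhibits the required homothetic pair.

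The principal obstacle is the pointwise inequality $v_B(x) - u_B(x) \geq h$, and I would attack it by revisiting the derivation of \eqref{eq:bm2i} in \cite{gs} and tracking where equality can be lost. Writing $|A+B| = \int_0^{m+n} \ell_{A+B}(z)\, dz$ for $\ell_{A+B}(z)$ the length of the vertical slice of $A+B$ at horizontal position $z$, and applying the slicewise estimate $\ell_{A+B}(x+y) \geq \ell_A(x) + \ell_B(y)$, the Bonnesen bound \eqref{eq:bm2i} assembles from contributions over a left strip $z \in [0,n]$, a middle strip, and a right strip $z \in [m, m+n]$. Equality in \eqref{eq:bm2} must then force equality in each contribution, producing rigid matching conditions between the boundary functions: the lower envelopes $u_A$ and $u_B$ related by an affine rescaling of ratio $n/m$ up to a vertical shift, and the upper envelopes $v_A$ and $v_B$ similarly related after suitable vertical translations. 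From these rigidity identifications, a direct computation will show that every vertical slice of $B$ has height at least $h$, closing the argument.
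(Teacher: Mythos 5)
Your sufficiency argument is correct and matches the paper's. Your necessity strategy is genuinely different in organization from the paper's: instead of splitting each body into upper and lower halves and proving a rigidity theorem for graphs of concave functions (Theorem \ref{thm:invcont-graphs}), you propose to un-stretch only the denser body $B$ by the exact amount $h$ needed to equalize the densities $|A|/m^2=|B'|/n^2$, at which point Bonnesen equality collapses to Brunn--Minkowski equality and classical rigidity gives homothety. This reduction is clean and, if completed, would be shorter than the paper's route. The deduction ``matched densities $+$ Bonnesen equality $\Rightarrow$ BM equality $\Rightarrow$ homothety'' is sound, and the target inequality $v_B-u_B\geq h$ is indeed true (it follows a posteriori from the theorem).

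However, there is a genuine gap: the pointwise inequality $v_B(x)-u_B(x)\geq h$, which you correctly identify as the principal obstacle, is never actually proved. Your plan for it --- ``track where equality can be lost'' in a strip decomposition of the derivation of \eqref{eq:bm2i} and conclude that the lower envelopes $u_A,u_B$ and the upper envelopes $v_A,v_B$ are each homothetic up to vertical shift --- asserts precisely the rigidity statement that constitutes the analytical core of the theorem. The Bonnesen bound is not a sum of three strip contributions each separately tight; its proof involves a maximum over all decompositions $t=x+y$ of each vertical slice of $A+B$ followed by an averaging step, and extracting the equality conditions from that requires real work. The paper does this by localizing extremality to arbitrarily fine vertical strips (Lemma \ref{lem:partition}) and then showing that any persistent gap between the one-sided derivatives of the two boundary functions forces a quantitative surplus $\frac{mn}{2}\epsilon$ in the bound (Lemma \ref{lem:necessity1}(b)), whence $f'=g'\circ\lambda^{-1}$ a.e.\ and the Fundamental Theorem of Calculus for concave (Lipschitz) functions yields the homothety of the envelopes. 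None of this mechanism appears in your proposal; ``equality must force equality in each contribution, producing rigid matching conditions'' and ``a direct computation will show'' are placeholders for it. Note also that once you do have the envelope rigidity, the theorem follows directly by taking maximal vertical compressions of $A$ and $B$ (as the paper does), so the detour through $h$ and $B'$ would then be unnecessary --- the logical weight of your proof rests entirely on the step you have left unproved.
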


The proof of Theorem \ref{thm:invcont-general} will be derived
from the results below. Its proof requires only basic notions
from the differential calculus of convex functions; see
\cite{convexcalculus}, \cite{convexcalculusB}. We summarize the needed points below. Let
$f$ be  a positive real concave function defined on an interval
$[m,n]$. We let
$$f_+'(x):=\underset{\lambda>0}{\lim_{\lambda\rightarrow 0}}\frac{f(
x+\lambda   )-f( x)}{\lambda} \und
~f_-'(x):=\underset{\lambda<0}{\lim_{\lambda\rightarrow 0}}\frac{f(
x+\lambda   )-f( x)}{\lambda}$$ denote the {\it right derivative}
and  {\it left derivative} of  $f$ at $x$ respectively. It is a
basic property of concave functions that these one sided derivatives
exist at every point $x\in (m,n)$ and that $f_-'(x)\ge f'_+(x)$, with
equality occurring precisely when $f$ is differentiable at $ x$.
When $f$ is concave, it is differentiable a.e. with $f'$ continuous
on the subset of points where it is defined. In fact, $f$ is
Lipschitz continuous, and thus absolutely continuous, so that the
Fundamental Theorem of Calculus holds. In particular, if the
derivative is zero a.e., then $f$ must be a constant
function.

We will first give the characterization of equality in Bonnesen's
inequality  for convex sets defined by the graph
of   non-negative concave functions as in \eqref{p-c1} and
\eqref{p-c2}.

\begin{theorem}\label{thm:invcont-graphs} Let $f$ and $g$ be concave  real positive
functions defined in the
intervals $[0,m]$ and $[0,n]$ respectively and let
\begin{eqnarray}
A&=& \{ (x,y)\in \R^2: 0\le x\le m, ~0\le y\le f(x) \} \label{p-c1},\\
B&=& \{ (x,y)\in \R^2: 0\le x\le n, ~0\le y\le g(x) \} \label{p-c2}
\end{eqnarray}
be convex sets in the plane $\R^2.$
Assume that $C=m\left(\frac{|A|}{m^2}-\frac{|B|}{n^2}\right)$ is non-negative.
Then, the pair $(A,B)$ is extremal for
Bonnesen inequality, that is,
$$
|A+B|=\left(\frac{|A|}{m}+\frac{|B|}{n}\right)(m+n),
$$
if and only if $$f(x)=\frac{m}{n}g(\frac{n}{m}x)+C,  \quad \mbox{ for every }\; 0 \le x \le m. $$
\end{theorem}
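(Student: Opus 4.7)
The plan is to handle the two directions separately. The \emph{if} direction is immediate from Lemma \ref{lem:stretching}: given $f(x) = (m/n)g((n/m)x) + C$, let $\tilde A$ be the region under $x \mapsto (m/n)g((n/m)x)$ on $[0,m]$. An affine change of variables shows $\tilde A = (m/n)B$, so $\tilde A$ and $B$ are homothetic; since $C \ge 0$, the set $A$ is then the vertical stretching of $\tilde A$ by amount $C$. By Lemma \ref{lem:stretching}, Bonnesen's equality for $(A, B)$ is equivalent to Bonnesen's equality for $(\tilde A, B)$. Homothetic sets achieve equality in Brunn-Minkowski \eqref{eq:bmi}, and a direct computation yields $|\tilde A|/m^2 = |B|/n^2$, so the identity in \eqref{eq:imply} shows that \eqref{eq:bm2ibis} and \eqref{eq:bmi} coincide for $(\tilde A, B)$; Bonnesen's equality therefore holds.

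For the \emph{only if} direction, the strategy is to exploit a very explicit proof of Bonnesen's inequality in this graph setting: testing the sup-convolution at a single specific point. Since $A$ and $B$ share a flat bottom at $y = 0$, we may write $A + B = \{(x,y) : 0 \le x \le m+n,\; 0 \le y \le V(x)\}$, where the top function
$$V(x) = \sup\{f(s) + g(t) : s + t = x,\; 0 \le s \le m,\; 0 \le t \le n\}$$
is concave. The admissible choice $(s,t) = (xm/(m+n),\, xn/(m+n))$ produces the pointwise bound $V(x) \ge f(xm/(m+n)) + g(xn/(m+n))$, and integrating it over $[0, m+n]$ with the linear substitutions $s = xm/(m+n)$ and $t = xn/(m+n)$ reproduces exactly the Bonnesen lower bound $(|A|/m + |B|/n)(m+n)$. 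Hence equality in Bonnesen is equivalent to $V(x) = f(xm/(m+n)) + g(xn/(m+n))$ holding for almost every, and by continuity of $V$, $f$, $g$ on the relevant open intervals every, $x \in (0, m+n)$.

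Fix such an $x$ and set $s = xm/(m+n)$, $t = (n/m)s$. The identity above says $s$ maximizes the concave function $s' \mapsto f(s') + g(x - s')$ on its admissible interval, so the super-differential condition $f'_+(s) \le g'_-(t)$ and $f'_-(s) \ge g'_+(t)$ holds. At every $s \in (0, m)$ for which $f$ is differentiable at $s$ and $g$ is differentiable at $(n/m)s$---a condition failing only on a set of measure zero---these collapse to $f'(s) = g'((n/m)s)$. Consequently $h(s) := f(s) - (m/n)g((n/m)s)$ is absolutely continuous on $[0, m]$ with $h' = 0$ almost everywhere, hence equal to a constant $C'$; integrating $f(s) = (m/n)g((n/m)s) + C'$ over $[0,m]$ and using $\int_0^m g((n/m)s)\,ds = (m/n)|B|$ yields $|A| = (m^2/n^2)|B| + mC'$, which forces $C' = |A|/m - m|B|/n^2 = C$, as desired.

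I expect the delicate step to be the passage from optimality of $s = xm/(m+n)$ in the sup-convolution to the pointwise functional equation $f'(s) = g'((n/m)s)$. This requires invoking the super-differential calculus for concave functions (with one-sided derivatives playing the role of gradients), verifying that the exceptional set where either $f$ fails to be differentiable at $s$ or $g$ fails to be differentiable at $(n/m)s$ has measure zero under the linear rescaling $s \mapsto (n/m)s$, and confirming that $f, g$ are absolutely continuous on their closed intervals so that the resulting a.e. identity can be integrated back to a genuine functional equation.
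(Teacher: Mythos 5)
Your proof is correct, and the necessity direction takes a genuinely different route from the paper's. The paper proves necessity by combining two lemmas: a quantitative deficit estimate (Lemma \ref{lem:necessity1}(b), showing that a uniform derivative gap $f'_+\geq g'_+ +\epsilon$ forces $|A+B|$ to exceed the Bonnesen bound by $\frac{mn}{2}\epsilon$) and a partition lemma (Lemma \ref{lem:partition}, showing extremality is inherited by the pieces of corresponding equal partitions of $[0,m]$ and $[0,n]$); if $f'$ and $g'(\lambda^{-1}\cdot)$ disagree somewhere, continuity of the derivative produces a small interval of uniform disagreement, and a sufficiently fine partition isolates a non-extremal piece, a contradiction. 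You instead represent $A+B$ as the hypograph of the sup-convolution $V(x)=\sup\{f(s)+g(t):s+t=x\}$, note that the proportional split $(s,t)=\left(\frac{m}{m+n}x,\frac{n}{m+n}x\right)$ gives a pointwise lower bound on $V$ whose integral is exactly the Bonnesen bound, and deduce from equality that this split is optimal for every $x$; the first-order conditions for an interior maximizer of the concave function $s'\mapsto f(s')+g(x-s')$ then yield $f'(s)=g'\bigl(\frac{n}{m}s\bigr)$ off a countable set, and the Fundamental Theorem of Calculus finishes as in the paper. Your route is more self-contained (it reproves the inequality in this graph setting rather than citing it), replaces the partition and continuity-of-derivative arguments with a single pointwise optimality condition, and makes transparent that the sign hypothesis on $C$ plays no role in the necessity direction. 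The technical points you flag --- that the non-differentiability sets of concave functions are countable (hence null under linear rescaling) and that $f$ and $g$ satisfy the Fundamental Theorem of Calculus up to the endpoints --- are genuine but routine, and are treated at exactly the same level of rigor in the paper's own preliminaries. The sufficiency direction is the same as the paper's Lemma \ref{lem:equality}.
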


\noindent{\bf Remark.} If the constant
$C=m\left(\frac{|A|}{m^2}-\frac{|B|}{n^2}\right)$ is negative,
then we use a similar argument as in the proof of Theorem
\ref{thm:invcont-graphs} and we get that that the pair $(A,B)$ is
extremal for Bonnesen inequality if and only if the set $B$ is be
a vertical stretching of $A$.

For the proof of Theorem \ref{thm:invcont-graphs}, we proceed with a series of lemmas.
 The first shows that the condition on $f$ in Theorem
\ref{thm:invcont-graphs} is sufficient.

\begin{lemma}\label{lem:equality}
Let $A$ and $B$ be the convex sets in the plane $\R^2$ defined in
\eqref{p-c1} and \eqref{p-c2}, respectively.

If $C=m\left(\frac{|A|}{m^2}-\frac{|B|}{n^2}\right)\ge 0$
and $f(x)=\frac{m}{n}g(\frac{n}{m}x)+C,$
then
$$
|A+B|=\left(\frac{|A|}{m}+\frac{|B|}{n}\right)(m+n).
$$
\end{lemma}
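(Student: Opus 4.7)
The plan is to factor the hypothesis through a pair of homothetic bodies, for which the Bonnesen equality reduces to a routine computation, and then transfer the equality back to $(A,B)$ using the already established Lemma~\ref{lem:stretching}.

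First I would introduce the auxiliary convex body
\[
A' = \Bigl\{(x,y)\in\R^2 : 0\le x\le m,\ 0\le y\le \tfrac{m}{n}\,g\bigl(\tfrac{n}{m}x\bigr)\Bigr\},
\]
which is well defined and convex because $x\mapsto \tfrac{m}{n}g(\tfrac{n}{m}x)$ is concave and nonnegative on $[0,m]$. Under the map $(x,y)\mapsto (\tfrac{n}{m}x,\tfrac{n}{m}y)$ the set $A'$ is carried onto $B$, so $A'$ and $B$ are homothetic with ratio $m/n$, i.e.\ $B = \tfrac{n}{m}A'$. By hypothesis $f(x) = \tfrac{m}{n}g(\tfrac{n}{m}x) + C$ with $C\ge 0$, so $A$ is the vertical stretching of $A'$ by amount $C$ in the sense of Lemma~\ref{lem:stretching}.

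Next I would verify the Bonnesen equality for the homothetic pair $(A',B)$ directly. Since $B=\tfrac{n}{m}A'$, we have
\[
A' + B \;=\; A' + \tfrac{n}{m}A' \;=\; \tfrac{m+n}{m}\,A',
\qquad
|A'+B| \;=\; \Bigl(\tfrac{m+n}{m}\Bigr)^{\!2}|A'|.
\]
On the other hand, $|B| = (n/m)^{2}|A'|$, so
\[
\Bigl(\tfrac{|A'|}{m}+\tfrac{|B|}{n}\Bigr)(m+n)
\;=\;\Bigl(\tfrac{|A'|}{m}+\tfrac{n|A'|}{m^{2}}\Bigr)(m+n)
\;=\;\tfrac{|A'|(m+n)^{2}}{m^{2}},
\]
which matches $|A'+B|$. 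Hence $(A',B)$ is an extremal pair for the Bonnesen inequality.

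Finally, since $A$ is a vertical stretching of $A'$ of amount $C\ge 0$ (both bodies project onto $[0,m]$), Lemma~\ref{lem:stretching} applies and yields the equivalence
\[
|A+B| = \Bigl(\tfrac{|A|}{m}+\tfrac{|B|}{n}\Bigr)(m+n)
\iff
|A'+B| = \Bigl(\tfrac{|A'|}{m}+\tfrac{|B|}{n}\Bigr)(m+n).
\]
The right-hand equality was just established, so the left-hand one holds as well, which is the desired conclusion. The only mildly delicate point is ensuring that $A$ is genuinely a stretching of $A'$ rather than a shrinking, and this is exactly what the assumption $C\ge 0$ guarantees; no other obstacle arises, since the main work---the homothety computation and the invariance of Bonnesen equality under vertical stretching---has been isolated into Lemma~\ref{lem:stretching}.
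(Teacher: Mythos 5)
Your proof is correct and follows essentially the same route as the paper: realize $A$ as a vertical stretching (by $C\ge 0$) of a homothetic copy $A'$ of $B$, check that the homothetic pair $(A',B)$ is Bonnesen-extremal, and transfer via Lemma~\ref{lem:stretching}. The only cosmetic difference is that you verify the homothetic case by the direct computation $A'+\tfrac{n}{m}A'=\tfrac{m+n}{m}A'$, whereas the paper invokes the equality case of the Brunn--Minkowski inequality together with the observation that the two lower bounds coincide when $|A'|/m^2=|B|/n^2$.
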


\begin{proof} The hypothesis
$f(x)=\frac{m}{n}g(\frac{n}{m}x)+C$ implies that $A$ is a vertical
stretching of a homothetic copy of $B$ of amount $C$. Thus the
statement of the lemma follows from the case of equality in
Brunn--Minkowski Theorem and Lemma \ref{lem:stretching}, because
the lower bounds for $|A+B|$ implied by \eqref{eq:bmi} and
\eqref{eq:bm2i} coincide when $C=0$.
 \end{proof}


\begin{lemma}\label{lem:necessity1}
Let $A$ and $B$ be the convex sets in the plane $\R^2$ defined in
\eqref{p-c1} and \eqref{p-c2}, respectively. Then
\item {(a)}
$$|A+B|\ge  \left( \frac{|A|}{m}+\frac{|B|}{n}\right) (m+n)+\Delta,$$
where $$\Delta= \left(nf(m)-\frac{n}{m}\int_0^m
f(x)\;dx\right)+\left(m g(0)-\frac{m}{n}\int_0^n g(x)
\;dx\right).$$
\item {(b)} In particular, if $f'_+(x)\geq
g'_+(y)+\epsilon$ for all $x\in [0,m)$ and $y\in [0,n)$, where
$\epsilon\geq 0$, then
$$|A+B|\geq
\left(\frac{|A|}{m}+\frac{|B|}{n}\right)(m+n)
+\frac{mn}{2}\epsilon.$$
\end{lemma}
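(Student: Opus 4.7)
The plan for (a) is to write $A+B$ explicitly and integrate. Since $u_A \equiv u_B \equiv 0$, the set $A+B$ equals
$$\{(x,y)\in \R^2 : 0 \le x \le m+n,\; 0 \le y \le h(x)\},$$
where $h(x) = \sup\{f(s) + g(t) : s+t = x,\; s\in[0,m],\; t\in[0,n]\}$. I would then observe two trivial lower bounds for $h$: choosing $(s,t) = (x,0)$ on $[0,m]$ gives $h(x) \ge f(x) + g(0)$, while choosing $(s,t) = (m, x-m)$ on $[m, m+n]$ gives $h(x) \ge f(m) + g(x-m)$. Integrating these pointwise bounds on the respective intervals yields
$$|A+B| = \int_0^{m+n} h(x)\,dx \;\ge\; \int_0^m (f(x)+g(0))\,dx + \int_m^{m+n}(f(m)+g(x-m))\,dx = |A| + mg(0) + |B| + nf(m).$$
To finish (a), I would just note the algebraic identity
$$\left(\frac{|A|}{m}+\frac{|B|}{n}\right)(m+n) + \Delta = |A|+|B|+nf(m)+mg(0),$$
which follows by expanding $\Delta = n(f(m)-|A|/m) + m(g(0)-|B|/n)$.

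For (b), the plan is to deduce it from (a) by comparing $nf(m)+mg(0)$ with $n|A|/m + m|B|/n + mn\epsilon/2$. Set $\alpha = \sup_{y\in[0,n)} g'_+(y)$; by concavity of $g$ this supremum is $g'_+(0)$ (and may be negative). The hypothesis then reads $f'_+(x) \ge \alpha + \epsilon$ for every $x\in[0,m)$. Using absolute continuity (each concave function on a compact interval is Lipschitz, hence satisfies the Fundamental Theorem of Calculus), I would write
$$f(m)-f(t) = \int_t^m f'_+(s)\,ds \;\ge\; (\alpha+\epsilon)(m-t) \quad \text{and} \quad g(t)-g(0) = \int_0^t g'_+(s)\,ds \;\le\; \alpha t.$$
Integrating over $[0,m]$ and $[0,n]$ respectively gives $f(m) - \frac{|A|}{m} \ge (\alpha+\epsilon)\frac{m}{2}$ and $g(0) - \frac{|B|}{n} \ge -\alpha\frac{n}{2}$. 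Multiplying by $n$ and $m$ and adding, the $\alpha$-terms cancel and one obtains
$$nf(m) + mg(0) \;\ge\; \frac{n|A|}{m} + \frac{m|B|}{n} + \frac{mn\epsilon}{2}.$$
Substituting this into the bound from (a) and using the identity in the previous paragraph yields (b).

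Neither step looks like it should present real difficulty; the only point requiring care is making sure the slope comparison is done with the correct one-sided derivatives and that the sign of $\alpha$, which is unconstrained, does not cause issues. This is handled automatically by the cancellation of the $\alpha$ terms when $f(m)-|A|/m$ and $g(0)-|B|/n$ are weighted by $n$ and $m$ respectively.
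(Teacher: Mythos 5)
Your proof is correct and follows essentially the same route as the paper's: your pointwise bounds $h(x)\ge f(x)+g(0)$ on $[0,m]$ and $h(x)\ge f(m)+g(x-m)$ on $[m,m+n]$ are precisely the paper's containments $A+(\{0\}\times[0,g(0)])\subseteq A+B$ and $(\{m\}\times[0,f(m)])+B\subseteq A+B$, yielding the same estimate $|A+B|\ge |A|+|B|+mg(0)+nf(m)$. For part (b) the paper instead integrates by parts and changes variables before applying the concavity bound $g'_+(y)\le g'_+(0)$, but the ingredients are identical to yours, and your symmetric treatment of $f(m)-|A|/m$ and $g(0)-|B|/n$ with the cancelling $\alpha$-terms is, if anything, slightly cleaner.
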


\begin{proof} We first observe that
\ber \nn A+(\{0\} \times [0,g(0)] )&\subseteq& (A+B)\cap
([0,m]\times\R),\\
\nn (\{m\} \times [0,f(m)] )+B&\subseteq& (A+B)\cap ([m,m+n]\times
\R ). \eer

Therefore, 
 \ber\nn |A+B|&\geq&|A+(\{0\} \times [0,g(0)])
|+|(\{m\} \times [0,f(m)])+B|
\\
&=& |A|+m g(0)+|B|+nf(m), \label{old-estimate} \eer
 and we complete
the proof of the first part of Lemma \ref{lem:necessity1} as
follows:
\begin{align}
&|A+B|-(\frac{|A|}{m}+\frac{|B|}{n} )(m+n)\geq \\\nn
&\geq |A|+mg(0)+|B|+nf(m)-(\frac{|A|}{m}+\frac{|B|}{n})(m+n)\\
&=mg(0)+nf(m)-\frac{n}{m}|A|-\frac{m}{n}|B|\\
&=
n\left(f(m)-\frac{|A|}{m}\right)+m\left(g(0)-\frac{|B|}{n}\right)=\Delta.
\end{align}

\medskip

It remains to prove assertion (b). Thus suppose $f'_+(x)\geq
g'_+(y)+\epsilon$ for all $x\in [0,m)$ and $y\in [0,n)$, where
$\epsilon\geq 0$. Since $f:[0,m]\rightarrow \R_{\geq 0}$ is a
concave function, and thus absolutely continuous and differentiable
a.e., it follows that $x f(x):[0,m]\rightarrow \R_{\geq 0}$ is also
absolutely continuous and differentiable a.e. Thus it follows from the
Fundamental Theorem of Calculus that
$$f(m)=\frac{1}{m}\int_{0}^m (x f(x))'\;dx=\frac{1}{m}\int_{0}^m x
f'(x)\;dx+\frac{1}{m}\int_{0}^m f(x)\;dx=\frac{1}{m}\int_{0}^m x
f'(x)\;dx+\frac{|A|}{m}.$$  Hence we may rewrite $\Delta$ as
\begin{eqnarray}
\Delta&=&\frac{n}{m}\int_0^m y f'(y)\;dy-\frac{m}{n}\int_0^n
g(x)\;dx
+m g(0)\nn\\
&=&\frac{m}{n}\int_0^n \left(x
f'_+(\frac{m}{n}x)-g(x)\right)\;dx+m g(0)\label{stickstep},
\end{eqnarray} where we have used that $f'_+(x)=f'(x)$ a.e. and
applied the change of variables $y\mapsto \frac{m}{n}x$.

Since  $f'_+(x)\geq g'_+(y)+\epsilon$ for all $x\in [0,m)$ and
$y\in [0,n)$, it follows that
\be\label{fun0}xf'_+(\frac{m}{n}x)\geq x g'_+(0)+x\epsilon,\ee for
all $x\in [0,n)$. Since $g$ is concave, $\frac{g(x)-g(0)}{x}$ is a
decreasing function of $x$, whence
\be\label{fun1}g'_+(0)=\underset{\lambda>0}{\lim_{\lambda\to
0}}\frac{g(\lambda)-g(0)}{\lambda}\geq \frac{g(x)-g(0)}{x},\ee for
all $x\in [0,n)$. Applying the estimates \eqref{fun0} and
\eqref{fun1} to \eqref{stickstep}, we obtain \ber\nn\Delta&\geq& m
g(0)+\frac{m}{n}\int_0^n (x
g'_+(0)+x\epsilon-g(x))\;dx\\
&\geq& m g(0)+\frac{m}{n}\int_0^n
(-g(0)+x\epsilon)\;dx=\frac{mn}{2}\epsilon, \eer
 completing the proof.
\end{proof}

In order to complete the proof of Theorem \ref{thm:invcont-graphs},
we need the following lemma.

\begin{lemma}\label{lem:partition}
Let $A$ and $B$ be the convex sets in the plane $\R^2$ defined by
\eqref{p-c1} and \eqref{p-c2}, respectively.

Let $\Pi_{m,k}=\{x_0=0<x_1<\cdots <x_k=m\}$ and
$\Pi_{n,k}=\{x_0'=0<x_1'<\cdots <x_k'=n\}$ be partitions of $[0,m]$ and $[0,n]$ into $k$
equal parts,  and let
\begin{eqnarray}
A_i&=& \{ (x,y)\in \R^2: x_{i-1}\le x\le x_i, ~0\le y\le f(x) \}, ~i=1,\ldots ,k, \nn\\
B_i&=& \{ (x,y)\in \R^2: x'_{i-1}\le x\le x'_i, ~0\le y\le g(x)
\}, ~i=1,\ldots ,k. \nn
\end{eqnarray}
If $(A,B)$ is an extremal pair for Bonnesen's inequality,
i.e. $|A+B|=\left(\frac{|A|}{m}+\frac{|B|}{n}\right)(m+n),$ then
each of $(A_1,B_1),...,(A_k,B_k)$ is also an extremal pair.
\end{lemma}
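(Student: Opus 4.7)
The plan is to apply Bonnesen's inequality \eqref{eq:bm2ibis} separately to each pair $(A_i,B_i)$ and then exploit the fact that the sumsets $A_i+B_i$ occupy essentially disjoint vertical strips inside $A+B$ to obtain a matching reverse inequality. Once both bounds are in hand, equality in the hypothesis forces equality in every individual piece.

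First I would observe that each $A_i$ and $B_i$ is a convex body with $|\pi(A_i)|=m/k$ and $|\pi(B_i)|=n/k$, so Bonnesen's inequality applied to $(A_i,B_i)$ gives
$$|A_i+B_i|\ \geq\ \left(\frac{|A_i|}{m/k}+\frac{|B_i|}{n/k}\right)\!\left(\tfrac{m}{k}+\tfrac{n}{k}\right)\ =\ \left(\frac{|A_i|}{m}+\frac{|B_i|}{n}\right)(m+n).$$
Summing over $i=1,\dots,k$ and using $\sum_i|A_i|=|A|$ and $\sum_i|B_i|=|B|$, together with the extremality of $(A,B)$, yields
$$\sum_{i=1}^{k}|A_i+B_i|\ \geq\ \left(\frac{|A|}{m}+\frac{|B|}{n}\right)(m+n)\ =\ |A+B|.$$

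Next I would establish the reverse inequality $\sum_i|A_i+B_i|\le |A+B|$. Each $A_i+B_i$ is contained in $A+B$, and $\pi(A_i+B_i)=\pi(A_i)+\pi(B_i)\subseteq\bigl[\tfrac{(i-1)(m+n)}{k},\tfrac{i(m+n)}{k}\bigr]$. Therefore the sets $A_i+B_i$ and $A_j+B_j$ for $i\neq j$ meet at most along a single vertical line, which has two-dimensional Lebesgue measure zero. Consequently the measures add:
$$\sum_{i=1}^{k}|A_i+B_i|\ =\ \Bigl|\bigcup_{i=1}^{k}(A_i+B_i)\Bigr|\ \leq\ |A+B|.$$
Chaining this with the previous inequality forces equality throughout, in particular
$$|A_i+B_i|=\left(\frac{|A_i|}{m/k}+\frac{|B_i|}{n/k}\right)\!\left(\tfrac{m}{k}+\tfrac{n}{k}\right)\quad\text{for every }i,$$
which is precisely the assertion that each $(A_i,B_i)$ is an extremal pair.

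The only mild obstacle is the book-keeping verification that $A_i$ and $B_i$ qualify as convex bodies so that Bonnesen's inequality genuinely applies, and that the overlaps between the strips $\pi(A_i+B_i)$ really are negligible. Both are immediate: the convexity of $A_i,B_i$ is inherited from intersecting the convex bodies $A,B$ with a vertical strip, positivity of $f,g$ on the interiors gives nonempty interior, and the strips $[\tfrac{(i-1)(m+n)}{k},\tfrac{i(m+n)}{k}]$ tile $[0,m+n]$ with only endpoint overlaps. No deeper analytic input is needed beyond the subadditivity of Lebesgue measure and the previously established Bonnesen bound.
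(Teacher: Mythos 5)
Your proof is correct and follows essentially the same route as the paper: apply Bonnesen's inequality to each pair $(A_i,B_i)$, note that the sumsets $A_i+B_i$ lie in $A+B$ and pairwise overlap only in sets of measure zero (since their projections tile $[0,m+n]$ with endpoint overlaps), and let the hypothesized equality squeeze every intermediate inequality into an equality. The extra book-keeping you supply (convexity of the slices, the strip computation) is a harmless elaboration of what the paper leaves implicit.
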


\begin{proof} Observe that $\cup_{i=1}^k (A_i+B_i)\subseteq A+B$ and that
$|(A_i+B_i)\cap (A_j+B_j)|=0$ for
$i\neq j$. It follows that
\begin{eqnarray*}
    |A+B|&\ge&\sum_{i=1}^k|A_i+B_i|\nn\\
    &\ge &
    \sum_{i=1}^k\left(\frac{|A_i|}{x_i-x_{i-1}}+\frac{|B_i|}{x'_i-x'_{i-1}}\right)
    \left((x_i-x_{i-1})+(x'_i-x'_{i-1})\right)\\
    &=&\sum_{i=1}^k
    \left(\frac{|A_i|}{m/k}+\frac{|B_i|}{n/k}\right)(m/k+n/k)=\left(\frac{|A|}{m}+\frac{|B|}{n}\right)(m+n),
\end{eqnarray*}
where we have applied Bonnesen inequality \eqref{eq:bm2ibis} for
the second inequality. Equality
$|A+B|=\left(\frac{|A|}{m}+\frac{|B|}{n}\right)(m+n)$ implies that
$|A_i+B_i|=\left(\frac{|A_i|}{m/k}+\frac{|B_i|}{n/k}\right)(m/k+n/k),$
for every $1 \le i \le k$. Lemma \ref{lem:partition} follows.
\end{proof}

\begin{proof}[Proof of Theorem \ref{thm:invcont-graphs}.] The sufficiency
of the condition on $f$ is Lemma \ref{lem:equality}. Let us show
the necessity.

Set $\lambda=m/n$. If $f'(x)=g'(\lambda^{-1}x)$ for a.e. $x\in (0,m)$, then, by the Fundamental
Theorem of Calculus, we would have $f(x)=\lambda g(\lambda^{-1}x)+c$ for
some constant $c$, whose value is easily computed to be $C$, and the result
follows.

Thus we may assume that there is an interior point $x_0\in (m,n)$
where both functions are differentiable and, by exchanging the roles
of $f$ and $g$ if necessary,   $f'(x_0)\ge g'(x_0)+ \epsilon'$ for
some $\epsilon'>0$. By continuity of the derivatives, there is an
$\epsilon>0$ and a $\delta>0$  such that $f'(x)\ge
g'(\lambda^{-1}x)+\epsilon$ a.e. in the interval $
[x_0-\delta,x_0+\delta]$. Choose $k$ sufficiently large so that, in the partition
$\{x_0=0<x_1<\cdots <x_k=m\}$ of  $[0,m]$ into $k$ equal parts, one
of the parts, say $[x_{i-1},x_i]$, is contained in the interval
$[x_0-\delta,x_0+\delta]$. Then $\{\lambda^{-1} x_0=0<\lambda^{-1} x_1<\cdots
<\lambda^{-1} x_k=n\}$ is  a partition of $[0,n]$ whose $i$-th part
$[\lambda^{-1} x_{i-1},\lambda^{-1} x_i]$  is contained in the interval
$[\lambda^{-1} (x_0- \delta), \lambda^{-1}(x_0+\delta)]$.

 By assertion (b) of
Lemma \ref{lem:necessity1}, the restrictions $A_i=A\cap
([x_{i-1},x_i]\times \R)$ and $B_i=B\cap ([\lambda x_{i-1},\lambda
x_i]\times \R)$ do not satisfy equality in Bonnesen's inequality. Hence, by
Lemma \ref{lem:partition}, the original sets $A$ and $B$ are not an
extremal pair. This completes the proof of Theorem
\ref{thm:invcont-graphs}.\end{proof}

We can now complete the proof of Theorem \ref{thm:invcont-general}.

\begin{proof}[Proof of Theorem \ref{thm:invcont-general}] Assume that $A$
and $B$ are two  convex bodies defined by (\ref{eq:A}) and
(\ref{eq:B}), respectively. Without loss of generality, we may assume
that
\begin{eqnarray}
&&u_A(x) \le 0 \mbox{ for } 0 \le x \le m \label{eq:negative-A} \und\\
&&u_B(x) \le 0 \mbox{ for } 0 \le x \le n \label{eq:negative-B}.
\end{eqnarray}
In view of Lemma \ref{lem:stretching} and the fact that homothetic convex bodies attain equality in the Brunn-Minkowski bound, and thus also in Bonnesen's bound, we see that the sets described by Theorem \ref{thm:invcont-general} attain the equality in Bonnesen's bound. It remains to show these are the only possibilities.

Choose $h >0$ large enough such that
\begin{eqnarray}
&&w_A(x)=v_A(x)+h \ge 0, \mbox{ for } 0 \le x \le m
\label{eq:positive-A},
\end{eqnarray}
and
\begin{eqnarray}
&&w_B(x)=v_B(x)+h \ge 0, \mbox{ for } 0 \le x \le n.
\label{eq:positive-B}
\end{eqnarray}
Let
\begin{eqnarray}
&& A_h=\{ (x,y)\in \R^2: 0\le x\le m, ~~u_A(x)\le y\le w_A(x)\},\nonumber\\
&& B_h=\{ (x,y)\in \R^2: 0\le x\le n, ~~u_B(x)\le y\le
w_B(x)\}.\nonumber
\end{eqnarray}
It follows from  (\ref{eq:negative-A}), (\ref{eq:negative-B}),
(\ref{eq:positive-A}), (\ref{eq:positive-B})   that we may split
each of the convex sets $A_h$ and $B_h$ as follows:
\begin{eqnarray}
&& A_h=A_h^+ \cup A_h^-,
\nonumber\\
&& A_h^-=\{ (x,y)\in \R^2: 0\le x\le m, ~~u_A(x)\le y\le 0\},\label{eq:Ahminus}\\
&& A_h^+=\{ (x,y)\in \R^2: 0\le x\le m, ~~0     \le y\le
w_A(x)\},\label{eq:Ahplus}
\end{eqnarray}
and
\begin{eqnarray}
&& B_h=B_h^+ \cup B_h^-,
\nonumber\\
&& B_h^-=\{ (x,y)\in \R^2: 0\le x\le n, ~~u_B(x)\le y\le 0\},\label{eq:Bhminus}\\
&& B_h^+=\{ (x,y)\in \R^2: 0\le x\le n, ~~0     \le y\le
w_B(x)\}.\label{eq:Bhplus}
\end{eqnarray}
We observe  that
\begin{eqnarray}
(A_h^+ + B_h^+)\cup (A_h^- + B_h^-)\subseteq A_h+B_h. \label{eq:++--}
\end{eqnarray}

By Lemma \ref{lem:stretching},  equality
\begin{equation}\label{eq:E}
|A+B|=(\frac{|A|}{m}+\frac{|B|}{n})(m+n)
\end{equation}
and equality
\begin{equation}\label{eq:F}
|A_h+B_h|=(\frac{|A_h|}{m}+\frac{|B_h|}{n})(m+n)
\end{equation}
are equivalent.

In view of  (\ref{eq:++--}), we have
\begin{eqnarray}
|A_h+B_h|\geq |A_h^++B_h^+|+|A_h^-+B_h^-|,\nonumber
\end{eqnarray}
and Bonnesen's inequality (\ref{eq:bm2}) gives
\begin{eqnarray}
|A_h+B_h|&\geq &|A_h^++B_h^+|+|A_h^-+B_h^-|\nonumber\\
&\ge&(\frac{|A_h^+|}{m}+\frac{|B_h^+|}{n})(m+n)+(\frac{|A_h^-|}{m}+\frac{|B_h^-|}{n})(m+n)\nonumber\\
&=&(\frac{|A_h|}{m}+\frac{|B_h|}{n})(m+n).\nonumber
\end{eqnarray}
Therefore (\ref{eq:F}) implies
\be
|A_h^++B_h^+|=(\frac{|A_h^+|}{m}+\frac{|B_h^+|}{n})(m+n) \mbox{ and
}
|A_h^-+B_h^-|=(\frac{|A_h^-|}{m}+\frac{|B_h^-|}{n})(m+n).\label{eq:new}
\ee
Consequently, $ (A_h^+, B_h^+)$ and $ (A_h^-, B_h^-)$ are extremal
pairs for Bonnesen's inequality. Moreover, the convex sets $A_h^+$ and
$B_h^+$ are included in the half plane given by $y \ge 0$, \ $A_h^-$ and
$B_h^-$ are included in the half plane given by $y \le 0$, and these sets are defined
by (\ref{eq:Ahminus}), (\ref{eq:Ahplus}), (\ref{eq:Bhminus}) and
(\ref{eq:Bhplus}). Thus we may apply Theorem \ref{thm:invcont-graphs} and
 conclude that (\ref{eq:new}) is true if and only if there are
$c', d\in \R$ such that, for every $0 \le x \le n$, we have
$$w_B(x)=\frac{n}{m}w_A(\frac{m}{n} x)+c' \und v_B(x)=\frac{n}{m}v_A(\frac{m}{n}
x)+d.$$ In view of the definition of $w_A$ and $w_B$, this shows that the curves $u_A$ and $u_B$,
as well as the curves $v_A$ and $v_B$, are homothetic.

Let $$\alpha=\inf_{x\in [0,m]}\{v_A(x)-u_A(x)\}\geq 0\und\beta=\inf_{x\in [0,n]}\{v_B(x)-u_B(x)\}\geq 0.$$
Let $x_0\in [0,m]$ and $y_0\in [0,n]$ be points
achieving these minima.
Let $A'\subseteq \R^2$ be the subset with $\pi(A')=\pi(A)=[0,m]$ defined by
 $$A'=\{(x, y) \in \R^2\mid  x\in [0,m],\; u_A(x)\le y \le v_A(x)-\alpha\}.$$
Then $u_{A'}=u_A$ and $v_{A'}=v_A-\alpha$ (in view of the
definition of $\alpha$) so that $A'$ is the maximal vertical
`compression' of $A$. In particular, $A$ is a vertical stretching
of $A'$ of amount $\alpha$. Likewise, let $B'\subseteq \R^2$ be
the subset with $\pi(B')=\pi(B)=[0,n]$ defined by  $$B'=\{(x, y)
\in \R^2\mid  x\in [0,n],\; u_B(x)\le y \le v_B(x)-\beta\}.$$
Since the  curves $v_A$ and $v_B$ are both homothetic as well as
the curves $u_A$ and $u_B$, it follows that we can take $x_0=
\frac{m}{n} y_0$ and, moreover, $A'$ and $B'$ will then be
homothetic convex bodies (note $v_A(x)$ and $u_A(x)$ intersect
over the point $x=x_0$ as do $v_B(\frac{m}{n}x)$ and
$u_B(\frac{m}{n}x)$, so that these curves fully determine the
convex body $A'$ as well as the dilation $\frac{m}{n}\cdot B'$),
which completes the proof.
\end{proof}

\end{document}